\def\currenttime{%
	\minute\time
	\hour\minute
	\divide\hour60
	\the\hour:\multiply\hour60\advance\minute-\hour\the\minute}
\def\draftnote{{\it \today \quad  \currenttime \hfill  tex-file :   \jobname}}
\newtheorem{Theorem}{Theorem}[section]
\newtheorem{Definition}{Definition}[section]
\newtheorem{Proposition}{Proposition}[section]
\newtheorem{Lemma}{Lemma}[section]
\newtheorem{Corollary}{Corollary}[section]
\newtheorem{Remark}{Remark}[section]
\newtheorem{Hyp.}{Hyp.}[section]
\begin{document}

\title[]{Parameter determination for Energy Balance Models with Memory}

\author{P. Cannarsa} 
\address{Dipartimento di Matematica, Universit\`a di Roma "Tor Vergata",
Via della Ricerca Scientifica, 00133 Roma, Italy}
\email{cannarsa@mat.uniroma2.it}

\author{M. Malfitana} 
\address{Dipartimento di Matematica, Universit\`a di Roma "Tor Vergata",
Via della Ricerca Scientifica, 00133 Roma, Italy}
\email{martina.malfitana@gmail.com}

\author{P. Martinez} 
\address{Institut de Math\'ematiques de Toulouse; UMR 5219, Universit\'e de Toulouse; CNRS \\ 
UPS IMT F-31062 Toulouse Cedex 9, France} \email{patrick.martinez@math.univ-toulouse.fr}

\subjclass{35K65, 35R30}
\keywords{Energy balance model, degenerate parabolic equation, memory effect, inverse problem}
\thanks{This research was partly supported by Istituto Nazionale di Alta Matematica through the European Research Group GDRE CONEDP. The authors acknowledge the MIUR Excellence Department Project awarded to the Department of Mathematics, University of Rome Tor Vergata, CUP E83C18000100006.}

\begin{abstract}
In this paper, we study two Energy Balance Models with Memory arising in climatology, which consist in a 1D degenerate nonlinear parabolic equation involving a memory term, and possibly a set-valued reaction term (of Sellers type and of Budyko type, in the usual terminology). We provide
existence and regularity results, and obtain uniqueness and stability estimates that are useful for the determination of the insolation function in Sellers' model with memory.
\end{abstract}
\maketitle


\section{Introduction}

\subsection{Energy balance models and the problems we consider} \hfill

We are interested in a problem arising in climatology, coming more specifically from the classical Energy Balance models introduced independently by Budyko \cite{Budyko} and Sellers \cite{sellers}.
These models, which describe the evolution of temperature as the effect of the balance between the amount of energy received from the Sun and radiated from the Earth,  were developed in order to understand the past and future climate and its sensitivity to some relevant parameters on large time scales (centuries). After averaging the surface temperature over longitude, they take the form of the following one-dimensional nonlinear parabolic equation with degenerate diffusion:
\begin{equation*}
	u_t  - (\rho_0 (1-x^2) u_x)_x = R_a - R_e
	\end{equation*}
where
 
\begin{itemize}
\item $u(t,x)$ is the surface temperature averaged over longitude, 
\item the space variable $x = \sin \phi \in (-1,1)$ (here $\phi$ denotes the latitude),
\item $R_a$ represents the fraction of solar energy absorbed by the Earth,
\item $R_e$ represents the energy emitted by the Earth,
\item $\rho_0$ is a positive parameter.
\end{itemize}
A crucial role in the analysis will be played  by  the absorbed energy $R_a$, which is a fraction of the incoming solar flux $Q(t,x)$, that is,
\begin{equation*}
R_a = Q(t,x)\, \beta ,
\end{equation*}
where $\beta$ is the coalbedo function. 
Additionally, as is customary in seasonally averaged models, we will assume that
\begin{equation*}
Q(t,x) = r(t) q(x),
\end{equation*}
where $r$ is positive and $q$ is the so-called "insolation function".

It was noted (see Bhattacharya-Ghil-Vulis \cite{Bhat1982}) that, in order to take into account the long response times that  cryosphere exhibits (for instance, the expansion or retreat of huge continental ice sheets occurs with response times of thousands of years), it is useful to let the coalbedo function depend not only on $u$, but also on the history function, which can be represented by the integral term
	\begin{equation*}
	H(t,x,u):= \int_{-\tau}^{0} k(s,x)u(t+s,x)ds \qquad \forall t>0, x\in I ,
	\end{equation*}
where $k$ is the memory kernel (and  $\tau \sim 10^4$ years, in real problems). As in Roques-Checkroun-Cristofol-Soubeyrand-Ghil \cite{Roques20140349}, we will assume a nonlinear response to memory in the form 
\begin{equation*}
f(H(t,x,u)) .
\end{equation*}
Hence, we are interested in the following Energy Balance Model with Memory (EBBM) problem,
set in the space domain $I:=(-1,1)$:
	\begin{equation*}
	\begin{cases}
	u_t - (\rho _0 (1-x^2) u_x)_x = Q(t,x)\beta (u) + f(H(t,x,u)) - R_e (u), \quad t>0,  x \in I,\\
	\rho _0 (1-x^2) u_x (t,x) = 0, \quad t>0, x \in \partial I\\
	u(s,x) = u_0(s,x), \quad s \in [-\tau,0] .
	\end{cases}
	\end{equation*}
Concerning the function $\beta$, we will assume, as it is classical for such problems, that
\begin{itemize}
\item either $\beta$ is positive and at least Lipschitz continuous (the classical assumption for Sellers type models),
\item or $\beta$ is positive, monotone and discontinuous (the classical assumption for Budyko type models).
\end{itemize}


\subsection{Relation to literature and presentation of our main results} \hfill
\label{sec-plap}

The mathematical analysis of quasilinear EBMM problems of the form
$$ 	\partial_tu - \text{div } (\rho (x) \vert \nabla u \vert ^{p-2} \nabla u) = f(t,x,u,H(t,x,u))
$$
has been the subject of many deep works for a long time. Questions such as well-posedness, uniqueness, asymptotic behavior, existence of periodic solutions, bifurcation, free boundary, numerical approximation were investigated 
for:
\begin{itemize}
\item 1-D models without memory by Ghil in the seminal paper  \cite{Ghil76},
\item 0-D models  in Fraedrich~\cite{Fraedrich78, Fraedrich79}, 
\item 1-D models with memory in Bhattacharya-Ghil-Vulis~\cite{Bhat1982} and Diaz~\cite{Diaz2, Diaz1997}, 
\item 2-D models (on a manifold without boundary, typically representing the Earth's surface) in Hetzer~\cite{Hetzer96globalexistence}, Diaz-Tello~\cite{Diaz-Tello}, Diaz-Hetzer~\cite{Diaz-Hetzer}, Hetzer~\cite{Hetzer1}, Diaz~\cite{DIAZ20062053}, Diaz-Hetzer-Tello~\cite{DHT}, and Hetzer~\cite{hetzer2011global}.
\end{itemize}

In this paper, we are interested in the following inverse problem: is it possible to recover the insolation function (which is a part of the incoming solar flux in $Q(t,x)$) from measurements of the solution, for our EBBM model? Our motivation comes from 
the fact that, with suitable tuning of their parameters, EBMs have shown to mimic the observed zonal temperatures for the observed present climate \cite{north}, and can be used to estimate the temporal response patterns to various forcing scenarios, which is of interest in particular in the detection of climate change. Unfortunately, in practice, the model coefficients cannot be measured directly, but are quantified through the measures of the solution \cite{Roques20140349}. Hence, results proving that measuring the solution in some specific (small) part of the space and time domain is sufficient to recover a specified coefficient are of practical interest.

Several earlier papers are related to this question, in particular the ones that we recall below.
\begin{itemize} 
\item In Tort-Vancostenoble~\cite{TORT2012683}, the question of determining the insolation function was studied for a 1D Sellers type model without memory, combining:
\begin{itemize} 
\item the method introduced  by Imanuvilov-Yamamoto in the seminal paper~\cite{Imanuvilov} (based on the use of Carleman estimates to obtain stability results for the determination of source terms for parabolic equations), 
\item the Carleman estimates from Cannarsa-Martinez-Vancostenoble~\cite{sicon2008} for degenerate parabolic equations,
\item suitable maximum principles to deal with  nonlinear terms.
\end{itemize}
In the same paper, the authors proved stability estimates measuring the solution on an open subset of the space domain. Similar questions were studied in Martinez-Tort-Vancostenoble~\cite{Sellers-PM-JT-JV} on manifolds without boundary.
\item In Roques-Checkroun-Cristofol-Soubeyrand-Ghil~\cite{Roques20140349}, the  question of determining the insolation function was studied for a 1D Sellers type model including memory effects, but for a nondegerate diffusion. These authors extended a method due to Roques-Cristofol~\cite{Roques, Cristofol} which, based on analyticity,  allows for measurements only at a single point $x_0$ (under a rather strong assumption on the kernel appearing in the history function).
\end{itemize}

In this paper we study, first, the 1D Sellers type problem with degenerate diffusion and memory effects.  More precisely, we  prove regularity results  and use them to study the determination of the insolation function, obtaining
\begin{itemize}
\item a uniqueness result, under pointwise observation,
\item a Lipschitz stability result, under localized observation,
\end{itemize}
in the spirit of the aboce mentioned references.
Then, we address 1D Budyko type problems with degenerate diffusion and memory effects, for which we obtain precise existence results as in Diaz-Hetzer~\cite{Diaz-Hetzer}. For this, we need to  regularize the coalbedo and use the existence results obtained in the first part of the paper.

Let us note that our existence results for Sellers and Budyko type problems can be regarded as a consequence of the ones by Diaz-Hetzer~\cite{Diaz-Hetzer} for manifolds. However, here we give a direct proof of such results in zonally averaged 1D settings. For this reason,  we need to use the properties of degenerate diffusion operators.

Finally, to give a more complete overview of the literature on these questions, let us also mention the papers by:
\begin{itemize}
\item Pandolfi~\cite{Pandolfi}, for a similar question but on a different equation (the history function depending on the second-order derivative in space), 
\item Guerrero-Imanuvilov~\cite{Guerrero-Imanuvilov}, that proves that null controllability does {\it not} hold for the linear heat equation perturbed by $\int _0 ^t u$ (hence with a memory term which takes into account all the history from time $0$ to $t$),
\item Tao-Gao \cite{Tao-Gao}, that gives positive null controllability results for a similar heat equation  under additional assumptions on the kernel appearing in the history function (notice however  that these assumptions are incompatible with our settings as they would force the kernel to depend also $t$ and to uniformly vanish at some time $T$, which is unnatural in climate modelling).
\end{itemize}


\section{Mathematical assumptions for these climate models}

We are interested in a class of EBMM:
\begin{equation}
	\begin{cases}
	u_t - (\rho (x) u_x)_x = R_a (t,x,u,H) - R_e (t,x,u,H)), \quad t>0,  x \in I,\\
	\rho (x) u_x = 0, \quad x \in \partial I ,\\
	u(s,x) = u_0(s,x), \quad s \in [-\tau,0], x\in I ,
	\end{cases}
\label{complete eq}
\end{equation}
where $I=(-1,1)$. We are going to precise our assumptions concerning Budyko type problems and Sellers ones.

\subsection{Budyko type models with memory} \hfill

We make the following assumptions:
\begin{itemize}
\item concerning the diffusion coefficient: we assume that there exists $\rho _0 >0$ such that
	\begin{equation}
	\label{hyp-rho}
\forall x\in (-1,1), \quad	\rho (x) := \rho _0 (1-x^2);
	\end{equation}
	
\item concerning $R_a$: we assume that
	\begin{equation}
	\label{hyp-Ra}
	R_a(t,x,u,H) = Q(t,x) \beta(u) + f(H(t,x,u)), 
	\end{equation}
where
\begin{itemize}
\item $Q(t,x)$ is the incoming solar flux; we assume that $Q(t,x)=r(t)q(x)$, where $q$, the insolation function, and $r$ are such that:
		\begin{equation} \label{hyp-q-r} 
		\begin{cases}
		q \in L^\infty(I),  \\
		r \in C^1(\mathbb{R_+}) \text{ and } r,r' \in L^\infty (\Bbb R_+) ;
		\end{cases}
		\end{equation}
\item $\beta$ is the classical Budyko type coalbedo function: it is an highly variable quantity which depends on many local factors such as the cloud cover and the composition of the Earth's atmosphere, moreover it is used as an indicator for ice and snow cover; usually it is considered roughly constant for temperatures far enough from the ice-line, that is a circle of constant latitude that separates the polar ice caps from the lower ice-free latitudes; the classical Budyko type coalbedo is:

\begin{equation}
\label{hyp-Budyko}
\beta (u) = 
\begin{cases}
 a_i ,      & u < \overline{u} , \\
[a_i,a_f] , & u=\overline{u} , \\
 a_f ,      & u > \overline{u} ,
 \end{cases}
\end{equation}
where $a_i < a_f$ (and the threshold temperature $ \bar{u}:= -10^\circ $);
\item $H$ is the history function; it is assumed to be given by 
\begin{equation}
\label{def-H}
	H(t,x,u) = \int _{-\tau} ^0 k(s,x) \, u(t+s,x) \, ds
	\end{equation}
where the kernel $k$ is such that:
		\begin{equation} 
		k \in C^1([-\tau,0] \times [-1,1]; \mathbb{R})
;
		\label{history hp}
		\end{equation}
\item $f$: the nonlinearity that describes the memory effects; we assume that $f: \mathbb{R} \rightarrow \mathbb{R}$ is $C^1$ and such that
		\begin{equation} \label{hyp-f}
		\begin{cases}
		f, f' \in L^{\infty}(\mathbb{R})
		\\
		f,f' \mbox{ are } L-\mbox{Lipschitz} ;
		\end{cases} 
		\end{equation}
\end{itemize}
\item concerning $R_e$: the classical Budyko type assumption is
\begin{equation}
\label{hyp-Re}
R_e (t,x,u,H) = a+bu ,
\end{equation}
where $a$, $b$ are constants;
\item the initial condition: since we define $H$ over a past temperature, the initial condition in such models has to be of the form
	\begin{equation}
	u(s,x) = u_0(s,x) \quad \forall s \in [-\tau,0], \quad x \in I
	\label{initial condition}
	\end{equation}
	for some $u_0(s,x)$ defined on $[-\tau,0] \times I$, for which we will precise our assumptions in our different results.
\end{itemize}

Sometimes we will only add positivity assumptions on $q$ and $r$; these assumptions are natural with respect to the model, but only useful in the inverse problems results.

\subsection{Sellers type models with memory} \hfill

The differences concern the assumptions on the coalbedo and on the emitted energy:
\begin{itemize}
\item $\beta$: in Sellers type models, we assume that 
\begin{equation}
\label{beta-Sellers}
\beta \in C^2(\mathbb{R}), \quad \beta, \beta ', \beta '' \in L^\infty (\Bbb R) 
\end{equation}
(typically, $\beta$ is $C^2$ and takes values  between the lower value for the coalbedo $ a_i $ and higher value $a_f$ (even if there is a sharp transition between these two values around the threshold temperature $ \bar{u}$)).

\item $R_e$ is assumed to follow a Stefan-Boltzmann type law (assuming that the Earth radiates as a black body):
	\begin{equation}
	R_e= \varepsilon(u)|u|^3u ,
	\label{E out s}
	\end{equation}
where the function $\varepsilon$ represents the emissivity; we assume that 
\begin{equation}
\label{epsilon-Sellers}
\begin{cases} 
\varepsilon \in C^1(\mathbb{R}) \text{ and } \varepsilon, \varepsilon ' \in L^\infty (\Bbb R) , \\
\exists \varepsilon _1 >0, \text{ s.t. } \forall u, \quad \varepsilon (u) \geq \varepsilon _1 >0 .
\end{cases}
\end{equation}


\end{itemize}

\subsection{Plan of the paper} \hfill

\begin{itemize}

\item section \ref{s-results-1D} contains the statement of our results concerning Sellers type models:
\begin{itemize}
\item concerning well-posedness questions: see Theorem \ref{thm-Sellers global existence} in section \ref{subsec-wpSellers}; 
\item concerning inverse problems questions: 
\begin{itemize}
\item Theorem \ref{thm-inverse}: uniqueness of the insolation function under pointwise measurements (in section \ref{subsec-inv-pt}),
\item Theorem \ref{thm-inv-stab}: Lipschitz stability under localized measurements (in section \ref{subsec-inv-loc});
\end{itemize}
\end{itemize}

\item section \ref{sec-b-results-1D} contains the statement of our well-posedness result concerning Budyko type models, see Theorem \ref{thm-Budyko global existence};

\item section \ref{sec-open} is devoted to mention some open questions;

\item section \ref{sec-proof-wp-S} contains the proof of Theorem \ref{thm-Sellers global existence};

\item section \ref{inverse problem} contains the proof of Theorem \ref{thm-inverse};

\item section \ref{inverse problem2} contains the proof of Theorem \ref{thm-inv-stab};

\item section \ref{sec-proof-wp-B} contains the proof of Theorem \ref{thm-Budyko global existence}.

\end{itemize}


\section{Main results for the Sellers type model} 
\label{s-results-1D}

First we show the local and global existence of a regular solution to the following problem:
\begin{equation}
\label{stab-eq-u}
	\begin{cases}
	u_t - (\rho(x) u_x)_x = r(t)q(x)\beta (u) -\varepsilon (u) \vert u \vert ^3 u + f(H), \quad t>0,  x \in I,\\
	\rho (x) u_x = 0, \quad t>0, x \in \partial I ,\\
	u(s,x) = u_0(s,x), \quad s \in [-\tau,0], x\in I ,
	\end{cases}
\end{equation}
In this section, we assume \eqref{hyp-rho}, \eqref{hyp-q-r}, \eqref{def-H}-\eqref{hyp-f}, \eqref{initial condition}-\eqref{epsilon-Sellers}. 

In the following, we recast \eqref{stab-eq-u} into a semilinear evolution equation governed by an analytic semigroup.

\subsection{Functional framework} \hfill

	Since the diffusion coefficient has a degeneracy at the boundary, it is necessary to introduce the weighted Sobolev space $V$ below in order to deal the well-posedness of problem \eqref{complete eq}. To know more about this functional framework for one-dimensional degenerate parabolic equations, the reader may also refer to \cite{Campiti1998, cannarsa2005, Cannarsa2008, TORT2012683}.
	
	\begin{equation*}
	V:=\{w \in L^2(I): w \in AC_{loc}(I),  \sqrt{\rho}w_x \in L^2(I)\}
	\end{equation*}
	endowed with the inner product
	\begin{equation*}
	(u,v)_V:=(u,v)_{L^2(I)}+(\sqrt{\rho}u_x,\sqrt{\rho}v_x)_{L^2(I)} \quad \forall u,v \in V
	\end{equation*}
	and then with the associated norm
	\begin{equation*}
	||u||_V:= \sqrt{(u,u)_V} = ||u||_{L^2(I)} + ||\sqrt{\rho}u_x||_{L^2(I)} \quad \forall u \in V .
	\end{equation*}
	
	We recall that $\rho(x) = \rho_0(1-x^2)$ for all $x \in I$ by definition.
	Let us remark that $(V,(\cdot,\cdot)_V)$ is a Hilbert space and that $V \subset H^1_{loc}(I)$ and $V \subset L^2(I) \subset V^*$.
Moreover, 
\begin{itemize}
\item the space $C_0^\infty(I)$ is dense in V, in particular $V$ is dense in $L^2(I)$ (\cite{Campiti1998}, Lemma 2.6));
\item for all $p \in [1,+\infty)$, the inclusion 
\begin{equation}
\label{VLp}
V \hookrightarrow L^p(I)
\end{equation} holds and is continuous; moreover, the inclusion $V \hookrightarrow L^2(I)$ is compact (\cite{Diaz1997}, Lemma 1).
\end{itemize}
	
	In order to obtain our semilinear evolution equation let us define the operator $A:D(A) \subset L^2(I) \rightarrow L^2(I)$ in the following way:
	\begin{equation}
		\begin{cases}
			D(A):=\{u \in V:\rho u_x \in H^1(I) \}\\
			Au:= (\rho u_x)_x \quad u \in D(A)
		\end{cases}
		\label{operator}
	\end{equation}
	
(Note that the boundary condition appearing in \eqref{complete eq} is contained in the definition of the unbounded operator $A$ given in \eqref{operator}: indeed, if $u\in D(A)$, then $\rho u_x \in H^1(I)$, hence $\rho u_x \in C^0(\bar{I})$,
which implies that $\rho u_x \to L$ as $x\to 1^-$; but if $L\neq 0$, then $\sqrt{\rho} u_x \notin L^2(I) $, therefore $L = 0$ and $(\rho u_x)(1) = 0$.
And the case $x = -1$ is analogous.)
	
We denote $\mathcal L (L^2(I))$ the space of linear continuous applications from $L^2(I)$ into itself, endowed with the natural norm $ |||\quad \cdot \quad |||_{\mathcal L (L^2(I))}$. We recall the following
	\begin{Theorem} (\cite{Campiti1998, bensoussan1992})
\cite{Campiti1998}
	\label{thm-Aanal}
		(A,D(A)) is a self-adjoint operator and it is the infinitesimal generator of an analytic and compact semigroup $\{e^{tA}\}_{t \ge 0}$ in $L^2(I)$ that satisfies
		$$ ||| e^{tA} |||_{\mathcal L (L^2(I))} \leq 1. $$
	\end{Theorem}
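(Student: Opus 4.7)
The plan is to realize $-A$ as the operator associated with a closed, symmetric, non-negative, densely defined bilinear form on $L^2(I)$, and then read off all the claims from the general Friedrichs/Lumer--Phillips theory. Define
\begin{equation*}
a(u,v) := \int_{I} \rho(x)\, u_x(x)\, v_x(x)\, dx, \qquad u,v \in V.
\end{equation*}
The Cauchy--Schwarz inequality gives $|a(u,v)| \le \|\sqrt{\rho}u_x\|_{L^2}\|\sqrt{\rho}v_x\|_{L^2} \le \|u\|_V \|v\|_V$, so $a$ is continuous on $V\times V$. It is clearly symmetric and non-negative, and by construction $a(u,u) + \|u\|_{L^2}^2 = \|u\|_V^2$, which expresses coercivity of $a+\mathrm{Id}$ on $V$. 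Combined with the already stated facts that $V$ is a Hilbert space, $V$ is dense in $L^2(I)$, and the embedding $V\hookrightarrow L^2(I)$ is continuous, the form $a$ is closed, symmetric, non-negative and densely defined on $L^2(I)$.

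Next I would identify the operator $\tilde A$ associated with $a$ (in the sense that $D(\tilde A)=\{u\in V \,:\, \exists g\in L^2(I),\ a(u,v)=(g,v)_{L^2}\ \forall v\in V\}$ and $\tilde A u = g$) with $-A$. For $u\in D(\tilde A)$, testing $a(u,v)=(g,v)_{L^2}$ against $v\in C_c^\infty(I)$ and integrating by parts shows $(\rho u_x)_x = -g$ in $\mathcal{D}'(I)$, hence $\rho u_x \in H^1(I)$ and $u\in D(A)$ with $\tilde A u = -A u$. Conversely, if $u\in D(A)$, then $\rho u_x \in H^1(I) \hookrightarrow C^0(\bar I)$; as explained in the parenthetical following \eqref{operator}, the integrability condition $\sqrt{\rho}u_x \in L^2(I)$ forces $\rho u_x$ to vanish at $\pm 1$. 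Hence for $v\in V$ a classical integration by parts (justified by approximating $v$ by $C^1_c(I)$ functions using the density stated after the definition of $V$) gives
\begin{equation*}
a(u,v) = \bigl[\rho u_x v\bigr]_{-1}^{1} - \int_I (\rho u_x)_x v\, dx = -\int_I (\rho u_x)_x v\, dx = (-Au, v)_{L^2},
\end{equation*}
so $u\in D(\tilde A)$ and $\tilde A u = -Au$. Thus $-A=\tilde A$ is self-adjoint and non-negative on $L^2(I)$, so $A$ itself is self-adjoint and non-positive.

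From here the remaining claims follow from standard abstract theory. A non-positive self-adjoint operator generates a strongly continuous semigroup $\{e^{tA}\}_{t\ge 0}$ of contractions on $L^2(I)$, so $|||e^{tA}|||_{\mathcal L(L^2(I))}\le 1$; analyticity on the right half-plane follows either from the spectral theorem or from the fact that $A$ is the negative of a sectorial operator associated with a symmetric form. Finally, for $\lambda>0$ the resolvent $(\lambda - A)^{-1}$ maps $L^2(I)$ into $D(A)\subset V$ continuously, and since $V\hookrightarrow L^2(I)$ is compact, $(\lambda-A)^{-1}$ is a compact operator on $L^2(I)$; this yields compactness of $e^{tA}$ for every $t>0$ by a classical result on analytic semigroups with compact resolvent. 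The only delicate point, and the place where the degeneracy of $\rho$ at $\pm 1$ really enters, is the boundary analysis identifying $D(\tilde A)$ with $D(A)$, since it is there that the natural conormal boundary conditions $(\rho u_x)(\pm 1)=0$ have to be extracted from the mere membership $u\in V$; everything else reduces to invoking the abstract machinery.
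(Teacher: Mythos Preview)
Your argument is correct. The one cosmetic wrinkle is the displayed boundary term $[\rho u_x v]_{-1}^{1}$: for a general $v\in V$ this product need not even make sense at $\pm 1$, so the identity $a(u,v)=(-Au,v)_{L^2}$ should be obtained directly from the density of $C_c^\infty(I)$ in $V$ (which you do invoke) rather than by evaluating that bracket. With that reading, every step stands.

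Your route, however, is genuinely different from the paper's. The paper does not argue from scratch: it cites Theorem~2.8 of \cite{Campiti1998} for self-adjointness and the contraction property, Theorem~2.12 of \cite{bensoussan1992} (or \cite{tanabe1960}) for analyticity, and deduces compactness from Theorem~3.3 of \cite{Campiti1998} after checking the explicit criterion that $x\mapsto \int_0^x \frac{ds}{\rho(s)}=\frac{1}{2\rho_0}\log\frac{1+x}{1-x}$ belongs to $L^1(I)$. By contrast, you build everything on the symmetric form $a(u,v)=\int_I\rho u_xv_x$ and the Friedrichs/Kato machinery, and you obtain compactness from the compact embedding $V\hookrightarrow L^2(I)$ (already recorded in the paper) rather than from the integrability criterion of \cite{Campiti1998}. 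Your approach is more self-contained and makes transparent where the degeneracy of $\rho$ intervenes (only in the identification of $D(\tilde A)$ with $D(A)$); the paper's approach has the advantage of plugging directly into the sharp structural results of \cite{Campiti1998} for degenerate one-dimensional operators, which also cover cases not reachable by a bare form argument.
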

\noindent (We give elements of its proof in section \ref{pf-thm3.1}).	
Finally, we recall also the following

	\begin{Proposition} (\cite{lions1968problemes}, Proposition 2.1)
		The real interpolation space constructed by the trace method $[D(A),L^2(I)]_{\frac{1}{2}}$ is the space $V$.
		\label{interpolation}
	\end{Proposition}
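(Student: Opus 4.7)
The plan is to identify $V$ as the form domain of a positive self-adjoint operator on $L^2(I)$, and then invoke the classical characterization of the domain of its square root as a real interpolation space of index $1/2$.

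First I would introduce the sesquilinear form
$$a(u,v) := \int_I u\bar{v}\,dx + \int_I \rho(x)\,u_x\,\bar{v}_x\,dx, \qquad u,v\in V.$$
By the definition of $V$ and Cauchy--Schwarz, $a$ is continuous, symmetric, and satisfies $a(u,u)=\|u\|_V^2$, so it is coercive. Completeness of $(V,\|\cdot\|_V)$ follows from the closedness of the weighted derivative $u\mapsto\sqrt{\rho}\,u_x$ acting in $L^2(I)$; together with the density of $V$ in $L^2(I)$ recalled from \cite{Campiti1998}, this shows that $a$ is a densely defined, closed, symmetric, positive form on $L^2(I)$.

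Next I would appeal to the first representation theorem to produce a positive self-adjoint operator $B$ on $L^2(I)$, of form domain exactly $V$, satisfying $a(u,v)=(Bu,v)_{L^2}$ for all $u\in D(B)$ and $v\in V$. I would then show $B=I-A$. For $u\in D(A)$ and $v\in V$, one integration by parts gives
$$\int_I \rho\, u_x\,\bar{v}_x\,dx = -\int_I (\rho u_x)_x \bar{v}\,dx,$$
which is legitimate because $\rho u_x\in H^1(I)\hookrightarrow C^0(\bar I)$ admits traces at $\pm 1$ and the observation made after \eqref{operator} forces those traces to vanish; hence $D(A)\subset D(B)$ and $Bu=u-Au$. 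Conversely, for $u\in D(B)$ one reads off $(\rho u_x)_x=u-Bu\in L^2(I)$ in the distributional sense, so $u\in D(A)$. In particular the form domain of $B$ equals $V$, which via the spectral theorem gives $D(B^{1/2})=V$.

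Finally I would invoke the classical result from \cite{lions1968problemes}: for any positive self-adjoint operator $B$ on a Hilbert space $H$, the trace-method real interpolation space satisfies
$$[D(B),H]_{1/2}=D(B^{1/2}).$$
Combined with $D(B)=D(A)$ and $D(B^{1/2})=V$, this yields $[D(A),L^2(I)]_{1/2}=V$. The main obstacle lies in the identification of $D(B)$ with $D(A)$: one must justify the vanishing of boundary terms when integrating by parts, and this is precisely where the degeneracy of $\rho$ at $x=\pm 1$, far from being an obstruction, plays a useful role---the condition $\sqrt{\rho}\,u_x\in L^2(I)$ built into $V$, together with $\rho u_x\in H^1(I)$ for elements of $D(A)$, is exactly what forces $(\rho u_x)(\pm1)=0$ and closes the argument.
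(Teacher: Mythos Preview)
Your argument is correct and is precisely the standard route to this result. Note, however, that the paper itself does not supply a proof: Proposition~\ref{interpolation} is simply quoted from \cite{lions1968problemes}, so there is no ``paper's own proof'' to compare with. What you have written is essentially a verification that the abstract result of Lions--Magenes applies in the present degenerate setting, by checking that $I-A$ is the self-adjoint operator associated with the closed form $a(\cdot,\cdot)=(\cdot,\cdot)_V$.

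One small point deserves a word of care. In the integration by parts
\[
\int_I \rho\,u_x\,\bar v_x\,dx \;=\; -\int_I (\rho u_x)_x\,\bar v\,dx,
\]
knowing that $(\rho u_x)(\pm1)=0$ is not by itself sufficient, since a general $v\in V$ need not be bounded near the endpoints. The cleanest way to close this is to first take $v\in C_c^\infty(I)$, where the boundary terms vanish trivially, and then pass to the limit using the density of $C_c^\infty(I)$ in $V$ (recalled just before Proposition~\ref{interpolation}). Alternatively one can argue directly: for $u\in D(A)$ one has $|\rho u_x(x)|\le \sqrt{1\mp x}\,\|(\rho u_x)_x\|_{L^2}$ near $x=\pm1$, while for $v\in V$ the estimate $|v(x)-v(0)|\le \|\sqrt{\rho}\,v_x\|_{L^2}\bigl(\int_0^x \rho^{-1}\bigr)^{1/2}=O\bigl(|\ln(1\mp x)|^{1/2}\bigr)$ shows that the product $(\rho u_x)v$ still tends to zero at the endpoints. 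Either way your identification $B=I-A$ goes through, and the rest of the proof is exactly the invocation of $[D(B),H]_{1/2}=D(B^{1/2})$ for positive self-adjoint $B$.
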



\subsection{The concept of mild solution for the Sellers type model \eqref{stab-eq-u}} \hfill

Consider the problem \eqref{complete eq} of the Sellers type.
In order to recast it into an abstract form, we introduce the following notations:
\begin{itemize}
\item to manage the nonlinear term, we consider the following function
	\begin{equation}
	G: [0,T] \times V \rightarrow L^2(I), \quad 
G(t,u)(x) = Q(t,x)\beta(u (x)) - \varepsilon(u (x))|u(x)|^3 u(x) ,
	\end{equation}
where we recall that $Q(t,x) = r(t)q(x)$; (note that since $V \hookrightarrow L^p(I)$ for all $p\geq 1$,
it is clear that $G ([0,T] \times V) \subset L^2(I)$);

\item to manage the shifted memory term: 
\begin{itemize}
\item given $u\in C([-\tau,T];L^2(I))$, given $t \in [0,T]$, we consider the right translation $u^{(t)} \in C([-\tau,0];L^2(I))$ by the formula
	\begin{equation}
	\label{eq-shift}
	u^{(t)}:  [-\tau,0] \to L^2(I), \quad u^{(t)}(s) := u(t+s) ,
		\end{equation}
\item and we define the following function
	\begin{equation}
	F: C([-\tau,0];L^2(I)) \rightarrow L^2(I) , \quad 
	F(v)(x) = f \left(\int_{-\tau}^{0}k(s,x) \, ( v(s)(x) ) \, d\sigma \right) ,
	\label{def F}
	\end{equation}
	\end{itemize}
	in such a way that the memory term can be written $F(u^{(t)})$.
	\end{itemize}
And then, given $T>0$, \eqref{stab-eq-u} on $[0,T]$ can be recast into:
	\begin{equation}
	\begin{cases}
	\dot{u}(t) = Au(t) + G(t,u) + F(u^{(t)}) & t \in [0,T]\\
	u(s) = u_0(s) & s \in [-\tau,0] .
	\label{integrodifferential}
	\end{cases}
	\end{equation}

Before defining the concept of mild solution for \eqref{integrodifferential}, we precise the concept of mild solution for the following linear nonhomogeneous problem
\begin{equation}
			\begin{cases}
			\dot{u}(t) = Au(t) + g(t) & t \in [0,T]\\
			u(0) = u_0 .
			\end{cases} 
			\label{linear eq}
			\end{equation}
We consider the following
	\begin{Definition}
		Let $ g \in L^2(0,T;L^2(I))$ and let $u_0 \in L^2(I)$. The function $u \in C([0,T]; L^2(I))$ defined by
		\begin{equation}
		\label{def-form-int}
\forall t\in [0,T], \quad 	u(t) = e^{tA} u_0 + \int_{0}^{t} e^{(t-s)A}g(s)ds
		\end{equation}
is called the \textup{mild solution} of \eqref{linear eq}.
	\end{Definition}
\noindent We recall that $u$ defined by \eqref{def-form-int}
has the following additionnal regularity:
$$ 	u \in H^1(0,T;L^2(I)) \cap L^2(0,T;D(A)).$$

Now we are ready to define the concept of mild solution for \eqref{integrodifferential}:
	
\begin{Definition}
Given $u_0 \in C([-\tau,0];V)$, a function 
$$ u \in H^1(0,T;L^2(I)) \cap L^2(0,T;D(A)) \cap C([-\tau,T];V)$$ 
is called a \textup{mild solution} of \eqref{integrodifferential} on $[0,T]$ if
		\begin{itemize}
			\item[(i)] $u(s) = u_0(s)$ for all $s \in [-\tau,0]$;
			
			\item [(ii)] for all $t\in [0,T]$, we have
			\begin{equation}
			u(t) = e^{tA}u_0(0) + \int_{0}^{t} e^{(t-s)A} \bigl( G(s,u) + F(u^{(s)}) \bigr) \, ds .
			\label{nonlinear eq}
			\end{equation}
		\end{itemize}
		\label{mild}
\end{Definition} 
	

\subsection{Global existence and uniqueness result for the Sellers model \eqref{integrodifferential}} \hfill
\label{subsec-wpSellers}

Now we are ready to prove the global existence result of the integrodifferential problem.
	
	\begin{Theorem}
Consider $u_0$ such that
$$ u_0 \in C([-\tau,0];V) \quad \text{ and } \quad u_0(0) \in D(A) \cap L^\infty (I) .$$
Then, for all $T>0$, the problem \eqref{integrodifferential} has a unique mild solution $u$ on $[0,T]$.
		\label{thm-Sellers global existence}
	\end{Theorem}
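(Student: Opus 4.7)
The plan is to combine local-in-time existence via a Banach fixed point argument applied to the Duhamel formula~\eqref{nonlinear eq} with an a priori $L^\infty$ bound coming from the dissipativity of the radiation term $-\varepsilon(u)|u|^3 u$, so that the local solution can then be continued on any $[0,T]$. Uniqueness will follow from a Gronwall-type estimate applied iteratively on intervals of length $\tau$.

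For local existence, fix $T_0\in(0,1]$ to be chosen, set $X_{T_0}:=C([0,T_0];V)$, and define
\[
\Phi(u)(t) := e^{tA}u_0(0) + \int_0^t e^{(t-s)A}\bigl(G(s,u) + F(u^{(s)})\bigr)\,ds,
\]
where any $u\in X_{T_0}$ is extended to $[-\tau,T_0]$ by $u=u_0$ on $[-\tau,0]$, so that $u^{(s)}$ makes sense. The key quantitative bound is the analyticity estimate
\[
\|e^{tA}v\|_V\leq C\,t^{-1/2}\|v\|_{L^2(I)},\qquad t>0,
\]
which follows from Theorem~\ref{thm-Aanal} together with the identification $V=[D(A),L^2(I)]_{1/2}$ of Proposition~\ref{interpolation}. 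Since $\beta,\beta',\varepsilon,\varepsilon'$ are bounded, $q\in L^\infty$, and $V\hookrightarrow L^p(I)$ for every finite $p$, the map $u\mapsto G(t,u)$ is locally Lipschitz from $V$ to $L^2(I)$; since $f$ is Lipschitz and $k$ is bounded, $F$ is globally Lipschitz from $C([-\tau,0];L^2(I))$ to $L^2(I)$. Using $\int_0^{T_0}s^{-1/2}\,ds=2T_0^{1/2}$, a standard computation shows that on a suitable closed ball of $X_{T_0}$ centered at the constant trajectory $t\mapsto u_0(0)$, the map $\Phi$ is a strict contraction provided $T_0$ is small enough. The extra regularity $H^1(0,T_0;L^2)\cap L^2(0,T_0;D(A))$ required by Definition~\ref{mild} is then a standard consequence of analytic semigroup theory applied to the source $s\mapsto G(s,u)+F(u^{(s)})\in L^2(0,T_0;L^2(I))$.

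For the global $L^\infty$ bound, I would use a comparison principle for the degenerate operator $\partial_t-(\rho\partial_x)_x$ (which is available because constants are trivially supersolutions of the linear part and the degeneracy is only at the boundary). Since $\varepsilon\geq\varepsilon_1>0$ and $f,r,q,\beta$ are all uniformly bounded, for every large enough constant $M$ we have
\[
r(t)q(x)\beta(M) + \|f\|_\infty - \varepsilon(M)\,M^4 \leq 0,
\]
with a symmetric inequality for $-M$. Choosing $M_0$ with $M_0\geq\|u_0(0)\|_\infty$ and $M_0$ large enough to satisfy both inequalities, one gets $\|u(t)\|_\infty\leq M_0$ on any interval of existence, which makes $G$ globally Lipschitz on the region of interest and rules out finite-time blow-up. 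Iterating the local existence step then produces a solution on any $[0,T]$.

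For uniqueness, if $u$ and $\tilde u$ are two mild solutions with the same datum, the difference $w=u-\tilde u$ vanishes on $[-\tau,0]$ and satisfies $\dot w=Aw+[G(t,u)-G(t,\tilde u)]+[F(u^{(t)})-F(\tilde u^{(t)})]$. Testing in $L^2(I)$, using $\langle Aw,w\rangle_{L^2}\leq 0$ (from the self-adjoint dissipative nature of $A$), the local Lipschitz bound on $G$ on the $L^\infty$-bounded region, and the Lipschitz bound on $F$, one gets
\[
\frac{d}{dt}\|w(t)\|_{L^2}^2 \leq C\|w(t)\|_{L^2}^2 + C\int_{-\tau}^{0}\|w(t+s)\|_{L^2}^2\, ds,
\]
and Gronwall applied successively on $[0,\tau],[\tau,2\tau],\dots$ forces $w\equiv 0$. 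The main technical obstacle, and the point that distinguishes this from a textbook exercise, is the combination of the degenerate diffusion (which forces one to work with the weighted space $V$ and to rely on the $t^{-1/2}$ parabolic smoothing provided by analyticity of $e^{tA}$ on $L^2(I)$) with the delay operator $u\mapsto u^{(t)}$, whose continuity properties from $C([-\tau,T];V)$ into $C([0,T];L^2(I))$ must be tracked carefully through the fixed-point, continuation, and uniqueness steps.
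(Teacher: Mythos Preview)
Your proposal is correct and follows the same overall architecture as the paper: local existence via a contraction argument in $C([-\tau,t^*];V)$ using the analytic smoothing estimate, an a priori $L^\infty$ bound from the sign of $-\varepsilon(u)|u|^3u$, continuation to global time, and an $L^2$ energy/Gronwall argument for uniqueness.

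There are only minor differences in execution. For the $L^\infty$ bound the paper does not invoke a comparison principle (which for the degenerate operator would itself require justification) but instead tests the equation against $(u-M)^+$ and $(u+M)^-$ directly; this Stampacchia-type argument is in fact how one would prove the comparison principle you quote. For the continuation step, rather than iterating the local argument---which would force you to check that the restarted datum again lies in $D(A)$---the paper shows that $t\mapsto u(t)$ satisfies a Cauchy criterion in $V$ as $t\uparrow T^*(u_0)$, using the uniform bound on $\|G(s,u(s))\|_{L^2}$ coming from the $L^\infty$ estimate together with the fractional-power bound $\|(-\tilde A)^{3/4}e^{t\tilde A}\|\le ct^{-3/4}$. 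For uniqueness, the paper handles the memory term in a single stroke by showing $\int_0^T\|H-\tilde H\|_{L^2}^2\,dt\le cT\int_0^T\|w\|_{L^2}^2\,dt$ and applying Gronwall to the integrated quantity $W(T)=\int_0^T\|w(t)\|_{L^2}^2\,dt$, rather than iterating on intervals of length $\tau$; both approaches are valid.
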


(Note that 
\begin{itemize}
\item existence and uniqueness of a global regular solution to \eqref{complete eq} without the memory term has been proved in \cite{TORT2012683}
; 

\item the local existence of our model without the boundary degeneracy has been studied in \cite{Roques20140349};

\item the global existence of a similar 2D-model with memory (hence on a manifold but without the boundary degeneracy), has been investigated in \cite{Diaz-Hetzer}.)
\end{itemize}
	

\subsection{Inverse problem results: determination of the insolation function} \hfill


Here we prove that the insolation function $q(x)$ can be determined in the whole space domain $I$ by using only local information about the temperature.
	
	To achieve this goal, we add the following extra assumptions, as in \cite{Roques20140349}:
the very recent past temperatures are not taken into account in the history function:
	\begin{equation}
	\exists \delta > 0  \mbox{ s.t. } k(s,\cdot) \equiv 0 \quad \forall s \in [-\delta,0]
	\label{hp delta}
	\end{equation}
	where $\delta < \tau$.
(We will discuss about this assumption in section \ref{sec-open}.)

Hence, we have the following situation:
consider two insolation functions $q$ and $\tilde q$, two initial conditions $u_0$ and $\tilde u_0$, and the associated solutions: $u$ satisfying \eqref{stab-eq-u}
and $\tilde u$ satisfying
\begin{equation}
\label{stab-eq-utilde}
	\begin{cases}
	\tilde u_t - (\rho (x) \tilde u_x)_x = r(t)\tilde q(x)\beta (\tilde u) -\varepsilon (\tilde u) \vert \tilde u \vert ^3 \tilde u + f(\tilde H), \quad t>0,  x \in I,\\
	\rho (x) \tilde u_x = 0, \quad x \in \partial I ,\\
	\tilde u(s,x) = \tilde u_0(s,x), \quad s \in [-\tau,0], x\in I ,
	\end{cases}
\end{equation}
where we denote
$$
\tilde H := H(t,x,\tilde u) = \int _{-\tau} ^{-\delta} k(s,x) \tilde u(t+s,x) \, ds .$$
In the following, we state two inverse problems results, according to different assumptions on the control region.

\subsubsection{Pointwise observation and uniqueness result} \hfill
\label{subsec-inv-pt}

Let us choose suitable regularity assumptions on the initial conditions and on the insolation functions, in order to have sufficient regularity on the time derivative of the associated solutions: we consider

\begin{itemize}
\item the set of admissible initial conditions: we consider

\begin{equation}
\label{ensembleci2D-A-pt}
\mathcal U  ^{(pt)} = C^{1,2}([-\tau,0] \times [-1,1]),
\end{equation}

\item and the set of admissible coefficients: we consider 

\begin{equation}
\label{ensembleci2D-coeff-pt}
\mathcal Q ^{(pt)} :=\{q \mbox{ is Lipschitz-continuous and piecewise analytic on } I\} ,
\end{equation}
where we recall the following

	\begin{Definition}
		A continuous function $\psi$ is called \textit{piecewise analytic} if there exist $n \ge 1$ and an increasing sequence $(p_j)_{1 \leq j \leq n}$ such that $p_1 = -1$, $p_n=1$, and
		$$ \psi(x)=\sum_{j=1}^{n-1} \chi_{[p_j,p_{j+1})}(x) \varphi_j(x) \quad \forall x \in I , $$ 
where $\varphi_j$ are analytic functions defined on the intervals $[p_j,p_{j+1}]$ 
and $\chi_{[p_j,p_{j+1})}$ is the characteristic function of the interval $[p_j,p_{j+1})$
for $j = 1, \dots, n-1$.
	\end{Definition}
\end{itemize}
		
Then we prove the following uniqueness result:
	\begin{Theorem}
Consider
\begin{itemize}
\item two insolation functions $q, \tilde{q} \in \mathcal Q ^{(pt)}$ (defined in \eqref{ensembleci2D-coeff-pt})
\item an initial condition $u_0=\tilde u_0 \in \mathcal U  ^{(pt)}$ (defined in \eqref{ensembleci2D-A-pt})
\end{itemize}
and let $u$ be the solution of \eqref{stab-eq-u} and $\tilde{u}$ the solution of \eqref{stab-eq-utilde}. 

Assume that
\begin{itemize}
\item the memory kernel satisfies \eqref{hp delta},
\item $r$ and $\beta$ are positive,
\item there exists $x_0 \in I$ and $T >0$ such that
		\begin{equation}
\forall t \in (0,T), \quad	
\begin{cases}
	u(t,x_0) = \tilde{u}(t,x_0) , \\
	u_x (t,x_0) = \tilde{u}_x (t,x_0) .
	\end{cases}
		\label{assumption u}
		\end{equation}
\end{itemize}
Then $q \equiv \tilde{q}$ on $I$.
		\label{thm-inverse}
\end{Theorem}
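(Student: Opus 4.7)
The plan is to first exploit hypothesis \eqref{hp delta} to reduce the problem, on a short time interval, to a \emph{linear} inverse source problem. Set $w:=u-\tilde u$ and $\theta:=q-\tilde q$; because $u_0=\tilde u_0$, one has $w\equiv 0$ on $[-\tau,0]\times I$. For $t\in[0,\delta]$, \eqref{hp delta} reduces the history term to
\[
H(t,x,u)=\int_{-\tau}^{-\delta}k(s,x)\,u(t+s,x)\,ds ,
\]
and since $t+s\in[-\tau,0]$ the integrand only sees $u_0=\tilde u_0$, so $f(H)=f(\tilde H)$. Linearising the Sellers nonlinearities via mean-value representations $\beta(u)-\beta(\tilde u)=B_1(t,x)\,w$ and $\varepsilon(u)|u|^3u-\varepsilon(\tilde u)|\tilde u|^3\tilde u=B_2(t,x)\,w$ with $B_1,B_2\in L^\infty$, subtraction yields on $(0,\delta)\times I$ the linear degenerate problem
\[
w_t-(\rho w_x)_x+c(t,x)\,w=r(t)\,\theta(x)\,\beta(\tilde u(t,x)),\qquad w(0,\cdot)\equiv 0 .
\]

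\textbf{Inductive extraction of $\theta^{(k)}(x_0)$.} Next I would pick a piece $[p_j,p_{j+1}]$ of a common refinement of the piecewise-analytic partitions of $q$ and $\tilde q$, containing $x_0$ in its interior; on this piece all spatial coefficients are analytic in $x$. The observation \eqref{assumption u} gives $\partial_t^m w(t,x_0)=\partial_t^m w_x(t,x_0)=0$ for every $m\ge 0$ and $t\in(0,\min(T,\delta))$. Evaluating the equation at $x_0$, the only surviving terms yield
\[
\rho(x_0)\,w_{xx}(t,x_0)=-\,r(t)\,\theta(x_0)\,\beta(\tilde u(t,x_0)) .
\]
Letting $t\to 0^+$, the left-hand side vanishes because $w(0,\cdot)\equiv 0$ forces $w_{xx}(0,x_0)=0$, while the right-hand side tends to $-r(0)\theta(x_0)\beta(u_0(0,x_0))$; positivity of $r$ and $\beta$ then forces $\theta(x_0)=0$, and consequently $w_{xx}(t,x_0)\equiv 0$. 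Differentiating the PDE once in $x$, evaluating at $x_0$, and repeating the $t\to 0^+$ limit gives $\theta'(x_0)=0$ and $w_{xxx}(t,x_0)\equiv 0$. An induction on $k\ge 0$, consisting in differentiating the equation $k$ times in $x$, evaluating at $x_0$, using the already-established vanishings, and taking $t\to 0^+$, produces $\theta^{(k)}(x_0)=0$ and $\partial_x^{k+2} w(t,x_0)\equiv 0$ for every $k$.

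\textbf{Conclusion on one piece and propagation.} Since $\theta$ is real-analytic on $[p_j,p_{j+1}]$ and all its derivatives vanish at the interior point $x_0$, $\theta\equiv 0$ on the whole piece. Analyticity of $w(t,\cdot)$ on the same piece (parabolic smoothing with analytic spatial coefficients), combined with $\partial_x^k w(t,x_0)\equiv 0$ for all $k$, gives $w\equiv 0$ on $[p_j,p_{j+1}]\times(0,\delta)$. By the $C^{1,2}$-regularity of $u,\tilde u$ inherited from $u_0\in\mathcal U^{(pt)}$, $w$ and $w_x$ are continuous in $x$ across the interior endpoint $p_{j+1}$, so $w(t,p_{j+1})=w_x(t,p_{j+1})=0$; treating $p_{j+1}$ as a new observation point and rerunning the induction on $[p_{j+1},p_{j+2}]$, finitely many iterations cover all of $I$ and yield $\theta\equiv 0$, i.e.\ $q\equiv\tilde q$.

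\textbf{Expected main obstacle.} The hardest step will be justifying the arbitrary-order induction in $x$: the standing hypotheses give only $\beta\in C^2$ and $u\mapsto|u|^3u\in C^3$, which a priori caps the number of times the equation can be legitimately differentiated. I would address this by a bootstrap argument on each analytic piece -- upgrading the spatial regularity of $u,\tilde u$ using analyticity of $\rho,q,\tilde q$ there and, if necessary, a local maximum principle ensuring that $u$ stays in a range where $u\mapsto|u|^3u$ is analytic -- so that the compositions with $\beta$ and with the Stefan-Boltzmann nonlinearity remain smooth enough to sustain the iteration. A secondary but non-trivial point is checking the $C^1_x$ continuity of $w$ at the interior breakpoints $p_j$, needed to transfer the observation conditions from $x_0$ to the neighbouring piece endpoints.
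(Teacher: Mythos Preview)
Your setup on the memory-free window and the linearisation of the difference equation are correct and match the paper. The approaches diverge sharply after that: the paper does \emph{not} attempt to extract derivatives of $\theta$ at $x_0$. Instead it argues by contradiction on the infimum $y_1$ of the set where $q\equiv\tilde q$ on $[y_1,x_0]$, uses piecewise analyticity of $q-\tilde q$ to find a nearby point $y_2<y_1$ where $q-\tilde q$ has a definite sign on $[y_2,y_1)$, checks that $v_t(0,y_2)=r(0)\beta(u_0)(q-\tilde q)(y_2)>0$ so that $v(t,y_2)>0$ for short time, and then applies the \emph{strong maximum principle} on the rectangle $(0,t_1)\times(y_2,x_0)$ together with \emph{Hopf's lemma} at $x_0$ to contradict $v_x(t,x_0)=0$. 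This argument needs only boundedness of the linearised coefficients, never their differentiability.

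Your induction, by contrast, has a genuine gap that you yourself flag but do not resolve. To obtain $\theta^{(k)}(x_0)=0$ for all $k$ you must differentiate the linearised equation $k$ times in $x$. The potential $c(t,x)=B_2(t,x)-r(t)\tilde q(x)B_1(t,x)$ is built from the difference quotients $B_1=\int_0^1\beta'(\tilde u+s w)\,ds$ and $B_2=\int_0^1\partial_u\bigl(\varepsilon(u)|u|^3u\bigr)\big|_{\tilde u+sw}\,ds$; under the standing hypotheses \eqref{beta-Sellers} and \eqref{epsilon-Sellers} one has only $\beta\in C^2$ and $\varepsilon\in C^1$, so $B_2$ is merely continuous and $B_1$ merely $C^1$ in $x$. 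The equation therefore cannot be differentiated more than once or twice, and your ``bootstrap using analyticity of $\rho,q,\tilde q$'' cannot help: analyticity of the \emph{spatial} coefficients does nothing for the compositions $\beta\circ u$ and $\varepsilon\circ u$, whose regularity is capped by that of $\beta$ and $\varepsilon$ themselves. The suggestion of restricting to a range where $u\mapsto|u|^3u$ is analytic addresses the wrong term; the obstruction is $\varepsilon$ and $\beta$, for which no analyticity is assumed anywhere in the paper.

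The maximum-principle route of the paper sidesteps this entirely: it requires no $x$-differentiation of the equation, only a sign on the source term and $L^\infty$ bounds on the coefficients, both of which are immediate. Your propagation step (moving the observation point to the breakpoint $p_{j+1}$) would also need separate justification, since $p_{j+1}$ is not interior to the next analytic piece; the paper avoids this by working directly with the infimum $y_1$ and never needing to ``restart'' at a breakpoint.
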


This result means that the insolation function $q(x)$ is uniquely determined on $I$ by any measurement of $u$ and $u_x$ at a single point $x_0$ during the time period $(0,T)$. Theorem \ref{thm-inverse} is a natural extension of \cite{Roques20140349} to the degenerate problem.



\subsubsection{Localized observation and stability result} \hfill
\label{subsec-inv-loc}

Let us choose suitable regularity assumptions on the initial conditions and on the insolation functions, in order to have sufficient regularity on the time derivative of the associated solutions: we consider

\begin{itemize}
\item the set of admissible initial conditions: given $M>0$, we consider $\mathcal U ^{(loc)} _M$:

\begin{multline}
\label{ensembleci2D-A}
\mathcal U^{(loc)} _M :=\{u_{0}\in C([-\tau,0]; V\cap L^\infty (-1,1)), u_0(0) \in D(A), Au_0(0) \in L^\infty (I),
\\
\sup _{t\in [-\tau,0]} \bigl( \Vert u_0 (t) \Vert _{V} + \Vert u_0 (t) \Vert _{L^\infty } \bigr) + \Vert A u_0 (0)\Vert _{L^\infty(I)} \leq M \} ,
\end{multline}

\item and the set of admissible coefficients: given $M'>0$, we consider 

\begin{equation}
\label{ensembleci2D-coeff}
\mathcal Q^{(loc)}  _{M'} :=\{q \in L^{\infty}(I): \Vert q \Vert _{L^{\infty}(I)}\leq M'\}.
\end{equation}
\end{itemize}

Now we are ready to state our Lipschitz stability result:
\begin{Theorem}
\label{thm-inv-stab}
Assume that 
\begin{itemize}
\item the memory kernel satisfies \eqref{hp delta},
\item $r$ and $\beta$ are positive. 
\end{itemize}
Consider 
\begin{itemize}
\item $0 < T' < \delta$,
\item $t_0 \in [0, T')$, $T>T'$,
\item $M,M'>0$.
\end{itemize}
Then there exists 
$C (t_0,T',T,M,M')>0$ such that, for all $u_0, \tilde u_0 \in \mathcal U^{(loc)} _M$ (defined in \eqref{ensembleci2D-A}), for all $q, \tilde q \in \mathcal Q^{(loc)}  _{M'}$ (defined in \eqref{ensembleci2D-coeff}),  the solution $u$ of \eqref{stab-eq-u} and the solution $\tilde u$ of \eqref{stab-eq-utilde} satisfy 
\begin{multline}
\label{PISstab1var}
\Vert q -\tilde q \Vert _{L^{2}(I)}^{2}
\leq C
\Bigl( \Vert u( T')- \tilde u( T') \Vert _{D(A)}^{2}
\\
+ \Vert u_t - \tilde u_t \Vert _{L^{2}((t_0,T)\times (a,b))}^{2}
+ \Vert u_0 - \tilde u_0 \Vert _ {C([-\tau,0]; V)} ^2 \Bigr) .
\end{multline} 
\end{Theorem}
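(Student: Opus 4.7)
Following the Imanuvilov-Yamamoto strategy \cite{Imanuvilov} as adapted to the degenerate setting in \cite{TORT2012683}, set $w:=u-\tilde u$ and $p(x):=q(x)-\tilde q(x)$. Subtracting the two equations, and using the Lipschitz regularity of $\beta$, $\varepsilon$ and $s\mapsto |s|^{3}s$ together with the uniform $L^\infty$ bound on $u,\tilde u$ (which follows from Theorem~\ref{thm-Sellers global existence} and the control of initial data in $\mathcal{U}^{(loc)}_M$), one obtains
\begin{equation*}
w_t-(\rho w_x)_x \;=\; r(t)\beta(u)\, p(x) \;+\; \Phi_1(t,x,w) \;+\; \Phi_2(t,x),
\end{equation*}
where $\Phi_1$ is Lipschitz in $w$ (uniformly on bounded sets) and $\Phi_2(t,x):=f(H(t,x,u))-f(H(t,x,\tilde u))$ collects the memory difference. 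By \eqref{hp delta}, the kernel $k$ vanishes on $[-\delta,0]$, so $\Phi_2(t,\cdot)$ depends on $w(t+s,\cdot)$ only for $s\in[-\tau,-\delta]$; in particular, when $t\le \delta$ it depends only on $u_0-\tilde u_0$. A standard energy plus Gr\"onwall argument then furnishes an a priori bound
\begin{equation*}
\|w\|_{C([0,T];L^{2})}+\|\sqrt{\rho}\, w_{x}\|_{L^{2}((0,T)\times I)} \;\leq\; C\bigl(\|p\|_{L^{2}(I)}+\|u_0-\tilde u_0\|_{C([-\tau,0];V)}\bigr),
\end{equation*}
used later to absorb lower-order remainders.

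\textbf{Carleman for $z=w_t$.} Differentiating the equation in $t$ and writing $z:=w_t$ gives
\begin{equation*}
z_t-(\rho z_x)_x \;=\; \bigl(r'(t)\beta(u)+r(t)\beta'(u)u_t\bigr)\, p(x) \;+\; \partial_t\Phi_1 \;+\; \partial_t\Phi_2,
\end{equation*}
with $u_t\in L^\infty((0,T)\times I)$ coming from the regularity part of Theorem~\ref{thm-Sellers global existence}. Apply the degenerate-parabolic Carleman inequality of Cannarsa-Martinez-Vancostenoble \cite{sicon2008} to this equation on $(t_0,T)\times I$, with observation subinterval $(a,b)$ and weight $\theta(t,x)=\Theta(t)\Psi(x)$ whose time factor blows up at $t_0$ and $T$ and whose spatial factor is tailored to the boundary degeneracy. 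This yields, for Carleman parameter $s\ge s_0$, a weighted control of $z$ by a weighted norm of the right-hand side plus the localized observation $\int_{t_0}^{T}\!\!\int_{a}^{b} e^{-2s\theta} z^{2}$.

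\textbf{Extraction of $p$ at $t=T'$.} Integrating $\frac{d}{dt}\int_{I} z^{2}e^{-2s\theta(t,\cdot)}\,dx$ between $t_0$ and $T'$ and using the blow-up of $\theta$ at $t=t_0$ to discard the contribution at $t_0$, we control $\int_{I} z(T',\cdot)^{2} e^{-2s\theta(T',\cdot)}\,dx$ by the Carleman right-hand side. The positivity of $r$ and $\beta$, together with the $L^\infty$ bound on $u$, makes $R(x):=r(T')\beta(u(T',x))$ bounded below uniformly in $x$, so evaluating the equation for $w$ at $t=T'$ and squaring gives
\begin{equation*}
\|p\|_{L^{2}(I)}^{2} \;\leq\; C\Bigl(\|w_t(T')\|_{L^{2}}^{2}+\|w(T')\|_{L^{2}}^{2}+\|\Phi_2(T',\cdot)\|_{L^{2}}^{2}+\|z\|_{L^{2}((t_0,T)\times(a,b))}^{2}\Bigr).
\end{equation*}
Now $\|\Phi_2(T',\cdot)\|_{L^{2}}^{2}\leq C\|u_0-\tilde u_0\|_{C([-\tau,0];V)}^{2}$ because $T'<\delta$; $\|w_t(T')\|_{L^{2}}^{2}$ is dominated by $\|w(T')\|_{D(A)}^{2}+\|p\|_{L^{2}(I)}^{2}+\|u_0-\tilde u_0\|_{C([-\tau,0];V)}^{2}$ via the PDE itself; and the $\|p\|_{L^{2}}^{2}$ that reappears is absorbed by choosing $s$ large enough. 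The source terms $\partial_t\Phi_1,\partial_t\Phi_2$ in the equation for $z$ are treated identically, using the a priori bound of the first step.

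\textbf{Main obstacle.} The technical heart of the proof is twofold: (i) keeping explicit track of the Carleman parameter $s$ through the successive absorption steps in the degenerate geometry of \cite{sicon2008}, and (ii) handling the nonlocal memory contributions $\Phi_2$ and $\partial_t\Phi_2$, which a priori couple the Carleman window $(t_0,T)$ with the whole history. The assumption $T'<\delta$ in \eqref{hp delta} is precisely what makes the evaluation of the memory term at $t=T'$ depend on the past only through the initial-data norm, thereby decoupling the inverse-problem argument from the full history over $[0,T]$.
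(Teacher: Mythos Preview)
Your overall architecture is the paper's own: difference $w=u-\tilde u$, time-derivative $z=w_t$, degenerate Carleman estimate for $z$ on $(t_0,T)\times I$, extraction of $p=q-\tilde q$ via the equation for $w$ at $t=T'$, and use of $T'<\delta$ to reduce the memory term $\Phi_2$ (and $\partial_t\Phi_2$) to the initial-data norm. That is exactly the route taken here, following \cite{TORT2012683, Sellers-PM-JT-JV}.

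However, your ``Extraction of $p$ at $t=T'$'' paragraph contains a circularity and skips the one step that actually closes the loop. You first (correctly) bound $\int_I z(T',\cdot)^2 e^{-2s\theta(T',\cdot)}$ by the Carleman left-hand side. You then propose to bound $\|w_t(T')\|_{L^2}^2$ \emph{again} ``via the PDE itself'' by $\|w(T')\|_{D(A)}^2+\|p\|_{L^2}^2+\cdots$, and absorb the resulting $\|p\|_{L^2}^2$ by ``choosing $s$ large enough''. That inequality is unweighted and has no $s$ in it; there is nothing to absorb with. The point is that everything must remain weighted: from $K^*(T')=z(T')-(\rho w_x)_x(T')-\Phi_1(T')-\Phi_2(T')$ one gets
\[
\int_I |K^*(T',x)|^2 e^{-2s\theta(T',x)}\,dx \;\le\; C\Bigl(\int_I z(T')^2 e^{-2s\theta(T')}+\|w(T')\|_{D(A)}^2+\|u_0-\tilde u_0\|_{C([-\tau,0];V)}^2\Bigr),
\]
and the weighted $z(T')$ term is fed by the Carleman estimate, whose right-hand side still contains the weighted source term $\int_{t_0}^T\!\!\int_I (K^*_t)^2 e^{-2s\theta}$ with $K^*_t=(r'\beta(u)+r\beta'(u)u_t)\,p$. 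The mechanism that makes this small is the paper's Steps~2 and~4, which you do not state: since $u_t\in L^\infty$ (your Lemma), one has the \emph{pointwise} bound $|K^*_t(t,x)|\le C_0\,|K^*(T',x)|$, and by the strict minimum of the time weight at $T'$,
\[
\int_{t_0}^T e^{-2s\theta(t,x)}\,dt \;=\; o\bigl(e^{-2s\theta(T',x)}\bigr)\qquad (s\to\infty),
\]
so that $\int_{t_0}^T\!\!\int_I (K^*_t)^2 e^{-2s\theta}=o\bigl(\int_I |K^*(T',x)|^2 e^{-2s\theta(T',x)}\bigr)$ and is absorbed on the left for $s$ large. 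This time-integral smallness is the Imanuvilov--Yamamoto device; without it the loop does not close.

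A minor difference worth flagging: to handle $\partial_t\Phi_1$ you invoke your a priori energy bound $\|w\|\le C(\|p\|+\|u_0-\tilde u_0\|)$, which again feeds a $\|p\|$ into the Carleman right-hand side. The paper instead writes $|\partial_t\Phi_1|\le c(|w|+|z|)$ and uses $w(t)=w(T')-\int_t^{T'} z$ to reduce to $\|w(T')\|_{L^2}^2+\int z^2 e^{-2s\theta}$, the second of which is absorbed by the Carleman left-hand side directly (via Hardy-type inequalities in the degenerate geometry). Your route is also workable, but only once the time-integral smallness above is in place to swallow the extra $\|p\|^2$.
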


 Theorem \ref{thm-inv-stab} is a natural extension of \cite{TORT2012683}.
%
%


\section{Main result for the Budyko type model} 
\label{sec-b-results-1D}

Now we treat the global existence of regular solutions for the Budyko model. In a classical way (see, e.g. Diaz \cite{Diaz1997}), we study the set valued problem
\begin{itemize} 
\item  first regularizing the coalbedo, hence transforming the Budyko type problem into a Sellers one, for which we have a (unique) regular solution, 
\item and then passing to the limit with respect to te regularization parameter.
\end{itemize}

Since $ \beta$ is the graph given in \eqref{hyp-Budyko}, the Budyko type problem has to be understood as
the following differential inclusion problem:
\begin{equation}
\label{stab-eq-u-B}
	\begin{cases}
	u_t - (\rho (x) u_x)_x \in r(t)q(x)\beta (u) -(a+bu) + f (H(u)), \quad t>0,  x \in I,\\
	\rho (x) u_x = 0, \quad x = \pm 1 ,\\
	u(s,x) = u_0(s,x), \quad s \in [-\tau,0], x\in I .
	\end{cases}
\end{equation}
In this section, we assume \eqref{hyp-rho}-\eqref{initial condition}.


\subsection{The notion of mild solutions for the Budyko model \eqref{stab-eq-u-B}} \hfill

Let us define a mild solution for this kind of problem.

\begin{Definition}
Given $u_0 \in C([-\tau,0);V)$, a function 
$$ u \in H^1(0,T;L^2(I)) \cap L^2(0,T;D(A)) \cap C([-\tau,T];V)$$ 
is called a \textit{mild solution} of \eqref{stab-eq-u-B} on $[-\tau,T]$ iff
	\begin{itemize}
		\item $u(s) = u_0(s)$ for all $s \in [-\tau,0]$;
		\item there exists $g \in L^2([0,T];L^2(I))$ such that 
		\begin{itemize}
		\item $u$ satisfies
		\begin{equation}
\forall t\in [0,T], \quad  u(t) = e^{tA} u_0 (0) + \int _0 ^t e^{(t-s)A} g(s) \, ds ,
		\label{linear2}
		\end{equation}
		 \item and $g$ satisfies the inclusion
		 $$g(t,x) \in r(t)q(x)\beta (u(t,x)) -(a+bu(t,x)) + f (H(t,x,u)) \quad \text{ a.e. } (t,x) \in (0,T)\times I .$$
		 \end{itemize}
	\end{itemize}
\end{Definition}

\subsection{Global existence for the Budyko model \eqref{stab-eq-u-B}} \hfill

\begin{Theorem} \label{thm-Budyko global existence}
Assume that 
$$u_0 \in C([-\tau,0],V) \quad \text{ and } \quad u_0(0) \in D(A) \cap L^\infty (I).$$
Then \eqref{stab-eq-u-B} has a mild solution $u$, which is global in time (i.e. defined in $[0,+\infty)$
and mild on $[0,T]$ for all $T>0$).
\end{Theorem}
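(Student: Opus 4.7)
The strategy, as announced before the statement, is to regularize the multivalued coalbedo, apply Theorem \ref{thm-Sellers global existence}, and then pass to the limit. I would start by introducing a sequence of approximations $\beta_n \in C^2(\mathbb{R})$ of the Budyko graph satisfying $\beta_n(u) = a_i$ for $u \le \bar u - 1/n$, $\beta_n(u) = a_f$ for $u \ge \bar u + 1/n$, monotone non-decreasing in between and uniformly bounded, so that each $\beta_n$ satisfies \eqref{beta-Sellers}. Applying (a variant of) Theorem \ref{thm-Sellers global existence} — adapted by replacing the emission term $\varepsilon(u)|u|^3u$ by the globally Lipschitz $a+bu$, which only simplifies the fixed point argument in the mild formulation \eqref{nonlinear eq} — yields for each $n$ a unique mild solution $u_n \in H^1(0,T;L^2(I)) \cap L^2(0,T;D(A)) \cap C([-\tau,T];V)$ of the regularized problem with coalbedo $\beta_n$.

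Next I would derive uniform a priori estimates. Since $a_i \le \beta_n \le a_f$, $\Vert f\Vert_{L^\infty}<\infty$, and $r,q \in L^\infty$, a Gronwall estimate on $\Vert u_n(t)\Vert_{L^2(I)}^2$ gives a uniform bound in $C([0,T];L^2(I))$; consequently the source term $g_n := r(t)q(x)\beta_n(u_n) - (a+bu_n) + f(H(t,x,u_n))$ is uniformly bounded in $L^2(0,T;L^2(I))$. Standard parabolic regularity for the analytic semigroup of $A$ (Theorem \ref{thm-Aanal}) then gives uniform bounds for $(u_n)$ in $H^1(0,T;L^2(I)) \cap L^2(0,T;D(A))$, hence in $C([0,T];V)$ by Proposition \ref{interpolation}. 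A maximum principle argument, testing with $(u_n-K)^+$ for $K$ large enough, also provides a uniform $L^\infty((-\tau,T)\times I)$ bound.

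I would then extract a subsequence (not relabeled) such that $u_n \rightharpoonup u$ in $H^1(0,T;L^2(I)) \cap L^2(0,T;D(A))$, $u_n \to u$ strongly in $L^2(0,T;L^2(I))$ and a.e.\ (Aubin–Lions, using the compact embedding $V \hookrightarrow L^2(I)$), and $\beta_n(u_n) \rightharpoonup^{*} g$ in $L^\infty((0,T)\times I)$. The memory term passes to the limit by dominated convergence (since $k \in C^1$ and $f$ is Lipschitz and bounded), and the linear emission term is trivial. The main obstacle is the identification $g(t,x) \in \beta(u(t,x))$ almost everywhere. I would split $(0,T)\times I$ into the three sets $\{u > \bar u\}$, $\{u < \bar u\}$, $\{u = \bar u\}$: on $\{u > \bar u\}$, the almost-everywhere convergence $u_n \to u$ combined with an Egorov argument gives $u_n > \bar u + 1/n$ eventually on exhausting subsets, whence $\beta_n(u_n)=a_f$ and $g=a_f=\beta(u)$ a.e.; symmetrically $g=a_i=\beta(u)$ a.e.\ on $\{u<\bar u\}$; and on $\{u=\bar u\}$ the uniform bound gives $a_i\le g\le a_f=\beta(\bar u)$, so $g(t,x)\in\beta(u(t,x))$ again. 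Passing to the limit in the mild formulation \eqref{linear2} then shows that $u$, together with $g_\infty := rq\,g - (a+bu) + f(H(u))$, is a mild solution of \eqref{stab-eq-u-B} in the sense of the preceding definition.

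Finally, globality follows because all the constants in the above a priori estimates depend on $T$ only through Gronwall-type factors and the fixed $L^\infty$ bounds on $\beta_n$, $f$, $r$, $q$; iterating the construction on successive time intervals (or running it directly on each $[0,nT]$) and concatenating, one obtains a solution on $[0,+\infty)$ satisfying the required regularity on every $[0,T]$.
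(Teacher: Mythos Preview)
Your proposal is correct and follows essentially the same strategy as the paper's Section~\ref{sec-proof-wp-B}: regularize $\beta$, apply the Sellers existence theorem (with linear emission, cf.\ Remark~\ref{local budyko}), obtain uniform bounds, extract a convergent subsequence, and identify the weak limit of the coalbedo term on the sets $\{u>\bar u\}$, $\{u<\bar u\}$, $\{u=\bar u\}$ via a.e.\ convergence. The paper differs only in minor tooling---it obtains compactness in $C([0,T];L^2(I))$ via Ascoli--Arzel\`a with direct semigroup estimates on the mild formula rather than Aubin--Lions, and in the identification step first divides by $r(t)q(x)$ on the sets $\{|rq|\ge\kappa\}$ before letting $\kappa\to 0$---but the overall structure is identical.
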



\section{Open questions}
\label{sec-open}

Let us mention some open questions related to this work.
\begin{itemize}

\item Concerning the Sellers type models and the inverse problems results given in Theorems \ref{thm-inverse} and \ref{thm-inv-stab}: the assumption on the support of the kernel $k$ is crucial, since it allows us to easily get rid of the memory term, and we do not know what can be done without this assumption; hence
\begin{itemize}
\item it would be interesting to weaken this support assumption; it seems reasonable to think that uniqueness and stability results could be obtained in a more general context, and on the other hand, memory terms can sometimes generate problems (see, in particular, \cite{Guerrero-Imanuvilov});
\item even under the support assumption: from a numerical point of view, it would be interesting to weaken some assumptions (in particular on $T$, since our proof is based on $T<\delta$) in order to have better estimates even if $\delta$ is small.
\end{itemize}

\item Concerning the Budyko type models: a solution is obtained by regularization and passage to the limit; (note that, at least for an EBM without memory term, uniqueness of the solution depends on the initial condition, see Diaz \cite{Diaz1997}); several questions are mathematically challenging, in particular to obtain inverse problems results in that setting; one way could be to obtain suitable estimates from the regularized problem, succeeding in avoiding to use the $C^1$ norm of the regularized coalbedo
(unfortunately, the $C^1$ norm of the regularized coalbedo appears in our estimates in the Sellers model).

\item Finally, it would be interesting to obtain inverse problems results for other problems, in particular of the quasilinear type mentionned in the beginning of section \ref{sec-plap}.

\end{itemize}


\section{Proof of Theorem \ref{thm-Sellers global existence}}
\label{sec-proof-wp-S}

\subsection{Elements for Theorem \ref{thm-Aanal}} \hfill
\label{pf-thm3.1}

To prove that $(A,D(A))$ is self-adjoint and it generates a strongly continuous semigroup of contractions in $L^2(I)$, one can refer to Theorem 2.8 in \cite{Campiti1998} (where in our case $b \equiv 0$).
	
Moreover,
		\begin{equation*}
			\int_{0}^{x}\frac{ds}{\rho_0 (1-s^2)} = \frac{1}{2 \rho_0}(\log(1+x) - \log(1-x))
		\end{equation*}
which implies that $ \int_{0}^{x} \frac{ds}{\rho(s)} \in L^1(I)$, 
		and then  one can show that the semigroup generated by $A$ is compact (see Theorem 3.3 in \cite{Campiti1998}).
	
		For the analyticity of the semigroup see Theorem 2.12 in \cite{bensoussan1992} or Theorem 3.6.1 in \cite{tanabe1960}.
			
	
\subsection{Proof of Theorem \ref{thm-Sellers global existence}: local existence of mild solutions} \hfill

In this section, we prove the following

\begin{Proposition}
\label{prop-local existence}
Consider $u_0$ such that
$$ u_0 \in C([-\tau,0];V) \quad \text{ and } \quad u_0 (0) \in D(A) .$$
Then there exists $t^* >0$ such that the problem \eqref{integrodifferential} has a unique mild solution on $[0,t^*]$.
\end{Proposition}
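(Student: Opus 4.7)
The plan is to apply Banach's fixed point theorem on a closed ball of $C([-\tau, t^*]; V)$ for $t^*$ sufficiently small, and then upgrade the regularity of the resulting fixed point via maximal $L^2$-regularity for the analytic semigroup $\{e^{tA}\}_{t\ge 0}$. For $t^* > 0$ and $R > \sup_{s \in [-\tau, 0]} \|u_0(s)\|_V$ to be chosen, set
$$
X_{t^*, R} := \Bigl\{ u \in C([-\tau, t^*]; V) : u(s) = u_0(s) \ \forall s \in [-\tau, 0], \ \sup_{t \in [0, t^*]} \|u(t)\|_V \le R \Bigr\},
$$
a complete metric space under the sup norm. Define $\Phi: X_{t^*, R} \to C([-\tau, t^*]; V)$ by $\Phi(u)(s) = u_0(s)$ on $[-\tau, 0]$ and
$$
\Phi(u)(t) = e^{tA} u_0(0) + \int_0^t e^{(t-s)A} \bigl[ G(s, u(s)) + F(u^{(s)}) \bigr] \, ds, \quad t \in [0, t^*].
$$
A fixed point of $\Phi$ precisely satisfies items (i) and (ii) of Definition \ref{mild}, so the task is to produce one.

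Three ingredients drive the estimates. First, by Theorem \ref{thm-Aanal} and Proposition \ref{interpolation}, $A$ is self-adjoint, $\{e^{tA}\}$ is an analytic contraction semigroup on $L^2(I)$, and $V = [D(A), L^2(I)]_{1/2} = D((-A)^{1/2})$ with equivalent norms; functional calculus then gives the stability bound $\|e^{tA}\|_{\mathcal{L}(V)} \le 1$ and the smoothing bound $\|e^{tA} w\|_V \le C t^{-1/2} \|w\|_{L^2(I)}$ for $w \in L^2(I)$. Second, the pointwise inequality $|\,|u|^3 u - |v|^3 v\,| \le 4(|u|^3 + |v|^3) |u - v|$, the boundedness of $\beta, \varepsilon, \varepsilon'$, Hölder's inequality, and the continuous embeddings \eqref{VLp} yield
$$
\|G(t, u)\|_{L^2(I)} \le C(1 + \|u\|_V^4), \qquad \|G(t, u) - G(t, v)\|_{L^2(I)} \le C_R \|u - v\|_V
$$
whenever $\|u\|_V, \|v\|_V \le R$. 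Third, Cauchy--Schwarz combined with $f, f' \in L^\infty$ gives
$$
\|F(v)\|_{L^2(I)} \le \sqrt{|I|}\, \|f\|_\infty, \qquad \|F(v) - F(w)\|_{L^2(I)} \le L \tau \|k\|_\infty \|v - w\|_{C([-\tau, 0]; L^2(I))}.
$$

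The rest is routine. For $u \in X_{t^*, R}$, the smoothing bound produces
$$
\|\Phi(u)(t)\|_V \le \|u_0(0)\|_V + C \int_0^t (t-s)^{-1/2} \bigl( C(1 + R^4) + \sqrt{|I|} \|f\|_\infty \bigr) ds \le \|u_0(0)\|_V + C' \sqrt{t^*},
$$
so $\Phi(X_{t^*, R}) \subset X_{t^*, R}$ once $R$ and $t^*$ are fixed appropriately. For the contraction estimate, note that for $u, v \in X_{t^*, R}$ and $s \in [0, t^*]$ the shifts $u^{(s)}$ and $v^{(s)}$ coincide with $u_0$ on $[-\tau, -s]$, hence $\|u^{(s)} - v^{(s)}\|_{C([-\tau, 0]; L^2(I))} \le C \|u - v\|_{C([0, t^*]; V)}$ by \eqref{VLp}; combined with the local Lipschitz bounds on $G$ and $F$, this yields
$$
\|\Phi(u) - \Phi(v)\|_{C([0, t^*]; V)} \le C_R \sqrt{t^*}\, \|u - v\|_{C([0, t^*]; V)},
$$
a strict contraction for $t^*$ small. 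Banach's theorem delivers a unique fixed point $u \in X_{t^*, R}$. Since the source $g(s) := G(s, u(s)) + F(u^{(s)})$ then lies in $L^\infty(0, t^*; L^2(I)) \subset L^2(0, t^*; L^2(I))$ and $u_0(0) \in D(A) \subset V$ is in the correct trace space, classical maximal $L^2$-regularity for analytic semigroups on a Hilbert space applied to \eqref{linear eq} upgrades $u$ to $H^1(0, t^*; L^2(I)) \cap L^2(0, t^*; D(A))$, completing the verification of Definition \ref{mild}. The main difficulty is handling the superlinear term $|u|^3 u$ simultaneously with the $t^{-1/2}$ singularity of the semigroup smoothing estimate; this is absorbed entirely by the chain of embeddings \eqref{VLp}, which let Hölder's inequality control every power of $u$ in $L^2(I)$ by a power of $\|u\|_V$.
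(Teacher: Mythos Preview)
Your proposal is correct and follows essentially the same approach as the paper: a Banach fixed-point argument on a closed ball of $C([-\tau,t^*];V)$, using the $t^{-1/2}$ smoothing of the analytic semigroup together with the embeddings \eqref{VLp} to control $G$, then upgrading regularity via maximal $L^2$-regularity. The only notable difference is that the paper first shifts to the strictly dissipative $\tilde A = A - I$ and spends a full step verifying that $\Phi(u)\in C([0,t^*];V)$ (using the $(-\tilde A)^{3/4}$ estimate \eqref{dissipative pazy2} to get H\"older continuity in $t$), whereas you assert this map lands in $C([-\tau,t^*];V)$ without proof; this continuity is indeed standard for analytic semigroups with bounded source, but it is the one point in your write-up that deserves a sentence of justification.
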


\subsubsection{The functional setting and main tools} \hfill

It will be more practical to have strictly dissipative operators, so let us consider $\tilde{A}:= A -I$, that satisfies:
		\begin{equation*}
		\begin{cases}
		D(\tilde{A}) = D(A)\\
		\tilde{A}u:= (\rho u_x)_x - u, & u \in D(\tilde{A}) :
		\end{cases}
		\end{equation*}
integrating by parts:
		\begin{equation*}
\forall u \in D(\tilde{A}), \quad 		(\tilde{A}u,u)_{L^2(I)} = - \int_{I} (\rho u_x ^2 + u^2)dx \leq -||u||^2_{L^2(I)}  ,
		\end{equation*}
hence $\tilde{A}$ is strictly dissipative.
We will use the following estimates: using Pazy \cite{pazy1983semigroups} (Theorem 6.13, p. 74) with $\alpha = 1/2$
and $\alpha = 3/4$, there exists $c>0$ such that
		\begin{equation}
	\forall t>0, \quad	|||(-\tilde{A})^{1/2} e^{t \tilde{A}}|||_{\mathcal L (L^2(I))} \leq \frac{c}{\sqrt{t}},
		\label{dissipative pazy}
		\end{equation}
and 
		\begin{equation}
	\forall t>0, \quad			|||(-\tilde{A})^{3/4} e^{t \tilde{A}}|||_{\mathcal L (L^2(I))} \leq \frac{c}{t ^{3/4}} .
		\label{dissipative pazy2}
		\end{equation}

Now, consider also $\tilde{G}(t,u):=u + G(t,u)$ so that, adding and substracting $u$ to the equation, the problem \eqref{integrodifferential} is equivalent to
		\begin{equation}
		\begin{cases}
		\dot{u}(t) = \tilde{A}u(t) + \tilde{G}(t,u(t)) + F (u^{(t)} ) \\
		u(0)=u_0(0) .
		\end{cases}
		\end{equation}
By the definitions above, we know that a mild solution of this problem is a function such that
		\begin{equation*}
		u(t)=
		\begin{cases}
		e^{t\tilde{A}}u_0(0) + \int_{0}^{t}e^{(t-s)\tilde{A}}\left[\tilde{G}(s,u(s))+ F (u^{(s)} ) \right] \, ds, &t>0\\
		u_0(t), &t \in [-\tau,0]
		\end{cases}
		\end{equation*}
Then we consider the suitable functional setting:
\begin{itemize}
\item the space of functions
\begin{equation*}
		\mathbb{X}_R:=\bigl \{ v \in C([-\tau,t^*];V) \ | \ 
		\begin{cases}
		\Vert v(t)\Vert _V \leq R \mbox{ } \forall t \in [-\tau,t^*] , \\
		 v(t)=u_0(t) \mbox{ } \forall t \in [-\tau,0]  , 
		 \end{cases}
		 \bigr \} ,
		\end{equation*}
		\item and the associated application 
		$$ \Gamma: \mathbb{X}_R \subset C([-\tau,t^*];V) \rightarrow C([-\tau,t^*];V) $$
		defined by
		\begin{equation*}
\Gamma (u)(t) :=
		\begin{cases}
		e^{t\tilde{A}}u_0(0) + \int_{0}^{t}e^{(t-s)\tilde{A}}\left[ \tilde{G}(s,u(s))+ F( u^{(s)} ) \right] \, ds, &t\in [0,t^*] \\
		u_0(t), &t \in [-\tau,0] .
		\end{cases}
		\end{equation*}
\end{itemize}
In the following, we prove that $\Gamma$ maps $\mathbb{X}_R$ into itself and is a contraction if the parameters
$R$ and $t^*$ are well-chosen. Then the Banach-Caccioppoli fixed point theorem will tell us that $\Gamma$ has a fixed point,
and this fixed point will be a mild solution of our problem.

We will use the following properties of the function $G$:

	\begin{Lemma} (\cite{TORT2012683}, Lemma 3.4)
\label{G well defined}
Take $R>0$.	Then $G$ is well defined on $[0,T] \times V$ with values into $L^2(I)$. Moreover, we have the following estimates:
\begin{itemize}
\item[(i)] there exists $C_R >0$ such that 
\begin{equation}
\label{G borne u}
\begin{cases}
	||G(t,u)||_{L^2(I)} \leq C_R (1+ ||u||_V ), \\
	\forall t \in [0,T], \forall u\in V \text{ s.t. } \Vert u \Vert _V \leq R  ,
\end{cases}
\end{equation}
and 
\begin{equation}
\label{G lip u}
\begin{cases}
	||G(t,u) - G(t,v)||_{L^2(I)} \leq C_R||u - v||_V , \\
	\forall t \in [0,T], \forall u, v \in V \text{ s.t. } \Vert u \Vert _V , \Vert v \Vert _V \leq R  ;
\end{cases}
\end{equation}

			\item[(ii)] there exists $C>0$ such that
\begin{equation}
\label{G lip t}
\begin{cases}
	||G(t,u) - G(t',u)||_{L^2(I)} \leq C|t - t'| , \\
	\forall t,t' \in [0,T] , \forall u \in V \text{ s.t. } \Vert u \Vert _V \leq R .
\end{cases}
\end{equation}
\end{itemize}
\end{Lemma}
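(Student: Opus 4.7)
The plan is to verify the three assertions directly from the definition
$G(t,u)(x)=r(t)q(x)\beta(u(x))-\varepsilon(u(x))|u(x)|^{3}u(x)$,
exploiting the continuous embeddings $V\hookrightarrow L^{p}(I)$ for every $p\in[1,\infty)$ stated in \eqref{VLp} together with the boundedness hypotheses \eqref{hyp-q-r}, \eqref{beta-Sellers}, and \eqref{epsilon-Sellers}. The first move is to split $G=G_{1}+G_{2}$ with $G_{1}(t,u):=r(t)q(x)\beta(u)$ and $G_{2}(u):=-\varepsilon(u)|u|^{3}u$. The summand $G_{1}$ already lives in $L^{\infty}(I)\subset L^{2}(I)$ with norm controlled by $\|r\|_{\infty}\|q\|_{\infty}\|\beta\|_{\infty}$, while for $G_{2}$ the pointwise bound $|\varepsilon(u)|u|^{3}u|\le\|\varepsilon\|_{\infty}|u|^{4}$ gives $\|G_{2}(u)\|_{L^{2}}\le\|\varepsilon\|_{\infty}\|u\|_{L^{8}}^{4}$, which is finite thanks to the embedding $V\hookrightarrow L^{8}(I)$. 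For $\|u\|_{V}\le R$ this delivers well-definedness and \eqref{G borne u}.

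For the $u$-Lipschitz bound \eqref{G lip u}, I would write $G(t,u)-G(t,v)=r(t)q(x)[\beta(u)-\beta(v)]+[\Phi(v)-\Phi(u)]$ with $\Phi(s):=\varepsilon(s)|s|^{3}s$. The $\beta$-piece is immediate from $|\beta(u)-\beta(v)|\le\|\beta'\|_{\infty}|u-v|$ and the continuity of $V\hookrightarrow L^{2}(I)$. For the $\Phi$-piece, a direct computation yields $\Phi'(s)=\varepsilon'(s)|s|^{3}s+4\varepsilon(s)|s|^{2}s$, so $|\Phi'(s)|\le C(|s|^{4}+|s|^{3})$ and, by the mean value theorem, $|\Phi(u)-\Phi(v)|\le C(|u|^{4}+|v|^{4}+1)|u-v|$. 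I would then control $\|(|u|^{4}+|v|^{4}+1)(u-v)\|_{L^{2}}$ by H\"older with conjugate exponents $(2,2)$, placing the first factor in $L^{4}(I)$ (so each of $|u|^{4},|v|^{4}$ in $L^{16}$) and the difference in $L^{4}(I)$; both norms are controlled by $V$-norms by \eqref{VLp}, yielding $\|G(t,u)-G(t,v)\|_{L^{2}}\le C_{R}\|u-v\|_{V}$ when $\|u\|_{V},\|v\|_{V}\le R$.

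For the $t$-Lipschitz bound \eqref{G lip t}, the difference simplifies to $G(t,u)-G(t',u)=[r(t)-r(t')]q(x)\beta(u(x))$ because $G_{2}$ is time-independent. Using $r\in C^{1}(\mathbb{R}_{+})$ with $r'\in L^{\infty}$ gives $|r(t)-r(t')|\le\|r'\|_{\infty}|t-t'|$, and the remaining factor sits in $L^{\infty}(I)\subset L^{2}(I)$ with norm bounded by $\|q\|_{\infty}\|\beta\|_{\infty}|I|^{1/2}$, independently of $u$ and of the radius $R$; this explains why the constant in \eqref{G lip t} is called $C$ rather than $C_{R}$.

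The only mildly delicate point is the polynomial term $\Phi(u)-\Phi(v)$: since $V$ does \emph{not} embed into $L^{\infty}(I)$, I must choose H\"older exponents so that both factors sit in some $L^{p}(I)$ with $p<\infty$, and then absorb the powers of $\|u\|_{V},\|v\|_{V}$ into the constant $C_{R}$. Once this bookkeeping is in place, the remainder of the proof is a routine chain of Lipschitz estimates and continuous embeddings.
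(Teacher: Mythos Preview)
Your argument is correct and follows exactly the route the paper intends: the paper's entire proof is the one-line remark ``These results come directly from \eqref{VLp}'' together with a citation of \cite{TORT2012683}, and what you have written is precisely the unpacking of that sentence via the embeddings $V\hookrightarrow L^{p}(I)$. One cosmetic slip: the derivative of $s\mapsto|s|^{3}s$ is $4|s|^{3}$, not $4|s|^{2}s$, so $\Phi'(s)=\varepsilon'(s)|s|^{3}s+4\varepsilon(s)|s|^{3}$; since $\bigl||s|^{2}s\bigr|=|s|^{3}$ this does not affect your bound $|\Phi'(s)|\le C(|s|^{4}+|s|^{3})$ or anything downstream.
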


These results come directly from \eqref{VLp}.

Concerning the memory term, we have a similar result:
\begin{Lemma}
\label{F well defined}
Take $T>0$.	Then 
$$ \mathcal F: [0,T] \times C([-\tau,T]; L^2 (I)) \to L^2(I), 
\quad \mathcal F (t,u) (x) := f \Bigl( \int _{-\tau} ^0 k(s,x) \, u(t+s) (x) \, ds  \Bigr) $$
 is well defined. Moreover, we have the following estimates:
\begin{itemize}
\item[(i)] there exists $C >0$ such that
\begin{equation}
\label{F borne u}
\begin{cases}
	|| \mathcal F (t,u) ||_{L^2(I)} \leq C \Vert f \Vert _\infty , \\
	\forall t \in [0,T], \forall u\in C([-\tau,T], L^2(I)) ,
\end{cases}
\end{equation}
and 
\begin{equation}
\label{F lip u}
\begin{cases}
	|| \mathcal F(t, u)  -\mathcal  F(t, v) ||_{L^2(I)} \leq C ||u - v||_{C([-\tau,T], L^2(I))} , \\
	\forall t \in [0,T], \forall u, v \in C([-\tau,T], L^2(I))  ;
\end{cases}
\end{equation}

			\item[(ii)] there exists $C>0$ such that
\begin{equation}
\label{F lip t}
\begin{cases}
	||\mathcal F(t,u) - \mathcal F(t',u)||_{L^2(I)} \leq C || u^{(t)} - u^{(t')} ||_{C([-\tau,0], L^2(I))} , \\
	\forall t,t' \in [0,T], \forall u \in C([-\tau,T], L^2(I))  .
\end{cases}
\end{equation}
\end{itemize}
\end{Lemma}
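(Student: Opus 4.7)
The plan is to exploit the three structural assumptions that make the functional $\mathcal F$ well behaved: the kernel $k$ is continuous on the compact rectangle $[-\tau,0]\times[-1,1]$ (hence bounded, say by $\Vert k\Vert_\infty$), the nonlinearity $f$ is globally bounded and $L$-Lipschitz, and the space domain $I=(-1,1)$ has finite measure. Well-definedness follows first: for any $u\in C([-\tau,T];L^2(I))$ and fixed $t\in[0,T]$, the map $s\mapsto k(s,\cdot)\,u(t+s)$ is continuous from $[-\tau,0]$ to $L^2(I)$, so the Bochner integral $w(x):=\int_{-\tau}^0 k(s,x)\,u(t+s)(x)\,ds$ defines an element of $L^2(I)$. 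Since $f\in L^\infty(\mathbb R)$ is continuous, the pointwise composition $f\circ w$ lies in $L^\infty(I)\subset L^2(I)$, and the crude bound $\Vert\mathcal F(t,u)\Vert_{L^2(I)}\leq\sqrt{2}\,\Vert f\Vert_\infty$ gives estimate \eqref{F borne u} at once.

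For the Lipschitz estimate \eqref{F lip u}, I would fix $t\in[0,T]$ and $u,v\in C([-\tau,T];L^2(I))$, and use the $L$-Lipschitz character of $f$ to write
\begin{equation*}
|\mathcal F(t,u)(x)-\mathcal F(t,v)(x)|\leq L\Bigl|\int_{-\tau}^{0}k(s,x)\,\bigl(u(t+s)(x)-v(t+s)(x)\bigr)\,ds\Bigr|.
\end{equation*}
A Cauchy--Schwarz in $s$ followed by squaring and integrating in $x$ on $I$, together with Fubini, yields
\begin{equation*}
\Vert\mathcal F(t,u)-\mathcal F(t,v)\Vert_{L^2(I)}^{2}\leq L^{2}\tau\Vert k\Vert_\infty^{2}\int_{-\tau}^{0}\Vert u(t+s)-v(t+s)\Vert_{L^2(I)}^{2}\,ds,
\end{equation*}
and bounding the last integral by $\tau\Vert u-v\Vert_{C([-\tau,T];L^2(I))}^{2}$ gives the announced Lipschitz constant $C=L\,\tau\,\Vert k\Vert_\infty$.

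The time-continuity estimate \eqref{F lip t} follows by the very same manipulation, but applied to the difference $u(t+s)-u(t'+s)=u^{(t)}(s)-u^{(t')}(s)$: using the Lipschitz property of $f$, then Cauchy--Schwarz in $s$ and Fubini, one obtains
\begin{equation*}
\Vert\mathcal F(t,u)-\mathcal F(t',u)\Vert_{L^2(I)}\leq L\sqrt{\tau}\,\Vert k\Vert_\infty\,\Vert u^{(t)}-u^{(t')}\Vert_{C([-\tau,0];L^2(I))},
\end{equation*}
which is the claim. I do not expect any real obstacle: the proof is entirely routine once one notes that the exchange of the spatial integral over $I$ and the temporal integral over $[-\tau,0]$ is legitimate (Fubini applies since the integrand is measurable and the combined domain has finite measure). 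The only mildly delicate point is realising that, because $u\in C([-\tau,T];L^2(I))$, the inner integral should be interpreted as a Bochner integral in $L^2(I)$ before the pointwise composition with $f$ is taken, so that all manipulations reduce to standard inequalities on measurable functions.
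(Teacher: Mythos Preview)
Your argument is correct and complete. The paper itself does not supply a proof of this lemma: it merely introduces it with ``Concerning the memory term, we have a similar result'' after the analogous Lemma for $G$, and leaves the verification to the reader. Your write-up therefore provides exactly the routine details the authors chose to omit, and follows the natural line (boundedness of $f$ for \eqref{F borne u}, Lipschitz continuity of $f$ combined with Cauchy--Schwarz in $s$ and Fubini for \eqref{F lip u} and \eqref{F lip t}). One cosmetic remark: in your final displayed estimate for part~(ii) the constant should be $L\,\tau\,\Vert k\Vert_\infty$ rather than $L\sqrt{\tau}\,\Vert k\Vert_\infty$, by the same computation you carried out for \eqref{F lip u}; this does not affect the statement, which only asserts the existence of some $C>0$.
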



\subsubsection{Step 1: $\Gamma$ maps $\mathbb{X}_R$ into itself if $t^* >0$ is sufficiently small} \hfill

We recall that
\begin{multline*}
\forall t\in [0,t^*], \quad \Gamma (u)(t) =
		e^{t\tilde{A}}u_0(0) + \int_{0}^{t}e^{(t-s)\tilde{A}}\left[ \tilde{G}(s,u(s))+ F( u^{(s)} ) \right] \, ds
		\\
= e^{t\tilde{A}}u_0(0) + \int_{0}^{t}e^{(t-s)\tilde{A}}\left[ \tilde{G}(s,u(s))+ \mathcal F (s,u) \right] \, ds .
\end{multline*}
Denote 
$$ U_1 (t) := e^{t\tilde{A}}u_0(0) ,$$
$$ U_2 (t) :=  \int_{0}^{t}e^{(t-s)\tilde{A}} \tilde{G}(s,u(s)) \, ds ,$$
$$ U_3 (t) :=  \int_{0}^{t}e^{(t-s)\tilde{A}} \mathcal F (s,u) \, ds .$$
Then we claim that
\begin{equation}
\label{uiC}
U_1, U_2, U_3 \in C([0,t^*]; V) , \quad U_1 (0)=u_0(0), \quad U_2 (0)=0 = U_3 (0) .
\end{equation}
Indeed: from standard regularity arguments, it is clear that $U_1(t), U_2 (t), U_3 (t) \in V$ for all $t\in (0,t^*]$.
Moreover, $U_1 (0)=u_0(0)$, $U_2 (0)=0 = U_3 (0)$. It remains to show the continuity. If $t,t+h \in [0,t^*]$, then
\begin{itemize}
\item first 
\begin{multline*} 
\Vert U_1 (t+h) - U_1 (t) \Vert _V 
= \Big \Vert (-\tilde A)^{1/2} (e^{(t+h)\tilde{A}} - e^{t\tilde{A}}) u_0 (0) \Big \Vert _{L^2 (I)} 
\\
= \Big \Vert  (e^{(t+h)\tilde{A}} - e^{t\tilde{A}}) \bigl( (-\tilde A)^{1/2} u_0 (0) \bigr) \Big \Vert _{L^2 (I)}
\\
= \Big \Vert  \Bigl( \int _t ^{t+h} \tilde A e^{\sigma \tilde A} \, d\sigma  \Bigr)  \bigl( (-\tilde A)^{1/2} u_0 (0) \bigr) \Big \Vert _{L^2 (I)}
\\
= \Big \Vert  \Bigl( \int _t ^{t+h} (-\tilde A) ^{3/4} e^{\sigma \tilde A} \, d\sigma  \Bigr)  \bigl( (-\tilde A)^{3/4} u_0 (0) \bigr) \Big \Vert _{L^2 (I)}
\\
\\
\leq  \Bigl( \int _t ^{t+h} \frac{c}{\sigma ^{3/4}} \, d\sigma  \Bigr)  \Vert (-\tilde A)^{3/4} u_0 (0) \Vert _{L^2 (I)}
\\
\leq c \bigl( (t+h) ^{1/4} - t ^{1/4} \bigr) \Vert  u_0 (0) \Vert _{D(A)}
%
\end{multline*}
which gives that $U_1 \in C([0,t^*]; V)$;

\item next, assume that $h>0$, to simplify; then we have
\begin{multline*} 
\Vert U_2 (t+h) - U_2 (t) \Vert _V 
\\
= \Big \Vert (-\tilde A)^{1/2} \Bigl( \int_{0}^{t+h}e^{(t+h-s)\tilde{A}} \tilde{G}(s,u(s)) \, ds - \int_{0}^{t}e^{(t-s)\tilde{A}} \tilde{G}(s,u(s)) \, ds \Bigr) \Big \Vert _{L^2 (I)} 
\\
= \Big \Vert (-\tilde A)^{1/2} (e^{h\tilde{A}} - Id ) \int_{0}^{t}e^{(t-s)\tilde{A}} \tilde{G}(s,u(s)) \, ds
\\
+ (-\tilde A)^{1/2} \int_{t}^{t+h}e^{(t+h-s)\tilde{A}} \tilde{G}(s,u(s)) \, ds \Big \Vert _{L^2 (I)}
\\
\leq \Big \Vert  (e^{h\tilde{A}} - Id ) \int_{0}^{t} (-\tilde A)^{1/2} e^{(t-s)\tilde{A}} \tilde{G}(s,u(s)) \, ds \Big \Vert _{L^2 (I)}
\\
+ \Big \Vert \int_{t}^{t+h} (-\tilde A)^{1/2} e^{(t+h-s)\tilde{A}} \tilde{G}(s,u(s)) \, ds \Big \Vert _{L^2 (I)}
;
\end{multline*}
then, using once again \eqref{dissipative pazy2}, we have
\begin{multline*}
\Big \Vert  (e^{h\tilde{A}} - Id ) \int_{0}^{t} (-\tilde A)^{1/2} e^{(t-s)\tilde{A}} \tilde{G}(s,u(s)) \, ds \Big \Vert _{L^2 (I)}
\\
= \Big \Vert  \Bigl( \int _0 ^{h} \tilde A e^{\tau \tilde A} \, d\tau \Bigr) \int_{0}^{t} (-\tilde A)^{1/2} e^{(t-s)\tilde{A}} \tilde{G}(s,u(s)) \, ds \Big \Vert _{L^2 (I)}
\\
= \Big \Vert  \Bigl( \int _0 ^{h} (-\tilde A)^{3/4} e^{\tau \tilde A} \, d\tau \Bigr) \Bigl( \int _0 ^{t} (-\tilde A)^{3/4} e^{(t-s)\tilde A} \tilde G(s,u(s)) \, ds \Bigr) \Big \Vert _{L^2 (I)}
\\
\leq C \Bigl( \int _0 ^{h} \frac{1}{\tau ^{3/4}} \, d\tau \Bigr)
 \Bigl( \int _0 ^{t} \frac{1}{(t-s) ^{3/4}} C_R (1+R) \, ds \Bigr)
\\
 = 16C C_R (1+R) h ^{1/4} t^{1/4} ;
\end{multline*}
in the same way, using \eqref{dissipative pazy} we have
\begin{multline*}
\Vert \int_{t}^{t+h} (-\tilde A)^{1/2} e^{(t+h-s)\tilde{A}} \tilde{G}(s,u(s)) \, ds \Vert _{L^2 (I)}
\\
\leq \int_{t}^{t+h} \frac{c}{\sqrt{t+h-s}} \Vert \tilde{G}(s,u(s)) \Vert _{L^2 (I)} \, ds
\\
\leq C_R (1+ R) \int_{t}^{t+h} \frac{c}{\sqrt{t+h-s}} \, ds
= C_R (1+ R) O (\sqrt{\vert h \vert }) ;
\end{multline*}
hence
$$
\Vert U_2 (t+h) - U_2 (t) \Vert _V 
\leq C_R (1+ R) O (\vert h \vert ^{1/4}) ,
$$
which gives that $u_2 \in C([0,t^*]; V)$;

\item finally, using \eqref{dissipative pazy}, \eqref{dissipative pazy2} and \eqref{F borne u} we have (still assuming that $h>0$, in order to simplify)
\begin{multline*} 
\Vert U_3 (t+h) - U_3 (t) \Vert _V 
\\
=\Big \Vert (-\tilde A)^{1/2} \Bigl( \int_{0}^{t+h}e^{(t+h-s)\tilde{A}} \mathcal F (s,u)  \, ds - \int_{0}^{t}e^{(t-s)\tilde{A}} \mathcal F (s,u)  \, ds \Bigr) \Big \Vert _{L^2 (I)} 
\\
\leq \Big \Vert  (e^{h\tilde{A}} - Id ) \int_{0}^{t} (-\tilde A)^{1/2} e^{(t-s)\tilde{A}} \mathcal F (s,u)  \, ds \Big \Vert _{L^2 (I)}
\\
+ \Big \Vert \int_{t}^{t+h} (-\tilde A)^{1/2} e^{(t+h-s)\tilde{A}} \mathcal F (s,u)  \, ds \Big \Vert _{L^2 (I)}
\\
\leq \Vert f \Vert _\infty O (\vert h \vert ^{1/4}) ,
\end{multline*}
which gives that $U_3 \in C([0,t^*]; V)$.

\end{itemize}
We conclude that $\Gamma (u) \in C([0,t^*]; V)$, and since $\Gamma (u) (0^+) = \Gamma (u) (0^-)$, we have that $\Gamma (u) \in C([-\tau,t^*]; V)$.

From the previous study, we also have that
\begin{multline*} 
\forall t \in [0,t^*] , \quad \Vert \Gamma u(t) \Vert _V 
\leq \Vert U_1(t) \Vert _V + \Vert  U_2(t) \Vert _V  + \Vert U_3(t) \Vert _V 
\\
\leq  \Vert u_0 (0) \Vert _V + \int _0 ^t \frac{c}{\sqrt{t-s}} \bigl(  C_R (1+R) + \Vert f \Vert _\infty \bigr) \, ds
\\
= \Vert u_0 (0) \Vert _V + \bigl(  C_R (1+R) + \Vert f \Vert _\infty \bigr) O (\sqrt{t^*}) .
\end{multline*}
Choose, e.g., 
$$ R = \Vert u_0  \Vert _{C[-\tau,0];V)} + 1 ;$$
then choosing $t^*$ small enough, we have that
$$ \forall t \in [-\tau,t^*] , \quad \Vert \Gamma u(t) \Vert _V \leq R ,$$
hence $\mathbb{X}_R$ is stable under the action of $\Gamma$ if $t>0$ is small enough.
		

\subsubsection{Step 2: $\Gamma$ is a contraction if $t^* \in (0,1)$ is sufficiently small} \hfill

Of course,
$$ \forall u,v \in \mathbb{X}_R, \forall t\in [-\tau,0], \quad \Gamma (u) (t) = \Gamma (v) (t) .$$
So we study the difference $\Gamma (u) (t) - \Gamma (v) (t)$ for $t\in [0,t^*]$. As we did previously, we have

\begin{multline*}
\forall t\in [0,t^*], \quad \Gamma (u)(t) - \Gamma (v)(t) 
= \int_{0}^{t}e^{(t-s)\tilde{A}}\left[ \tilde{G}(s,u(s)) - \tilde{G}(s,v(s)) \right] \, ds
\\
+ \int_{0}^{t}e^{(t-s)\tilde{A}}\left[  \mathcal F (s,u) - \mathcal F (s,v) \right] \, ds .
\end{multline*}
Then, using \eqref{dissipative pazy} and \eqref{G lip u}, we have
\begin{multline*}
\Big \Vert \int_{0}^{t}e^{(t-s)\tilde{A}}\left[ \tilde{G}(s,u(s)) - \tilde{G}(s,v(s)) \right] \, ds \Big \Vert _V
\\
= \Big \Vert \int_{0}^{t} (-\tilde A) ^{1/2} e^{(t-s)\tilde{A}} \left[ \tilde{G}(s,u(s)) - \tilde{G}(s,v(s)) \right] \, ds \Big \Vert _{L^2(I)}
\\
\leq \int_{0}^{t} \frac{c}{\sqrt{t-s}} C_R \Vert u(s)-v(s) \Vert _V \, ds 
\\
\leq C_R \Vert u-v \Vert _{C([-\tau,T];V)} \int_{0}^{t} \frac{c}{\sqrt{t-s}}  \, ds 
\\
\leq  2 c C_R \sqrt{t^*} \Vert u-v \Vert _{C([-\tau,T];V)} .
\end{multline*}
And using \eqref{dissipative pazy} and \eqref{F lip u}, we have
\begin{multline*}
\Big \Vert \int_{0}^{t}e^{(t-s)\tilde{A}}\left[  \mathcal F (s,u) - \mathcal F (s,v)  \right] \, ds \Big \Vert _V
\\
= \Big \Vert \int_{0}^{t} (-\tilde A) ^{1/2} e^{(t-s)\tilde{A}} \left[  \mathcal F (s,u) - \mathcal F (s,v)  \right] \, ds \Big \Vert _{L^2(I)}
\\
\leq \int_{0}^{t} \frac{c}{\sqrt{t-s}} C \Vert u-v \Vert _{C([-\tau,T];V)} \, ds 
\\
\leq  2 c C_R \sqrt{t^*} \Vert u-v \Vert _{C([-\tau,T];V)} .
\end{multline*}
We obtain that
$$
\forall t\in [0,t^*], \quad \Vert \Gamma (u)(t) - \Gamma (v)(t) \Vert _V \leq 4 c C_R \sqrt{t^*} \Vert u-v \Vert _{C([-\tau,T];V)} ,$$
hence $\Gamma$ is a contraction if $t^*$ is small enough.


\subsubsection{Step 3: Additional regularity of the solution and conclusion of the proof of Proposition \ref{prop-local existence}} \hfill

		Let us recall that $u \in C([-\tau,t^*];V)$ and
		\begin{equation*}
		u(t) = e^{t\tilde{A}}u_0(0) 
		+ \int_{0}^{t}e^{(t-s)\tilde{A}}\tilde{G}(s,u(s))ds
		+ \int_{0}^{t}e^{(t-s)\tilde{A}} \mathcal F (s,u) \, ds .
		\end{equation*}
		Let us remark that
$$
		t \mapsto \int_{0}^{t}e^{(t-s)\tilde{A}}\tilde{G}(s,u(s))ds \in H^1(0,t^*;L^2(I)) \cap L^2(0,t^*;D(A))
$$
and
$$
t \mapsto \int_{0}^{t}e^{(t-s)\tilde{A}}\mathcal F (s,u) \, ds \in H^1(0,t^*;L^2(I)) \cap L^2(0,t^*;D(A))
$$
using standard regularity results.
%
		Then we can conclude that 
		$$u \in H^1(0,t^*;L^2(I)) \cap L^2(0,t^*;D(A)) \cap C([-\tau,t^*];V) .$$
		 This concludes the proof of Proposition \ref{prop-local existence}. \qed


\subsection{Proof of Theorem \ref{thm-Sellers global existence}: uniqueness} \hfill

In this section we prove the following

\begin{Proposition}
\label{prop-unicite}
Given $T_0>0$ and $u_0 \in C([-\tau,0];V)$, assume that $u$ and $\tilde u$ are mild solutions of the problem \eqref{integrodifferential} on  $[0,T_0]$. Then $u=\tilde u$ on $[0,T_0]$.
\end{Proposition}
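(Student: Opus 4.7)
The plan is to reduce the question to a singular Volterra inequality for $w := u - \tilde u$ and close it by a monotone-envelope argument iterated on finitely many subintervals.

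First I would set $R := \max\bigl(\|u\|_{C([-\tau,T_0];V)},\ \|\tilde u\|_{C([-\tau,T_0];V)}\bigr)$, which is finite by Definition \ref{mild}. Recasting the problem in strictly dissipative form with $\tilde A = A - I$, both $u$ and $\tilde u$ satisfy \eqref{nonlinear eq} (with $A,G$ replaced by $\tilde A, \tilde G$) with the same initial datum $u_0$, so subtracting gives
$$w(t) = \int_0^t e^{(t-s)\tilde A}\bigl[\tilde G(s,u(s)) - \tilde G(s,\tilde u(s))\bigr]\, ds + \int_0^t e^{(t-s)\tilde A}\bigl[\mathcal F(s,u) - \mathcal F(s,\tilde u)\bigr]\, ds.$$
Applying $(-\tilde A)^{1/2}$ (so that the $L^2$-norm of the result controls the $V$-norm, via Proposition \ref{interpolation}), and combining the smoothing bound \eqref{dissipative pazy} with the locally Lipschitz estimate \eqref{G lip u} and the Lipschitz estimate \eqref{F lip u} for the memory nonlinearity, I would obtain a constant $K = K(R) > 0$ such that
$$\|w(t)\|_V \le K \int_0^t \frac{1}{\sqrt{t-s}}\, \|w\|_{C([-\tau,s];V)}\, ds, \qquad t \in [0,T_0].$$
Since $u\equiv\tilde u$ on $[-\tau,0]$, the quantity on the right equals $\sup_{\sigma \in [0,s]}\|w(\sigma)\|_V$.

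Next, I would close the inequality using the monotone envelope $\Phi(t) := \sup_{\sigma\in[0,t]}\|w(\sigma)\|_V$. Since $\Phi$ is nondecreasing, for every $\tau\in[0,t]$
$$\|w(\tau)\|_V \le K \int_0^\tau \frac{\Phi(s)}{\sqrt{\tau-s}}\, ds \le K\Phi(t)\cdot 2\sqrt{\tau} \le 2K\sqrt{t}\,\Phi(t).$$
Taking the supremum in $\tau\in[0,t]$ gives $\Phi(t) \le 2K\sqrt{t}\,\Phi(t)$, so choosing $t_* := 1/(16K^2)$ forces $\Phi(t_*) = 0$, i.e.\ $u\equiv\tilde u$ on $[-\tau,t_*]$.

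Finally I would iterate the argument on successive intervals $[n t_*, (n+1)t_*]$ until $[0,T_0]$ is covered. The point is that, once $u=\tilde u$ on $[-\tau, nt_*]$, the difference $\mathcal F(s,u) - \mathcal F(s,\tilde u)$ for $s\in[nt_*,(n+1)t_*]$ depends only on the restriction of $w$ to $[nt_*,s]$ (the kernel integrates $w(s+\sigma)$ for $\sigma\in[-\tau,0]$, and $w$ vanishes on the earlier part of that window), so the Lipschitz constant and hence $K(R)$ do not change. After finitely many such steps, $u\equiv\tilde u$ on $[0,T_0]$. The only point that requires care is checking that $K(R)$ is truly stable under the iteration so that the step length $t_*$ remains fixed; this is what makes the finite cover of $[0,T_0]$ work.
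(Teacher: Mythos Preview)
Your argument is correct. It is, however, a genuinely different route from the one the paper takes. The paper works directly at the PDE level: it multiplies the equation for $w=u-\tilde u$ by $w$, integrates over $(0,T)\times I$, and derives an energy inequality for $W(T)=\int_0^T\|w(t)\|_{L^2(I)}^2\,dt$; the memory term is handled by rewriting $\int_0^T\|H-\tilde H\|_{L^2}^2$ as a double time integral and bounding it by $cT\,W(T)$, after which a single application of Gronwall's lemma yields $W\equiv 0$ on the whole interval $[0,T_0]$ at once. By contrast, you stay with the mild formulation and the analytic-semigroup smoothing estimate \eqref{dissipative pazy}, obtain a singular Volterra inequality in the $V$-norm, and close it by a supremum (contraction-type) bound on a short interval, then iterate. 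Your approach reuses exactly the machinery of the local existence proof (Lemmas~\ref{G well defined} and~\ref{F well defined}) and needs only the $V$-bound $R$ on $u,\tilde u$, so the Lipschitz constant $K=K(R)$ and hence the step $t_*$ are indeed fixed throughout the iteration; the caveat you flag is harmless. The paper's energy method is more global (no iteration) but, as written, relies on a pointwise bound $|G(t,u)-G(t,\tilde u)|\le c|u-\tilde u|$ that implicitly uses an $L^\infty$ control on $u,\tilde u$; your argument avoids this by working with the $V$-Lipschitz estimate \eqref{G lip u} instead.
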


\begin{proof}
Consider $w=u-\tilde u$. Then $w$ solves
$$ \begin{cases}
w_t - (\rho w_x)_x = G(t,u) - G(t,\tilde u) + f(H) - f(\tilde H) , \quad t\in (0,T_0), x\in (-1,1) ,\\
\rho w_x = 0 , \quad t\in (0,T_0), x = \pm 1 , \\
w(s,x) = 0 , \quad s\in [-\tau,0], x \in (-1,1) .
\end{cases}$$
Take $T \in (0,T_0)$. Multiplying the first equation by $w$ and integrating on $(0,T)\times (-1,1)$, we obtain
$$ \int _0 ^T \int _{-1} ^1 w\bigl(w_t - (\rho w_x)_x \bigr) = \int _0 ^T \int _{-1} ^1 w\bigl(G(t,u) - G(t,\tilde u) + f(H) - f(\tilde H)\bigr) .$$
Integrating by parts, we have
$$ \int _0 ^T \int _{-1} ^1 w w_t \, dx \, dt = \frac{1}{2} \Vert w(T) \Vert _{L^2 (-1,1)} ^2 ,$$
$$ \int _0 ^T \int _{-1} ^1 w(- (\rho w_x)_x)) \, dx \, dt \geq 0 ,$$
$$ \int _0 ^T \int _{-1} ^1 w\bigl(G(t,u) - G(t,\tilde u)\bigr) \, dx \, dt
\leq c \int _0 ^T \int _{-1} ^1 w(u-\tilde u)\, dx \, dt  = c \int _0 ^T \Vert w(t) \Vert _{L^2 (-1,1)} ^2 \, dt ,$$
\begin{multline*}
\int _0 ^T \int _{-1} ^1 w\bigl(f(H) - f(\tilde H)\bigr) \, dx \, dt
\leq c \int _0 ^T \int _{-1} ^1 \vert w \vert \vert H - \tilde H \vert \, dx \, dt
\\
\leq c \int _0 ^T \Vert w(t) \Vert _{L^2 (-1,1)} ^2 \, dt + c \int _0 ^T \int _{-1} ^1 \vert H - \tilde H \vert ^2 \, dx \, dt .
\end{multline*}
Let us introduce 
$$ W (T') := \int _0 ^{T'} \Vert w(T) \Vert _{L^2 (-1,1)} ^2 \, dT .$$
Using the previous estimates, we have
\begin{equation}
\label{uni-1}
\frac{1}{2} W' (T) \leq 2c W(T) + c \int _0 ^T \int _{-1} ^1 \vert H - \tilde H \vert ^2 \, dx \, dt .
\end{equation}
Concerning the last term:
\begin{multline*}
\int _0 ^T \int _{-1} ^1 \vert H - \tilde H \vert ^2 \, dx \, dt
= \int _0 ^T \int _{-1} ^1 \Big \vert \int _{-\tau} ^0 k(s,x) (u(t+s,x)- \tilde u (t+s,x)) \, ds  \Big \vert ^2 \, dx \, dt
\\
\leq c \int _0 ^T \int _{-1} ^1  \Bigl( \int _{-\tau} ^0 (u(t+s,x)- \tilde u (t+s,x))^2 \, ds \Bigr) \, dx \, dt ;
\end{multline*}
but
\begin{multline*}
 \int _0 ^T \int _{-1} ^1  \Bigl( \int _{-\tau} ^0 (u(t+s,x)- \tilde u (t+s,x))^2 \, ds \Bigr) \, dx \, dt
\\
= \int _0 ^T \int _{-\tau} ^0 \Vert w(t+s) \Vert _{L^2 (-1,1)} ^2 \, ds \, dt 
=  \int _0 ^T \int _{t-\tau} ^t \Vert w(\sigma) \Vert _{L^2 (-1,1)} ^2 \, d\sigma \, dt .
\end{multline*}
Note that the initial condition of $w$ gives us that
$$ t-\tau \leq 0 \quad \implies \quad  \int _{t-\tau} ^t \Vert w(\sigma) \Vert _{L^2 (-1,1)} ^2 \, d\sigma
=  \int _0 ^t \Vert w(\sigma) \Vert _{L^2 (-1,1)} ^2 \, d\sigma ,$$
and of course, 
$$ t-\tau \geq 0 \quad \implies \quad  \int _{t-\tau} ^t \Vert w(\sigma) \Vert _{L^2 (-1,1)} ^2 \, d\sigma
\leq  \int _0 ^t \Vert w(\sigma) \Vert _{L^2 (-1,1)} ^2 \, d\sigma .$$
Hence
$$ \int _0 ^T \int _{t-\tau} ^t \Vert w(\sigma) \Vert _{L^2 (-1,1)} ^2 \, d\sigma \, dt
\leq  \int _0 ^T \int _{0} ^t \Vert w(\sigma) \Vert _{L^2 (-1,1)} ^2 \, d\sigma \, dt ,$$
which gives that
$$ \int _0 ^T \int _{-1} ^1 \vert H - \tilde H \vert ^2 \, dx \, dt
\leq c \int _0 ^T \int _{0} ^t \Vert w(\sigma) \Vert _{L^2 (-1,1)} ^2 \, d\sigma \, dt 
= c \int _0 ^T  W(t) \, dt .$$
Since $W$ is nondecreasing, we obtain that
\begin{equation}
\label{uni-2}
\int _0 ^T \int _{-1} ^1 \vert H - \tilde H \vert ^2 \, dx \, dt \leq cT W(T) ,
\end{equation}
and then we are in position to conclude: we deduce from \eqref{uni-1} and \eqref{uni-2} that
$$ \frac{1}{2} W' (T) \leq (2c + c'T) W(T) .$$
Finally, integrating with respect to $T \in (0,T')$, and using that $W(0)=0$, we obtain that
$$ W(T') \leq 2 \int _0 ^{T'} (2c + c'T) W(T) \, dT .$$
Then Gronwall's lemma tells us that $W=0$, and then $W'=0$, which gives that $w=0$. This concludes the proof of Proposition \ref{prop-unicite}. 

\end{proof}


\subsection{Proof of Theorem \ref{thm-Sellers global existence}: the maximal solution is global in time} \hfill
\label{global}

Proposition \ref{prop-unicite} is a standard uniqueness result. As a consequence, combining it with the local existence result given in Proposition \ref{prop-local existence}, we are able to define the maximal existence time:
\begin{equation}
\label{T*(u_0)}
T^* (u_0) := \sup \{T \geq 0 \text{ s.t.  \eqref{integrodifferential} has a mild solution on  $[0,T]$} \},
\end{equation}
Then, for all $T< T^* (u_0)$, we have a mild solution $u_T$ on $[0,T]$, and 
$$0 < T < T' < T^* (u_0) \quad \implies \quad u _ T = u _{T'} \text{ on } [0,T] ,$$
and this allows us to define the associated maximal solution, defined exactly on $[0, T^*(u_0))$.
It remains to prove the global existence of the maximal solution:

\begin{Proposition}
\label{global existence sellers}
Consider $u_0 \in C([-\tau,0];V)$ and such that $u_0(0) \in D(A) \cap L^\infty (I)$, and the associated maximal mild solution,
defined in $[0,T^* (u_0))$. Then $T^* (u_0)=+\infty$. 
\end{Proposition}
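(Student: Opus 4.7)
The plan is to argue by contradiction. Suppose $T^* := T^*(u_0) < +\infty$. I will establish uniform a priori bounds on $u$ in the $V$-norm on $[0,T^*)$; combined with the local existence result (Proposition \ref{prop-local existence}), these bounds will allow restarting the equation from a time $t_0 < T^*$ sufficiently close to $T^*$ and extending the solution past $T^*$, contradicting \eqref{T*(u_0)}.

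The cornerstone is an $L^\infty$ bound. The regularity $u \in H^1(0,T; L^2(I)) \cap L^2(0,T; D(A))$ ensures that \eqref{stab-eq-u} holds almost everywhere. Choose $M \geq \Vert u_0(0) \Vert_{L^\infty(I)}$ large enough that
$$\varepsilon_1 M^4 \geq \Vert r \Vert_\infty \Vert q \Vert_\infty \Vert \beta \Vert_\infty + \Vert f \Vert_\infty.$$
A Stampacchia-type argument with test function $(u-M)^+ \in V$, combined with the boundary behaviour built into $D(A)$ and the nonnegativity of the diffusive contribution $\int_I \rho u_x^2 \mathbf{1}_{\{u>M\}}\, dx$, yields $\frac{d}{dt}\Vert (u-M)^+ \Vert_{L^2(I)}^2 \leq 0$, since on $\{u>M\}$ the choice of $M$ forces $r q \beta(u) - \varepsilon(u)|u|^3 u + f(H) \leq 0$. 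Hence $u \leq M$; a symmetric argument with $(-M-u)^+$ gives $u \geq -M$. Consequently $\Vert u(t) \Vert_{L^\infty(I)} \leq M$ uniformly on $[0,T^*)$.

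Armed with this bound, $G(t,u(t))$ is bounded in $L^2(I)$ by a constant $C'$ depending only on $M$ and the data, while $\Vert F(u^{(t)}) \Vert_{L^2(I)} \leq \sqrt{2}\, \Vert f \Vert_\infty$. Plugging into the mild formulation (written via $\tilde A = A - I$) and invoking the analytic estimate \eqref{dissipative pazy} exactly as in Step 1 of the proof of Proposition \ref{prop-local existence}, one obtains
$$\Vert u(t) \Vert_V \leq \Vert u_0(0) \Vert_V + 2c\sqrt{T^*}\bigl(C' + \sqrt{2}\,\Vert f \Vert_\infty\bigr) =: C_4, \quad t \in [0,T^*),$$
from which, combined with $u_0 \in C([-\tau,0];V)$, a uniform bound $R_0$ follows on $\Vert u^{(t)} \Vert_{C([-\tau,0]; V)}$ for every $t \in [0,T^*)$.

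To conclude, I pick $t_0 \in (0,T^*)$ with $u(t_0) \in D(A)$ (possible for a.e.\ $t_0$ thanks to $u \in L^2(0,T^*;D(A))$) and with $T^* - t_0$ arbitrarily small. Applying Proposition \ref{prop-local existence} to the shifted history $\tilde u_0(s) := u(t_0+s)$, $s \in [-\tau,0]$: inspecting its proof, the local existence time produced there depends only on $R := \Vert \tilde u_0 \Vert_{C([-\tau,0];V)} + 1$ and on the data, and hence is bounded below by a positive constant $t^\sharp$ depending only on $R_0$ and the data. Choosing $t_0$ with $T^* - t_0 < t^\sharp$ and gluing via uniqueness (Proposition \ref{prop-unicite}) produces a mild solution on $[0, t_0 + t^\sharp]$ with $t_0 + t^\sharp > T^*$, contradicting \eqref{T*(u_0)}. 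The main obstacle is making the Stampacchia argument rigorous in the degenerate framework — verifying that $(u-M)^+ \in V$ is an admissible test function and that the degeneracy $\rho(\pm 1) = 0$ together with the boundary condition encoded in $D(A)$ is compatible with the required integration by parts — which is handled by the weighted Sobolev calculus of \cite{Campiti1998}.
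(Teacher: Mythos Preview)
Your proof is correct but follows a different route from the paper. Both arguments begin with the same Stampacchia-type $L^\infty$ bound (which the paper states separately as Theorem~\ref{maximum}). After that, the paper proves directly that $t\mapsto u(t)$ satisfies the Cauchy criterion in $V$ as $t\to T^*(u_0)$: decomposing $u(t)=U_1(t)+U_2(t)+U_3(t)$ as in the local existence proof, it uses the analytic semigroup estimates \eqref{dissipative pazy}--\eqref{dissipative pazy2} together with the $L^\infty$ bound (which makes $\tilde G(s,u(s))$ uniformly bounded in $L^2$) to obtain $\Vert u(t)-u(t')\Vert_V = O(|t-t'|^{1/4})$, hence a limit exists in $V$, contradicting maximality. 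Your approach instead converts the $L^\infty$ bound into a uniform $V$-bound on $[0,T^*)$ via the mild formulation, and then exploits the observation---correct from inspecting Steps~1 and~2 of the proof of Proposition~\ref{prop-local existence}---that the local existence time $t^*$ depends only on $R=\Vert u_0\Vert_{C([-\tau,0];V)}+1$, not on $\Vert u_0(0)\Vert_{D(A)}$. Restarting from a suitable $t_0$ with $u(t_0)\in D(A)$ then overshoots $T^*$. Your argument is the more modular of the two and makes explicit the ``uniform local time on bounded sets'' mechanism; the paper's Cauchy-criterion argument is more self-contained and yields the slightly stronger quantitative information that $u$ is H\"older continuous with values in $V$ up to $T^*$.
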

Proposition \ref{global existence sellers} allows us to conclude the proof of  Theorem \ref{thm-Sellers global existence}.
So it remains to prove Proposition \ref{global existence sellers}, and for this we begin by proving a boundedness property.

\subsubsection{$L^\infty$ bound of the solution} \hfill

	First of all, let us prove a general and useful boundedness property. Following the proofs developed by \cite{TORT2012683}, without the memory term, we first state a known preliminary result (see Lemma 6.1 in \cite{TORT2012683}):
	\begin{Lemma}
		Let $u \in V$. Then, for all $M$, $(u-M)^+:= \sup (u-M,0) \in V$ and $(u+M)^-:= \sup (-(u+M),0) \in V$. Moreover for a.e. $x \in I$
		\begin{equation}
		((u-M)^+)_x (x) =
		\begin{cases}
		u_x (x) & (u-M)(x) >0\\
		0 & (u-M)(x) \leq 0 
		\end{cases}
		\label{65}
		\end{equation}
		and for a.e. $x \in I$
		\begin{equation*}
		((u+M)^-)_x(x) =
		\begin{cases}
		0 & (u+M)(x) >0\\
		-u_x (x) & (u+M)(x) \leq 0 
		\end{cases}
		\label{66}
		\end{equation*}
	\end{Lemma}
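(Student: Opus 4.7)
The plan is to recognize this as a classical Stampacchia-type truncation result, and simply adapt it to the weighted Sobolev space $V$. Since the ambient Sobolev regularity is $AC_{\mathrm{loc}}(I)$ with only the weighted derivative $\sqrt{\rho}\, u_x$ in $L^2(I)$, the usual Stampacchia argument goes through unchanged as soon as we check that membership in $V$ is preserved by composition with the Lipschitz map $s \mapsto s^+$.

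First I would verify $L^2$ membership: since $I$ has finite measure, $M \in L^2(I)$, so $(u-M)^{+} \in L^2(I)$ because $0\leq (u-M)^+ \leq |u| + |M|$. Next, I would verify $AC_{\mathrm{loc}}$: on any compact subinterval $J \Subset I$, the map $s \mapsto s^{+}$ is $1$-Lipschitz, so $(u-M)^{+}$ is absolutely continuous on $J$ as a composition of an absolutely continuous function with a Lipschitz function. The symmetric arguments handle $(u+M)^{-}$.

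The main step is the pointwise derivative formula, which I would obtain by the standard Stampacchia chain rule. One clean route is to approximate the positive part by the convex family
\[
\phi_{\varepsilon}(s) := \begin{cases} 0 & s \leq 0, \\ s^{2}/(2\varepsilon) & 0 < s < \varepsilon, \\ s - \varepsilon/2 & s \geq \varepsilon, \end{cases}
\]
so that $\phi_{\varepsilon} \in C^{1}(\mathbb{R})$ with $0 \leq \phi_{\varepsilon}' \leq 1$ and $\phi_{\varepsilon}(s) \to s^{+}$, $\phi_{\varepsilon}'(s) \to \chi_{\{s>0\}}(s)$ pointwise as $\varepsilon \to 0^{+}$. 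Applying the usual chain rule for $AC_{\mathrm{loc}}$ functions gives $(\phi_{\varepsilon}(u-M))_{x} = \phi_{\varepsilon}'(u-M)\, u_{x}$ a.e. Passing to the limit pointwise (and invoking Stampacchia's lemma, which ensures $u_{x} = 0$ a.e.\ on $\{u = M\}$) yields formula \eqref{65}; the formula for $((u+M)^{-})_{x}$ follows either by the identical argument or by noting $(u+M)^{-} = (-u - M)^{+}$.

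Finally, I would check $\sqrt{\rho}\,((u-M)^{+})_{x} \in L^{2}(I)$. By the formula just obtained,
\[
|\sqrt{\rho(x)}\, ((u-M)^{+})_{x}(x)| = |\sqrt{\rho(x)}\, u_{x}(x)|\, \chi_{\{u > M\}}(x) \leq |\sqrt{\rho(x)}\, u_{x}(x)| \quad \text{a.e. } x \in I,
\]
and the right-hand side lies in $L^{2}(I)$ because $u \in V$. Hence $(u-M)^{+} \in V$; the same bound applied to $-u-M$ gives $(u+M)^{-} \in V$. I do not expect any serious obstacle: the only subtle point is making sure the a.e.\ derivative formula is correctly interpreted on the coincidence set $\{u = M\}$, which is handled by Stampacchia's lemma and is the reason both cases of \eqref{65} agree there.
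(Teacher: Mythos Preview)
Your argument is correct and is exactly the standard Stampacchia truncation proof adapted to the weighted space $V$. Note, however, that the paper does not actually prove this lemma: it is stated as a known preliminary result and simply referred to Lemma~6.1 in \cite{TORT2012683}. So there is no ``paper's own proof'' to compare against; your write-up supplies precisely the argument that the citation is pointing to, and nothing in it is problematic.
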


	Then we can prove the theorem below, where we just add the memory term to the proof of Theorem 3.3 in \cite{TORT2012683}:
	\begin{Theorem}
		Consider $u_0 \in C([-\tau,0];V)$ and $u_0(0) \in D(A) \cap L^\infty (I)$, $T>0$ and $u$ a mild solution of \eqref{integrodifferential} defined on $[0,T]$.
%
Let us denote
$$ M_1 := \left( \frac{||q||_{L^\infty (I)} ||r||_{L^\infty (\mathbb{R})}||\beta||_{L^\infty (\mathbb{R})}+||f||_{L^\infty(\mathbb{R})} }{\varepsilon_1} \right)^{\frac{1}{4}}$$
and 
$$
			M:= \max \{ ||u_0(0)||_{L^\infty (I)}, M_1\}
$$
Then $u$ satisfies
		\begin{equation}
			||u||_{L^\infty((0,T) \times I)} \leq M.
		\end{equation}
		\label{maximum}
	\end{Theorem}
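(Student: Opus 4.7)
The plan is to adapt the classical Stampacchia truncation argument used in \cite{TORT2012683} (Theorem 3.3), testing the equation against the truncated test function $(u-M)^{+}$ and exploiting the strong dissipativity of the Stefan--Boltzmann term $-\varepsilon(u)|u|^{3}u$, which grows like $u^{4}$ and therefore eventually dominates the uniformly bounded reaction terms $r(t)q(x)\beta(u)$ and $f(H)$.

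\textbf{Step 1 (Choice of test function).} Since $u$ is a mild solution, $u(t)\in V$ for a.e.\ $t$, so by the preceding truncation lemma $(u(t)-M)^{+}\in V$ with $((u-M)^{+})_{x}=u_{x}\,\chi_{\{u>M\}}$. Moreover $u\in H^{1}(0,T;L^{2}(I))\cap L^{2}(0,T;D(A))$, so the map $t\mapsto \tfrac{1}{2}\|(u(t)-M)^{+}\|_{L^{2}(I)}^{2}$ is absolutely continuous with derivative
$$
\int_{I} u_{t}(t,x)\,(u(t,x)-M)^{+}\,dx.
$$

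\textbf{Step 2 (Equation and integration by parts).} For a.e.\ $t\in[0,T]$ the identity
$$
u_{t}=(\rho u_{x})_{x}+r(t)q(x)\beta(u)-\varepsilon(u)|u|^{3}u+f(H)
$$
holds in $L^{2}(I)$. Multiplying by $(u-M)^{+}$ and integrating in $x$, I would use that $u(t)\in D(A)$ implies $\rho u_{x}\in H^{1}(I)$ with $\rho u_{x}(\pm 1)=0$ (recall the remark after \eqref{operator}); integration by parts then yields no boundary contribution and
$$
\int_{I}(\rho u_{x})_{x}(u-M)^{+}\,dx=-\int_{I}\rho\,\bigl(((u-M)^{+})_{x}\bigr)^{2}\,dx\le 0.
$$

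\textbf{Step 3 (Sign of the reaction term on $\{u>M\}$).} By \eqref{hyp-q-r}, \eqref{beta-Sellers}, \eqref{hyp-f} and \eqref{epsilon-Sellers}, on the set $\{u>M\}$ one has
$$
r(t)q(x)\beta(u)+f(H)\;\le\;\|q\|_{L^{\infty}}\|r\|_{L^{\infty}}\|\beta\|_{L^{\infty}}+\|f\|_{L^{\infty}},
$$
while $\varepsilon(u)|u|^{3}u=\varepsilon(u)u^{4}\ge \varepsilon_{1}M^{4}\ge \varepsilon_{1}M_{1}^{4}=\|q\|_{L^{\infty}}\|r\|_{L^{\infty}}\|\beta\|_{L^{\infty}}+\|f\|_{L^{\infty}}$ by the very definitions of $M_{1}$ and $M$. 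Hence the pointwise integrand $\bigl(r q\beta(u)-\varepsilon(u)|u|^{3}u+f(H)\bigr)(u-M)^{+}$ is $\le 0$ a.e.

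\textbf{Step 4 (Conclusion).} Combining Steps 1--3 gives
$$
\frac{d}{dt}\,\tfrac{1}{2}\|(u(t)-M)^{+}\|_{L^{2}(I)}^{2}\le 0.
$$
Since $M\ge \|u_{0}(0)\|_{L^{\infty}(I)}$, one has $(u_{0}(0)-M)^{+}\equiv 0$, so $(u(t)-M)^{+}\equiv 0$ for all $t\in[0,T]$, i.e.\ $u\le M$ a.e. The symmetric argument with $(u+M)^{-}$, using that on $\{u<-M\}$ we have $\varepsilon(u)|u|^{3}u=-\varepsilon(u)|u|^{4}\le -\varepsilon_{1}M^{4}$, yields $u\ge -M$ a.e.

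\textbf{Main obstacle.} The argument is essentially routine once the correct test function is identified; the only delicate point is justifying the integration by parts in the degenerate diffusion, which relies on the boundary behaviour built into $D(A)$ (namely $\rho u_{x}(\pm 1)=0$) together with the truncation lemma to ensure $(u-M)^{+}\in V$. The memory term creates no new difficulty: since $|f(H)|\le \|f\|_{L^{\infty}}$ independently of $u$ and of the history, it is absorbed into the constant $M_{1}$ exactly as the bounded insolation term is.
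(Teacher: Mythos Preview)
Your proof is correct and follows essentially the same Stampacchia truncation argument as the paper: testing against $(u-M)^{+}$, using the sign of the reaction term on $\{u>M\}$ thanks to $\varepsilon(u)u^{4}\ge \varepsilon_{1}M_{1}^{4}$, and invoking the initial condition to conclude. If anything, your write-up is slightly more careful than the paper's in justifying the integration by parts via the boundary condition encoded in $D(A)$ and in noting the absolute continuity of $t\mapsto \|(u(t)-M)^{+}\|_{L^{2}}^{2}$.
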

	\begin{proof}
		Let us set $\mathcal{B}:=\{x \in I: u(t,x) > M \}$ and multiply the equation satisfied by $u$ by $(u-M)^+$, then we get the equation below using the previous Lemma and the boundary conditions satisfied by $u$.
		\begin{align*}
		\int_{I} u_t (u-M)^+ dx &+ \int_{I} \rho (((u-M)^+)^2)_x dx
		\\  
		&= \int_{I}[Q\beta(u) - R_e(u)+F(u^{(t)})](u-M)^+ dx\\
		& = \int_{\mathcal{B}}[Q\beta(u) - R_e(u)+F(u^{(t)})](u-M) dx .
		\end{align*}
		Moreover, for $ x \in \mathcal{B} $,
		\begin{equation}
		Q\beta(u) - \varepsilon(u)u|u|^3 +F(u^{(t)}) \leq ||Q||_{\infty} ||\beta||_{\infty} -\varepsilon_1 M^4 + ||f||_{\infty} \leq 0
		\label{leq 0}
		\end{equation}
		thanks to our definition of $M$ and to the assumptions on $F$.
		Then
		\begin{equation*}
		\frac{1}{2} \frac{d}{dt} \int_{I} |(u-M)^+|^2 dx = \int_{I} u_t (u-M)^+ dx \leq 0.	
		\end{equation*}
		for all $t \in [0,T]$.
		Therefore $t \mapsto ||(u-M)^+(t)||^2_{L^2(I)}$ is nonincreasing on $[0,T]$. Since $(u_0(0) - M)^+ \equiv 0 $, we obtain that $u(t,x) \leq M$ for all $t \in [0,T]$ and for a.e. $x \in I$.
		
		In the same way, we can multiply the equation \eqref{complete eq} by $(u+M)^-$ and then we obtain
		\begin{equation*}
		\frac{d}{dt} \int_{I} |(u+M)^-|^2 dx \leq 0.
		\end{equation*} 
		Finally, since $(u_0(0)+M)^- \equiv 0$ we have that $u(t,x) \geq -M$ for all $t \in [0,T]$ and a.e. $x \in I$.
	\end{proof}


\subsubsection{Proof of Proposition \ref{global existence sellers}.} \hfill

	From the previous theorem one may deduce that, for any $u_0(0) \in D(A) \cap L^\infty (I)$, the $L^\infty$-norm of the solution remains bounded on $[0,T^*(u_0))$. To ensure the global existence of the mild solution for the Sellers-type model, we argue by contradiction: we are going to prove that, if $T^* (u_0) < + \infty$, then $t\mapsto u(t)$ can be extended up to $T^*(u_0)$ (and then further), which will be in contradiction with the maximality of $T^* (u_0)$. 

Let us assume that $T^* (u_0) < +\infty$. Note that since $u_0(0) \in  D(A) \cap L^\infty (I)$, Theorem \ref{maximum} implies that $||u||_{L^\infty((0,T^*(u_0)) \times I)} \leq M$. It follows that
		\begin{equation*}
		R_e(u)= \varepsilon(u)u |u|^3 \leq ||\varepsilon|| _{L^\infty(\mathbb{R})} M^4 =: C
		\end{equation*}
		
		It remains to prove that there exists $\lim\limits_{t \uparrow T^*(u_0)} u(t)$ in $V$.
		To prove this, we prove that the function $t\mapsto u(t)$ satisfies the Cauchy criterion. 
		By definition of mild solution, we have
		$$ u(t) = U_1(t) + U_2(t) + U_3(t) $$
		with
		\begin{align*}
		&U_1(t):=e^{t \tilde A}u_0(0) ,\\
		&U_2(t):=\int_{0}^{t}e^{(t-s)\tilde A} \tilde G(s,u(s))ds ,\\
		&U_3(t):= \int_{0}^{t}e^{(t-s)A} \mathcal F (s,u) \, ds .
		\end{align*}
Now, first, $U_1$ has a limit in $V$: $U_1(t) \rightarrow U_1(t^*)$ as $t \rightarrow t^*$ since it is a semigroup applied to the initial value.
		
		
Next, let us prove that $U_2$ has also a limit in $V$ as $t\to t^*$: if $t'\leq t < t^*$, we have
\begin{multline*}
\Vert U_2 (t) - U_2 (t') \Vert _V
= \Big \Vert \int _0 ^t e^{(t-s) \tilde A} \tilde G(s,u(s)) \, ds 
- \int _0 ^{t'} e^{(t'-s)\tilde A} \tilde G(s,u(s)) \, ds \Big \Vert _V 
\\
\leq \Big \Vert \int _0 ^{t'} e^{(t-s)\tilde A} \tilde G(s,u(s)) - e^{(t'-s)\tilde A} \tilde G(s,u(s)) \, ds \Big \Vert _V
+ \Big  \Vert \int _{t'} ^t e^{(t-s)\tilde A} \tilde G(s,u(s)) \, ds \Big \Vert _V .
\end{multline*}
We study these two last terms, proving that they satisfy the Cauchy criterion, hence both will have a limit in $V$:
\begin{itemize}
\item first the last one: using Theorem \ref{maximum}, we have
for all $s \in [0,T^* (u_0))$:
\begin{multline*}
\Vert \tilde G(s,u(s))\Vert _{L^2(I)} 
= \Vert r(s) q(x)\beta(u) - \varepsilon(u)u|u|^3 \Vert  _{L^2(I)}
\\
\leq 2 \Bigl( \Vert r \Vert _{\infty} \Vert q \Vert _{\infty} \Vert \beta \Vert _{\infty} + \Vert \varepsilon \Vert _{\infty} M^4 \Bigr) ,
\end{multline*}
hence
\begin{multline*} 
\Big \Vert \int _{t'} ^t e^{(t-s)\tilde A} \tilde G(s,u(s)) \, ds \Big \Vert _V
= \Big \Vert (-\tilde A)^{1/2} \int _{t'} ^t e^{(t-s)\tilde A} \tilde G(s,u(s)) \, ds \Big  \Vert _{L^2 (I)}
\\
\leq \int _{t'} ^t \Big \Vert (-\tilde A)^{1/2} e^{(t-s)\tilde A} \tilde G(s,u(s)) \Big \Vert _{L^2 (I)} \, ds 
\leq \int _{t'} ^t \frac{C}{\sqrt{t-s}} \, ds = 2 C \sqrt{t-t'} ;
\end{multline*}
\item for the other term:
\begin{multline*}
\Big \Vert \int _0 ^{t'} e^{(t-s)A} G(s,u(s)) - e^{(t'-s)A} G(s,u(s)) \, ds \Big \Vert _V
\\
= \Big \Vert (e^{(t-t')A} - Id) \int _0 ^{t'} e^{(t'-s)A} G(s,u(s)) \, ds \Big \Vert _V
\\
= \Big \Vert (-A)^{1/2} (e^{(t-t')A} - Id) \int _0 ^{t'} e^{(t'-s)A} G(s,u(s)) \, ds \Big \Vert _{L^2 (I)}
\\ 
= \Big \Vert  (e^{(t-t')A} - Id) \int _0 ^{t'} (-A)^{1/2} e^{(t'-s)A} G(s,u(s)) \, ds \Big \Vert _{L^2 (I)}
\\
= \Big \Vert  \Bigl( \int _0 ^{t-t'} A e^{\tau A} \, d\tau \Bigr) \Bigl( \int _0 ^{t'} (-A)^{1/2} e^{(t'-s)A} G(s,u(s)) \, ds \Bigr) \Big \Vert _{L^2 (I)}
\\
= \Big \Vert  \Bigl( \int _0 ^{t-t'} (-A)^{3/4} e^{\tau A} \, d\tau \Bigr) \Bigl( \int _0 ^{t'} (-A)^{3/4} e^{(t'-s)A} G(s,u(s)) \, ds \Bigr) \Big \Vert _{L^2 (I)} ;
\end{multline*}
using \eqref{dissipative pazy2} and the fact that $G$ is bounded, we obtain that
\begin{multline*}
\Big \Vert  \Bigl( \int _0 ^{t-t'} (-A)^{3/4} e^{\tau A} \, d\tau \Bigr) \Bigl( \int _0 ^{t'} (-A)^{3/4} e^{(t'-s)A} G(s,u(s)) \, ds \Bigr) \Big \Vert _{L^2 (I)} 
\\
\leq C \Bigl( \int _0 ^{t-t'} \frac{1}{\tau ^{3/4}} \, d\tau \Bigr)
 \Bigl( \int _0 ^{t'} \frac{1}{(t'-s) ^{3/4}} \, ds \Bigr)
 = C' (t-t') ^{1/4} (t')^{1/4} .
\end{multline*}
\end{itemize}
From these two estimates, we deduce that $t\mapsto U_2 (t)$ satisfies the Cauchy criterion and has a limit as $t\to T^* (u_0)$ if $T^* (u_0) < +\infty$. 

In the same way, $t\mapsto U_3 (t)$ satisfies the Cauchy criterion as $t\to T^* (u_0)$. It follows that $t\mapsto u(t)$ has a limit as $t \rightarrow T^* (u_0)$ if $T^* (u_0) < +\infty$, which contradicts the maximality of $T^* (u_0)$, hence $T^* (u_0) = +\infty$. \qed

\begin{Remark}{\rm
	If we assume that $R_e$ is linear as in the Budyko models, i.e. $R_e(u) = A + Bu$, 
	everything remains true, and  we can apply the fixed point theorem and extend the solution to a global one.}
	\label{local budyko}
\end{Remark}


	\section{Proof of Theorem \ref{thm-inverse}}
	\label{inverse problem}

		
Without loss of generality, we can assume that 
		\begin{equation*}
 		0 < T < \delta  .
		\label{to extend}
		\end{equation*}

Using the extra assumption \eqref{hp delta}, we see that the history term satisfies
		\begin{multline*}
\forall t\in (0,T), \quad		H(t,x,u) 
		= \int_{-\tau}^{0} k(s,x)u(t+s,x) ds
		= \int_{-\tau}^{-\delta} k(s,x)u(t+s,x) ds
		\\
		= \int_{t-\tau}^{t-\delta} k(\sigma-t,x)u(\sigma,x) d\sigma
		= \int_{t-\tau}^{t-\delta} k(\sigma-t,x)u_0(\sigma,x) d\sigma = H(t,x,u_0) ,
		\end{multline*}
		where we used \eqref{initial condition} and that $[t-\tau,t-\delta] \subset [-\tau,0]$ since $t \leq T< \delta$.
		
Hence during this small interval of time, the memory term depends only on the initial condition, hence
		\begin{equation}
		H(t,x,u) = H(t,x,u_0) = H(t,x,\tilde{u}) .
		\label{memory equal}
		\end{equation}
		Let us set $v:=u-\tilde{u}$.
		
\subsection{Step 1: the linear problem satisfied by $v$} \hfill

Substracting the problem \eqref{complete eq} satisfied by $u$ with the one satisfied by $\tilde{u}$, we obtain that the function $v$ verifies
		\begin{equation}
v_t - (\rho (x) v_x)_x = r(t)q(x)\beta(u)-r(t)\tilde{q}(x)\beta(\tilde{u})-(\varepsilon(u)|u|^3u-\varepsilon(\tilde{u})|\tilde{u}|^3\tilde{u})
		\label{eq v}
		\end{equation}
		for all $t \in (0,T)$ and $x \in (-1,1)$.
We linearize \eqref{eq v} thanks to the regularity of $\beta$ (in Sellers type models) and of $\varepsilon$, defining
		\begin{align*}
		\mu_1(u,\tilde u)&:=
		\begin{cases}
		\dfrac{\varepsilon(u)|u|^3u-\varepsilon(\tilde{u})|\tilde{u}|^3\tilde{u}}{u-\tilde{u}} & u \neq \tilde{u}\\
		\frac{\partial}{\partial u}(\varepsilon(u)|u|^3u) & u = \tilde{u}
		\end{cases}\\
		\intertext{and}
		\mu_2(u,\tilde u)&:=
		\begin{cases}
		\dfrac{\beta (u) - \beta(\tilde{u})}{u-\tilde{u}} & u \neq \tilde{u}\\
		\frac{\partial}{\partial u}(\beta(u)) & u = \tilde{u}
		\end{cases} .
		\end{align*}
		Let us add and substract $r(t)\tilde{q}(x)\beta(u) $ and then replace $\mu_1$ and $\mu_2$ in \eqref{eq v}, so we obtain the following linear equation with respect to $v$:
		\begin{equation}
v_t - (\rho (x) v_x)_x = r(t)\tilde{q}(x)\mu_2(u,\tilde u) v - \mu_1(u,\tilde u) v + r(t)\beta(u)(q(x)-\tilde{q}(x)) .
		\label{linear}
		\end{equation}


\subsection{Step 2: $q=\tilde q$ on $(-1,x_0)$} \hfill

We define the largest interval $[y_1,x_0]$ where $q \equiv \tilde{q} $ and we want to prove that $y_1 = -1$.
		
		Let us set
		\begin{equation*}
		\mathcal{A}^-:=\{x \leq x_0 : (q - \tilde{q})(y) \equiv 0 \quad \forall y \in [x,x_0] \}
		\end{equation*}
If $\mathcal{A}^- \neq \emptyset$, we consider 
		$$y_1:= \inf \mathcal{A}^- ,$$
and if $\mathcal{A}^- = \emptyset$, we consider 
		$$y_1:=  x_0,$$
so that in any case, we know that if $y_1 > -1$, and if $\eta >0$ is such that $y_1 - \eta >-1$, then there exists $y_2 \in (y_1-\eta,y_1)$ such that $q(y_2) \neq \tilde q (y_2)$.

		To show that $y_1 = -1$, we argue by contradiction, so let us assume that $y_1>-1$.
		
	 	\textit{STEP 2.1} First of all we want to prove that there exists $ t_1 \in [0,T) $ and $y_2 \in (-1,y_1) $ such that
		$v(t,y_2)$ never vanishes on $(0,t_1) $.
Since $q, \tilde{q} \in \mathcal{M}$, we have that $q-\tilde{q} \in \mathcal{M}$. It follows that there exists $y'_1<y_1$ such that $q-\tilde{q}$ is analytic on $[y'_1,y_1]$, hence is constantly equal to zero or has only a finite number of zeros. Since we already noted that the definition of $y_1$ implies that $q-\tilde q$ cannot be constantly equal to zero on some interval $(y_1 - \eta, y_1]$ (with $\eta >0$), then $q-\tilde{q}$ has only a finite number of zeros in $[y'_1,y_1]$, and this implies that 
		\begin{equation}
		\exists \mbox{ } y_2 \in (y'_1,y_1) \mbox{ s.t } (q-\tilde{q})(x) \neq 0 \text{ for all } x \in [y_2,y_1) .
		\label{rho primo}
		\end{equation}
Without loss of generality, we can assume that
		\begin{equation}
		(q-\tilde{q})(x)>0 \quad \forall x \in [y_2,y_1) .
		\label{bigger than 0}
		\end{equation}
		Let us notice that since $u$ and $\tilde{u}$ have the same initial condition, we have that $v(0,x)=0$ for all $x \in [-1,1]$ and that $v_x(0,x)=0$. Using this remark and computing \eqref{linear} at $t = 0$ and $x = y_2$,
we obtain
		\begin{equation*}
		v_t (0,y_2) = r(0)\beta(u_0)(q-\tilde{q})(y_2) ,
		\end{equation*}
and then \eqref{bigger than 0} implies that $v_t (0,y_2)>0$. Therefore, since $v(0,y_2) = 0$, we have that there exists some time $t_1 \in (0,T)$ such that
		\begin{equation*}
		v(t,y_2)>0 \quad \forall t \in (0,t_1) .
		\end{equation*}
		Moreover, the assumption \eqref{assumption u} implies that $v(t,x_0) =0$ for all $t \in (0,T)$.
		
		\textit{STEP 2.2} Using the strong maximum principle and the Hopf's Lemma, we are going to prove that the assumption $y_1>-1$ leads to a contradiction.
		Consider
		\begin{align*}
		 K:= \max_{t \in [0,t_1], x \in [y_2,x_0]} -\mu_1 (u(t,x), \tilde u (t,x)) + r(t) \tilde{q}(x)  \mu_2 (u(t,x), \tilde u (t,x)) :
		\end{align*}
$K$ is chosen so that
$$ R(t,x):= -\mu_1 (u(t,x), \tilde u (t,x)) + r(t) \tilde{q} (x) \mu_2 (u(t,x), \tilde u (t,x)) - K \leq 0 .$$
Let also define
$$ w(t,x):=v(t,x)e^{-Kt} .$$
		Using \eqref{linear}, we observe that
$$		w_t - (\rho(x) w_x)_x - R(t,x) w 
		= r(t)\beta(u)(q(x)-\tilde{q}(x)) \, e^{-Kt} .
$$
Since 
$$ \forall x\in [y_2,x_0], \quad q(x)-\tilde{q}(x) \geq 0,$$
we obtain that $w$ satisfies
		\begin{equation*}
		\begin{cases}
		w_t - (\rho(x) w_x)_x - R(t,x)w  \geq 0 &(t,x) \in (0,t_1) \times (y_2,x_0) , \\
		w(0,x)=0 & x \in [y_2,x_0] , \\
		w(t,x_0) = 0 & t \in (0,t_1) , \\
		w(t,y_2) > 0 & t \in (0,t_1)
		\end{cases}
		\label{eq w}
		\end{equation*}
		where the second condition follows from the initial conditions of \eqref{complete eq}, the third from the assumption \eqref{assumption u}, and the last from Step 2.1.
		
		Let us notice that, since $ [y_2,x_0] \subset (-1,1) $, we can apply the strong maximum principle (Chapter 3 of \cite{protter2012maximum}).
		It implies that
		\begin{equation}
		w(t,x)>0 \qquad \forall (t,x) \in (0,t_1) \times (y_2,x_0) .
		\label{property w}
		\end{equation}
Moreover, since $w(t,x_0)=0$ and $x_0 \neq 1$, we can apply the Hopf's Lemma which implies that
		\begin{equation*}
		w_x (t,x_0) < 0 \qquad \forall t \in (0,t_1)
		\end{equation*}
		It follows that $u_x (t,x_0) < \tilde{u} _x (t,x_0)$ for all $t \in (0,t_1)$ which contradicts the second assumption in \eqref{assumption u}.
		
		As a consequence, the assumption $y_1>-1$ is false and this implies that $y_1 = -1$, therefore $q \equiv \tilde{q}$ on $(-1,x_0]$.

\subsection{Conclusion} \hfill
		
		The proof is equivalent for $[x_0,1)$, and one can show that $q \equiv \tilde{q} $ on $(-1,1)$.
		
		The uniqueness result of Theorem \ref{thm-Sellers global existence} implies that $u \equiv \tilde{u}$ on $[0,+\infty) \times (-1,1)$.\qed
		

\section{Proof of Theorem \ref{thm-inv-stab}}
\label{inverse problem2}

We follow the strategy used in \cite{TORT2012683, Sellers-PM-JT-JV} (that was adapted from the method introduced in Imanuvilov-Yamamoto, to study the Sellers case), focusing on the changes brought by the memory term. The proof of the stability result is decomposed in several steps, we give the main intermediate results and we will refer to \cite{TORT2012683, Sellers-PM-JT-JV} for some details.

Remember that $u$ and $\tilde u$ are the solutions of \eqref{stab-eq-u} and \eqref{stab-eq-utilde}. 
Thanks to the assumptions, we can assume that $T<\delta$ without loss of generality.

\subsection{Step 1: the problem solved by the difference $w:=u-\tilde u$} \hfill

Clearly the difference 
\begin{equation}
\label{stab-def-w}
w := u - \tilde u 
\end{equation}
satisfies the problem

\begin{equation}
\label{stab-eq-w}
\begin{cases}
w_t - (\rho w_x)_x = K^* + K + \tilde K + K^h , \quad t>0, x\in (-1,1), \\
\rho w_x = 0 , \quad x= \pm 1 , \\
w(s,x)= u_0 (s,x) - \tilde u_0 (s,x), \quad s\in [-\tau,0], x\in (-1,1) ,
\end{cases}
\end{equation}
where the source terms $K^*$, $K$, $\tilde K$ and $K^h$ are defined by
\begin{equation}
\label{stab-def-K*}
K^* := r(t) (q(x)-\tilde q(x)) \beta (u),
\end{equation}

\begin{equation}
\label{stab-def-K}
K := r(t) \tilde q (x) (\beta (u) - \beta (\tilde u)),
\end{equation}

\begin{equation}
\label{stab-def-Kt}
\tilde K := - \varepsilon (u) \vert u \vert ^3 u + \varepsilon (\tilde u) \vert\tilde  u \vert ^3 \tilde u,
\end{equation}

\begin{equation}
\label{stab-def-Kh}
K^h := f(H)- f(\tilde  H) .
\end{equation}
Note that compared to \cite{TORT2012683, Sellers-PM-JT-JV}, our goal is similarly to estimate from above $K^*$ and the only difference lies in the presence of $K^h$. However, since we assumed that the memory kernel $k$ is supported in $[-\tau, -\delta]$, and that $T<\delta$, it is clear (as in the previous section) that the memory terms $H$ and $\tilde H$ are directly determined from the initial conditions $u_0$ and $\tilde u_0$.


\subsection{Step 2: a useful property of the source term $K^*$} \hfill

We claim that $K^*$ satisfies the following property (classical in that question of determining a source term):

\begin{equation}
\label{stab-estim-K*}
\exists C_0 >0 \quad \text{ s.t.} \quad \forall t \in (0,T), \forall x \in (-1,1), \quad \Bigl \vert \frac{\partial K^*}{\partial t} (t,x) \Bigr \vert \leq C_0 K^* (T',x)  .
\end{equation}
Since
$$ K^* _t := r'(t) (q(x)-\tilde q(x)) \beta (u) + r(t) (q(x)-\tilde q(x)) \beta ' (u) u_t,
$$
\eqref{stab-estim-K*} is an easy consequence of the following regularity result:

\begin{Lemma}
\label{lem-reg-u_t}
Under the regularity assumptions of Theorem \ref{thm-inv-stab}, the solution $u$ of 
\eqref{stab-eq-u} satisfies: $u_t \in L^\infty ((0,T)\times I)$, and more precisely, there exists $C(T,M,M')>0$ such that, for all $u_0 \in \mathcal U^{(loc)} _M$, for all $q\in \mathcal Q^{(loc)} _{M'}$, we have 
$$ \Vert u_t \Vert _{L^\infty ((0,T) \times I)} \leq C(T,M,M') .$$
\end{Lemma}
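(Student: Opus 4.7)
The plan is to regard $v:=u_t$ as the solution of a linear parabolic Cauchy problem with $L^\infty$ coefficients, $L^\infty$ source, and $L^\infty$ initial datum (all with norms controlled by a constant depending only on $M$ and $M'$), and then to derive the pointwise estimate by a truncation argument in the spirit of the proof of Theorem \ref{maximum}. The first observation is that, since $k$ vanishes on $[-\delta,0]$ and $T<\delta$, for $t\in(0,T)$ the memory term reduces to
$$H(t,x,u)=\int_{-\tau}^{-\delta} k(s,x)\,u_0(t+s,x)\,ds,$$
which depends only on $u_0$. Thanks to \eqref{history hp} and the bound $\sup_{[-\tau,0]}\|u_0\|_{L^\infty}\leq M$, both $H$ and $H_t$ lie in $L^\infty((0,T)\times I)$ with norm bounded by $C(M)$, and so do $f(H)$ and $f'(H)\,H_t$ by \eqref{hyp-f}.

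Differentiating \eqref{stab-eq-u} formally in time and setting $v:=u_t$ yields
$$v_t-(\rho v_x)_x = a(t,x)\,v+g(t,x),\qquad \rho v_x\big|_{\partial I}=0,$$
with
$$a:=r(t)q(x)\beta'(u)-\bigl(\varepsilon'(u)|u|^3 u+4\varepsilon(u)|u|^3\bigr),\qquad g:=r'(t)q(x)\beta(u)+f'(H)\,H_t.$$
Using \eqref{hyp-q-r}, \eqref{beta-Sellers}, \eqref{epsilon-Sellers}, \eqref{hyp-f} together with the $L^\infty$ bound on $u$ supplied by Theorem \ref{maximum} (whose constant depends only on $M$ and $M'$), both $a$ and $g$ belong to $L^\infty((0,T)\times I)$ with norm bounded by $C(M,M')$. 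Reading \eqref{stab-eq-u} at $t=0$ yields
$$v(0,x)=A u_0(0)(x)+r(0)q(x)\beta(u_0(0))-\varepsilon(u_0(0))|u_0(0)|^3 u_0(0)+f(H(0,x,u_0)),$$
and each summand lies in $L^\infty(I)$ by the definition of $\mathcal U^{(loc)}_M$ (in particular $Au_0(0)\in L^\infty$) and the first paragraph, so $\|v(0,\cdot)\|_{L^\infty(I)}\leq C(M,M')$.

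Once the linear problem for $v$ is in place, an $L^\infty$ bound follows by the truncation technique used in Theorem \ref{maximum}. Set $K:=\|a\|_{L^\infty}$, $G:=\|g\|_{L^\infty}$, $N_0:=\|v(0,\cdot)\|_{L^\infty}$, and $N(t):=(N_0+Gt)e^{Kt}$; a direct check gives $N'(t)\geq a(t,x)\,N(t)+g(t,x)$ on $(0,T)\times I$, so $w:=v-N$ satisfies $w_t-(\rho w_x)_x\leq a\,w$. Testing this inequality against $(w)^+$, integrating over $I$ and using the degenerate boundary condition (which kills the boundary term, exactly as in the proof of Theorem \ref{maximum}), one obtains
$$\tfrac{1}{2}\frac{d}{dt}\int_I ((v-N)^+)^2\,dx+\int_I \rho\,\bigl(((v-N)^+)_x\bigr)^2\,dx\leq K\int_I ((v-N)^+)^2\,dx,$$
and Gronwall's lemma combined with $(v-N)^+(0,\cdot)\equiv 0$ forces $(v-N)^+\equiv 0$, i.e.\ $v\leq N(t)$; a symmetric argument gives $v\geq -N(t)$, whence
$$\|u_t\|_{L^\infty((0,T)\times I)}\leq (N_0+GT)e^{KT}\leq C(T,M,M').$$

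The main obstacle is the rigorous justification of differentiating \eqref{stab-eq-u} in time, since a priori $u$ only has the regularity $H^1(0,T;L^2(I))\cap L^2(0,T;D(A))\cap C([-\tau,T];V)$ supplied by Theorem \ref{thm-Sellers global existence}. The standard remedy is to approximate $u_0$ by smoother data, exploit the analyticity of the semigroup $e^{tA}$ to gain the extra time regularity needed to legitimate the computations above, establish the $L^\infty$ estimate for the approximating solutions uniformly in the regularization parameter, and pass to the limit using the uniqueness statement of Proposition \ref{prop-unicite}.
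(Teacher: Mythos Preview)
Your proposal is correct and follows essentially the same approach as the paper. The paper's own proof merely refers to Theorem~3.4 and Corollary~3.1 of \cite{TORT2012683}, after recording (exactly as you do) that for $t\in(0,T)$ with $T<\delta$ the history term depends only on $u_0$, so that both $H$ and $H_t$ are bounded in terms of the data; your write-up supplies the details of that reference, namely differentiating the equation in time, bounding the coefficients and the initial value $u_t(0,\cdot)=Au_0(0)+\cdots$, and running the Stampacchia truncation with a time-dependent barrier, which is precisely the method used there.
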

Lemma \ref{lem-reg-u_t} can be proved as Theorem 3.4 and Corollary 3.1 of \cite{TORT2012683}, noting that the additive memory term satisfies
$$
\forall t\in (0,T), \quad  H(t,x)
=  \int _{t-\tau} ^{t-\delta} k(s-t,x) u(s,x) \, ds
= \int _{t-\tau} ^{t-\delta} k(s-t,x) u_0(s,x) \, ds ,
$$
hence 
$$ H_t = k(-\tau,x) u_0(t-\tau,x) - k(-\delta,x) u_0(t-\delta,x) - \int _{t-\tau} ^{t-\delta} k_t(s-t,x) u_0(s,x) \, ds ,$$
which is bounded from the regularity assumptions of Theorem \ref{thm-inv-stab}.


\subsection{Step 3: a Carleman estimate on the problem solved by $z:=w_t$} \hfill

Consider $z:=w_t$. It is solution of the following problem
\begin{equation}
\label{stab-eq-z}
\begin{cases}
z_t - (\rho z_x)_x = K^* _t + K_t + \tilde K_t + K^h _t , \quad t>0, x\in (-1,1), \\
\rho z_x = 0 , \quad x= \pm 1 ,
\end{cases}
\end{equation}
and we can apply standard Carleman estimates for such degenerate operator (see \cite{sicon2008, memoire}): choosing 
\begin{itemize}
\item $\theta: (t_0,T) \to \Bbb R_+ ^*$ smooth, strictly convex, such that
$$ \theta (t) \to + \infty \quad \text{ as } t\to t_0 ^+ \text{ and as } t \to T^- ,$$
and $\theta ' (T')=0$ such that $T'$ is the point of global minimum,
\item and $p: (-1,1) \to [1,+\infty)$ well designed with the respect to the degeneracy (see \cite{sicon2008} in the typical degenerate case, and \cite{TORT2012683} for an explicit construction in the case of the Sellers model),
\end{itemize}
and considering
$$ \sigma (t,x) = \theta (t) p(x) ,$$
and $R>0$ large enough,
the following Carleman estimate holds true, see Theorem 4.2 in \cite{TORT2012683}:
\begin{multline}
\label{Carleman-z}
\int _{t_0} ^T \int _{-1} ^1 \Bigl( R^3 \theta ^3 (1-x^2) z^2 + R\theta (1-x^2) z_x ^2 + \frac{1}{R\theta} z_t ^2 \Bigr)  e^{-2R\sigma}
\\
\leq C_1 \Bigl( \int _{t_0} ^T \int _{-1} ^1 (K^* _t + K_t + \tilde K_t + K^h _t) ^2 e^{-2R\sigma} + \int _{t_0} ^T \int _{a} ^b R^3 \theta ^3 z^2 e^{-2R\sigma} \Bigr) .
\end{multline}
Then,
\begin{itemize}
\item  since $u_t$ is bounded, we immediately obtain that
$$ \vert K_t \vert + \vert \tilde K _t \vert \leq c (\vert w\vert + \vert z \vert) ,$$
and this allows to show that
\begin{equation}
\label{estim-KKt}
\int _{t_0} ^T \int _{-1} ^1 (K_t ^2 + \tilde K_t ^2 ) e^{-2R\sigma} 
\leq c\Bigl(  \Vert w(T') \Vert ^2 _{L^2 (I)}  + \int _{t_0} ^T \int _{-1} ^1  z^2 e^{-2R\sigma} \Bigr);
\end{equation}
\item for the memory term: using the explicit form given by the initial conditions, we immediately have 
$$ K^h _t  = f'(H)H_t - f'(\tilde H) \tilde H _t 
= f'(H) (H_t - \tilde H_t) + (f'(H) - f'(\tilde H)) \tilde H_t ,
$$
hence
$$ \vert K^h _t \vert 
\leq c \Bigl( \vert H_t - \tilde H_t \vert + \vert H - \tilde H \vert  \Bigr) $$
and then
\begin{equation}
\label{estim-Kh}
\int _{t_0} ^T \int _{-1} ^1 K_h ^2 e^{-2R\sigma} 
\leq c \Vert u_0 - \tilde u_0 \Vert _{C([-\tau,0],V)} ^2 .
\end{equation}

\end{itemize}
Using \eqref{estim-KKt} and \eqref{estim-Kh} in the Carleman estimate \eqref{Carleman-z},
we obtain
\begin{multline*}
\int _{t_0} ^T \int _{-1} ^1 \Bigl( R^3 \theta ^3 (1-x^2) z^2 + R\theta (1-x^2) z_x ^2 + \frac{1}{R\theta} z_t ^2 \Bigr)  e^{-2R\sigma}
\\
\leq C \Bigl( \int _{t_0} ^T \int _{-1} ^1 (K^* _t)^2 e^{-2R\sigma} + \int _{t_0} ^T \int _{a} ^b R^3 \theta ^3 z^2 e^{-2R\sigma} \Bigr) 
\\
+ C \Bigl(  \Vert w(T') \Vert ^2 _{L^2 (I)}  + \int _{t_0} ^T \int _{-1} ^1  z^2 e^{-2R\sigma} \Bigr) + c \Vert u_0 - \tilde u_0 \Vert _{C([-\tau,0],V)} ^2 .
\end{multline*}
Absorbing the term $  \int _{t_0} ^T \int _{-1} ^1  z^2 e^{-2R\sigma}$ in the left-hand side (this is classical, using Hardy type inequalities, see \cite{sicon2008, TORT2012683}), we finally obtain

\begin{multline}
\label{step3}
I_0 := \int _{t_0} ^T \int _{-1} ^1 \Bigl( R^3 \theta ^3 (1-x^2) z^2 + R\theta (1-x^2) z_x ^2 + \frac{1}{R\theta} z_t ^2 \Bigr)  e^{-2R\sigma}
\\
\leq C \int _{t_0} ^T \int _{-1} ^1 (K^* _t)^2 e^{-2R\sigma} + C \int _{t_0} ^T \int _{a} ^b R^3 \theta ^3 z^2 e^{-2R\sigma} \Bigr) 
\\
+ C   \Vert w(T') \Vert ^2 _{L^2 (I)}   + C \Vert u_0 - \tilde u_0 \Vert _{C([-\tau,0],V)} ^2 .
\end{multline}


\subsection{Step 4: an estimate from above} \hfill

Using Step 2, we see that
\begin{multline*}
\int _{t_0} ^T \int _{-1} ^1 (K^* _t)^2 e^{-2R\sigma} \, dx \, dt 
\leq 
C_0 ^2 \int _{t_0} ^T \int _{-1} ^1 K^*(T')^2 e^{-2R\sigma} \, dx \, dt 
\\
C_0 ^2 \int _{-1} ^1 K^*(T')^2 \Bigl( \int _{t_0} ^T e^{-2R\sigma} \, dt \Bigr) \, dx ,
\end{multline*}
and it is classical (see Imanuvilov-Yamamoto \cite{Imanuvilov}, equation (3.17)) that
$$  \int _{t_0} ^T e^{-2R\sigma} \, dt  = o (e^{-2R\sigma (T')})
\quad \text{ as } R\to +\infty .$$
(This is due to the convexity of the function $\theta$, that attains its minimum at $T'$.)
Hence 
\begin{equation}
\label{step4}
\int _{t_0} ^T \int _{-1} ^1 (K^* _t)^2 e^{-2R\sigma}\, dx \, dt  = o\Bigl( \int _{-1} ^1 K^*(T')^2 e^{-2R\sigma (T')} \, dx \Bigr) .
\end{equation}

\subsection{Step 5: an estimate from below} \hfill

As in \cite{TORT2012683, Sellers-PM-JT-JV}, we have
\begin{equation}
\label{step5}
\int _{-1} ^1 z(T')^2 e^{-2R\sigma (T')} 
\leq c I_0 .
\end{equation}


\subsection{Step 6: conclusion} \hfill

Now we are in position to conclude: using the equation in $w$:
$$ K^* (T') = z (T') - (\rho w_x)_x (T') - K(T') - \tilde K (T') - K^h (T') ,$$
hence
\begin{multline*}
\int _{-1} ^1 K^*(T')^2 e^{-2R\sigma (T')}
\\
\leq C \Bigl( \int _{-1} ^1 ( z (T')^2 + (\rho w_x)_x (T')^2 + K(T')^2 + \tilde K (T')^2 + K^h (T') ^2) e^{-2R\sigma (T')} \Bigr) ;
\end{multline*}
now note that
$$ \vert K(T') \vert \leq c \vert w (T') \vert, \quad  \vert \tilde K(T') \vert \leq c \vert w (T') \vert ,$$
and 
$$ \int _{-1} ^1 K^h(T') ^2 e^{-2R\sigma (T')} \leq C \Vert u_0 - \tilde u_0 \Vert _{C([-\tau,0],V)} ^2 ;$$
then using \eqref{step5} , \eqref{step3} and \eqref{step4}, we obtain that
\begin{multline*}
\int _{-1} ^1 K^*(T')^2 e^{-2R\sigma (T')}
\leq C
\Bigl( \Vert w( T') \Vert _{D(A)}^{2}
\\
+ \Vert w_t \Vert _{L^{2}((t_0,T)\times (a,b))}^{2}
+ \Vert u_0 - \tilde u_0 \Vert _ {C([-\tau,0]; V)} ^2 \Bigr) .
\end{multline*}
Looking to the form of $K^*$, we obtain \eqref{PISstab1var} . \qed


\section{Proof of Theorem \ref{thm-Budyko global existence}}
\label{sec-proof-wp-B}


\subsection{The strategy to prove Theorem \ref{thm-Budyko global existence}} \hfill

The method is usual:
\begin{itemize}
\item first we approximate the coalbedo $\beta $ by a sequence of smooth functions $\beta _j$,
\item then we consider the approximate problem associated to $\beta _j$, and we denote $u_j$ its (unique) solution,
\item we obtain suitable assumptions on $u_j$, and we pass to the limit, and we prove that we obtain a solution $u_\infty$ of the original problem.
\end{itemize}

Let us be more precise:
first, given $j\geq 1$, we consider a function $ \beta _j: \Bbb R \to \Bbb R$ which is of class $C^1 $, nondecreasing, and
$$ \begin{cases}
\beta _j (u) = a_i , \quad u \leq \bar u - \frac{1}{j} , \\
\beta _j (u) = a_f , \quad u \geq \bar u + \frac{1}{j} .
\end{cases} $$

Then we can consider the approximate problem
\begin{equation}
\label{stab-eq-uj}
	\begin{cases}
	u_t - (\rho (x) u_x)_x = r(t)q(x)\beta _j (u) - (a+bu) + f(H), \quad t>0,  x \in I,\\
	\rho (x) u_x = 0, \quad x = \pm 1 ,\\
	u(s,x) = u_0(s,x), \quad s \in [-\tau,0], x\in I ,
	\end{cases}
\end{equation}
which is of Sellers type. Hence, since $u_0 \in C([-\tau,0];V)$, $u_0(0) \in D(A) \cap L^\infty (I)$, 
Theorem \ref{thm-Sellers global existence} ensures us that, given $T>0$, the problem \eqref{stab-eq-uj} has a unique solution $u_j$
such that, for all $T>0$, we have
\begin{equation}
\label{reg-uj}
u_j \in H^1(0,T;L^2(I)) \cap L^2(0,T;D(A)) \cap C([-\tau,T];V) .
\end{equation}
We will denote
\begin{equation}
\label{def-gammauj}
\gamma _j (t,x) := r(t)q(x)\beta _j (u_j(t,x)) - (a+b u_j(t,x)) + f(H_j(t,x)),
\end{equation}
where of course
$$ H_j (t,x) = \int _{-\tau} ^{0} k(s,x), u_j (t+s,x) \, ds .$$
In order to pass to the limit $j\to \infty$ in the approximate problem \eqref{stab-eq-uj}, we need some estimates on 
$u_j$ and $\gamma _j$. We provide them in the following lemmas:

\begin{Lemma}
\label{lem-estim-uj}
The family $(u_j)_j$ is relatively compact in $C([0,T];L^2 (I))$.
\end{Lemma}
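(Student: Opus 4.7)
The plan is to deduce relative compactness of $(u_j)_j$ in $C([0,T];L^2(I))$ from the Arzel\`a--Ascoli theorem. For this I will establish two uniform (in $j$) estimates: a pointwise-in-$t$ bound of $u_j(t)$ in the space $V$, which combined with the compact embedding $V\hookrightarrow L^2(I)$ recalled in \eqref{VLp} gives pointwise relative compactness in $L^2(I)$; and a uniform H\"older-in-time modulus of continuity of $t\mapsto u_j(t)$ with values in $L^2(I)$.

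First, since $\beta_j$ takes values in $[a_i,a_f]$ uniformly in $j$, and $r$, $q$, $f$, $k$ are bounded, the source
$$\gamma_j(t,x)=r(t)q(x)\beta_j(u_j(t,x))-(a+bu_j(t,x))+f(H_j(t,x))$$
satisfies $\|\gamma_j(t)\|_{L^2(I)}^2\le C(1+\|u_j(t)\|_{L^2(I)}^2)$ uniformly in $j$. Multiplying the equation in \eqref{stab-eq-uj} by $u_j$, integrating over $I$, dropping the nonnegative diffusion term and applying Cauchy--Schwarz, I obtain
$$\tfrac{d}{dt}\|u_j(t)\|_{L^2(I)}^2\le C\bigl(1+\|u_j(t)\|_{L^2(I)}^2\bigr).$$
Gr\"onwall's lemma gives a uniform bound $\sup_{t\in[0,T]}\|u_j(t)\|_{L^2(I)}\le C$, and therefore $\|\gamma_j(t)\|_{L^2(I)}\le C'$ uniformly in $j$ and $t$.

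Next, I multiply \eqref{stab-eq-uj} by $u_{j,t}$ and integrate in $x$. An integration by parts using the boundary condition $\rho u_{jx}=0$ at $x=\pm1$ yields
$$\|u_{j,t}(t)\|_{L^2(I)}^2+\tfrac12\tfrac{d}{dt}\int_I\rho(x)u_{jx}(t,x)^2\,dx=\int_I u_{j,t}\,\gamma_j\,dx\le \tfrac12\|u_{j,t}(t)\|_{L^2(I)}^2+\tfrac12\|\gamma_j(t)\|_{L^2(I)}^2.$$
Integrating from $0$ to $t\in[0,T]$ and using $u_0(0)\in D(A)\subset V$ to bound $\|\sqrt{\rho}u_{0x}(0)\|_{L^2(I)}$ from the data, I deduce
$$\sup_{t\in[0,T]}\|u_j(t)\|_{V}^2+\int_0^T\|u_{j,t}(\sigma)\|_{L^2(I)}^2\,d\sigma\le C,$$
uniformly in $j$. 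Since $V\hookrightarrow L^2(I)$ is compact, the set $\{u_j(t):j\ge1,\,t\in[0,T]\}$ is relatively compact in $L^2(I)$. Cauchy--Schwarz in time together with the $L^2(0,T;L^2(I))$ bound on $u_{j,t}$ yields, for $0\le s\le t\le T$,
$$\|u_j(t)-u_j(s)\|_{L^2(I)}\le\int_s^t\|u_{j,t}(\sigma)\|_{L^2(I)}\,d\sigma\le C\sqrt{t-s},$$
which is a uniform (in $j$) modulus of continuity. Arzel\`a--Ascoli then gives the claimed relative compactness in $C([0,T];L^2(I))$.

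The only delicate point is the uniform bound on the source $\gamma_j$: the linear term $-bu_j$ prevents a direct $L^\infty$ estimate in the spirit of Theorem \ref{maximum} without assumptions on the sign of $b$, and the memory term $f(H_j)$ couples the current state to the past. Both difficulties are bypassed by the two-step energy strategy above, exploiting (i) uniform boundedness of $\beta_j$ and $f$, so that the memory contributes at most $\|f\|_\infty$ pointwise to $\gamma_j$, and (ii) Gr\"onwall to absorb the linear $-bu_j$ term in $L^2(I)$. These energy estimates are $j$-independent precisely because they only use the $L^\infty$ bound $\|\beta_j\|_\infty\le a_f$ and never the $C^1$-norm of $\beta_j$, which is the main reason the method succeeds even though $\|\beta_j'\|_\infty\to\infty$ as $j\to\infty$.
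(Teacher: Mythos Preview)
Your proof is correct and reaches the same conclusion as the paper via Arzel\`a--Ascoli, but the route to the two required ingredients is genuinely different.

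The paper first invokes the maximum principle (Theorem~\ref{maximum}, extended to the linear emission $R_e=a+bu$ via Remark~\ref{local budyko}) to get a uniform $L^\infty$ bound on $u_j$; this makes $\gamma_j$ uniformly bounded in $L^\infty((0,T)\times I)$. It then obtains the uniform $V$ bound on $u_j(t)$ by appealing to the semigroup representation and the fractional-power estimates \eqref{dissipative pazy}, \eqref{dissipative pazy2} (essentially repeating the computations from the global-existence proof), and proves equicontinuity by decomposing $u_j(t+h)-u_j(t)$ into three pieces of the variation-of-constants formula, each estimated via analytic-semigroup machinery, ending with the same $O(\sqrt{|h|})$ modulus.

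You bypass both the maximum principle and the semigroup calculus entirely: a first energy estimate (test with $u_j$, Gr\"onwall) gives the uniform $L^2$ bound, hence the uniform $L^2$ bound on $\gamma_j$; a second energy estimate (test with $u_{j,t}$) then gives simultaneously the uniform $V$ bound and the $L^2(0,T;L^2(I))$ bound on $u_{j,t}$, from which equicontinuity follows by Cauchy--Schwarz. This is more elementary and, as you note, does not use $\|\beta_j'\|_\infty$ anywhere. It also sidesteps the sign condition on $b$ that the maximum-principle route implicitly needs. The only point worth making explicit is that the identity $\int_I u_{j,t}(\rho u_{j,x})_x\,dx=-\tfrac12\tfrac{d}{dt}\int_I\rho u_{j,x}^2\,dx$ is justified at the regularity level $u_j\in H^1(0,T;L^2(I))\cap L^2(0,T;D(A))$ by the standard abstract formula $\tfrac{d}{dt}\|(-A)^{1/2}u\|^2=2\langle u_t,-Au\rangle$ for self-adjoint $A$; this is routine but deserves a one-line citation.
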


\begin{Lemma}
\label{lem-estim-gammaj}
The family $(\gamma_j)_j$ is weakly relatively compact in $L^2 (0,T;L^2 (I))$.
\end{Lemma}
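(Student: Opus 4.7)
The strategy is to reduce weak relative compactness in $L^2(0,T;L^2(I))$ to a uniform $L^2$ bound on $(\gamma_j)_j$: since $L^2(0,T;L^2(I))$ is reflexive, bounded sets are weakly relatively compact by Banach--Alaoglu. So the whole task is to estimate the three pieces of
\[
\gamma_j(t,x) = r(t)q(x)\beta_j(u_j(t,x)) - (a+bu_j(t,x)) + f(H_j(t,x))
\]
uniformly in $j$.

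\textbf{Step 1: the coalbedo piece.} By the very construction of the approximating sequence $\beta_j$, each $\beta_j$ takes values in $[a_i,a_f]$; in particular $\|\beta_j\|_{L^\infty(\mathbb{R})} \leq \max(|a_i|,|a_f|)$ uniformly in $j$. Combined with \eqref{hyp-q-r} this yields
\[
\|r(\cdot)q(\cdot)\beta_j(u_j)\|_{L^\infty((0,T)\times I)} \leq \|r\|_{L^\infty} \|q\|_{L^\infty(I)} \max(|a_i|,|a_f|),
\]
hence a uniform $L^2$ bound on this contribution.

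\textbf{Step 2: the memory piece.} Assumption \eqref{hyp-f} gives $\|f\|_{L^\infty(\mathbb{R})} < \infty$, so $|f(H_j(t,x))| \leq \|f\|_\infty$ pointwise, and this contribution is uniformly bounded in $L^\infty((0,T)\times I)$, and therefore in $L^2(0,T;L^2(I))$, without any estimate on $H_j$ being needed at this stage.

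\textbf{Step 3: the linear piece.} The remaining term $-(a+bu_j)$ is controlled via Lemma \ref{lem-estim-uj}: relative compactness of $(u_j)_j$ in $C([0,T];L^2(I))$ implies, in particular, boundedness of $(u_j)_j$ in $L^\infty(0,T;L^2(I))$, hence in $L^2(0,T;L^2(I))$. Putting the three estimates together provides a constant $C$ independent of $j$ with $\|\gamma_j\|_{L^2(0,T;L^2(I))} \leq C$, and Banach--Alaoglu concludes the proof.

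\textbf{Main obstacle.} There is no serious analytical obstacle: the statement is essentially a bookkeeping of uniform bounds, made possible by the fact that the nonlinearities $\beta_j$ and $f$ happen to be uniformly bounded. The only subtlety is that, unlike in the Sellers setting, the emissivity term $-(a+bu)$ is merely linear, so no direct $L^\infty$ bound on $u_j$ analogous to Theorem~\ref{maximum} can be hoped for here; one must genuinely invoke the preceding Lemma \ref{lem-estim-uj} (or, equivalently, a Gronwall-type energy estimate obtained by testing \eqref{stab-eq-uj} against $u_j$ and using the uniform $L^\infty$ bounds on $\beta_j$ and $f$ together with the dissipativity of $(\rho u_x)_x$) to close the argument.
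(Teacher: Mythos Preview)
Your proof is correct: uniform boundedness in $L^2(0,T;L^2(I))$ together with reflexivity is exactly the right mechanism, and your termwise estimates for the coalbedo, memory, and linear pieces are all fine.

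The paper's argument is even shorter, and your ``main obstacle'' paragraph contains a misconception worth correcting. The paper \emph{does} have a uniform $L^\infty$ bound on $(u_j)_j$: the very first line of the proof of Lemma~\ref{lem-estim-uj} invokes Theorem~\ref{maximum} (extended to the linear emissivity $R_e = a+bu$ via Remark~\ref{local budyko}) to assert that $(u_j)_j$ is uniformly bounded in $L^\infty([0,+\infty)\times I)$. From this the paper's proof of the present lemma is one line: $(u_j)_j$ bounded in $L^\infty$ implies $(\gamma_j)_j$ bounded in $L^\infty((0,T)\times I)$, hence in $L^2(0,T;L^2(I))$, hence weakly relatively compact. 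Your route via the $C([0,T];L^2(I))$ bound from Lemma~\ref{lem-estim-uj} works as well, but it is slightly circuitous since that lemma itself already rests on the $L^\infty$ bound you thought unavailable.
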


Assume that Lemmas \ref{lem-estim-uj} and \ref{lem-estim-gammaj} hold true. Then, we can extract from $(u_j, \gamma_j)_j$
a subsequence $(u_{j'}, \gamma_{j'})_{j'}$ such that
$$ u_{j'} \to u_\infty \text{ in } C([0,T];L^2 (I)) \quad \text{ and } \quad
\gamma _{j'} \rightharpoonup \gamma _\infty \text{ in } L^2 (0,T;L^2 (I)) .$$
 
Then we prove that
\begin{Lemma}
\label{lem-pbm-limite}
The functions $u_\infty$ and $\gamma _\infty$ satisfy the following formula
$$ \forall t \in [0,T], \quad u_\infty (t) = e^{tA} u_0 (0) + \int _0 ^t e^{(t-s)A} \gamma _\infty (s) \, ds .$$
Moreover,
$$ u_\infty \in H^1(0,T;L^2(I)) \cap L^2(0,T;D(A)) \cap C([-\tau,T];V) ,$$
and 
$$ \forall s \in [-\tau,0], \quad u_\infty (s) = u_0 (s) .$$
\end{Lemma}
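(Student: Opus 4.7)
\medskip

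\noindent\textbf{Proof proposal for Lemma \ref{lem-pbm-limite}.}
The plan is to pass to the limit in the mild-solution formula satisfied by each approximate solution $u_j$, then to recover the claimed regularity and initial condition for the limit $u_\infty$ directly from the resulting integral representation. Since $u_j$ is the unique mild solution of the Sellers-type problem \eqref{stab-eq-uj}, Definition \ref{mild} gives
\begin{equation*}
u_{j'}(t) = e^{tA} u_0(0) + \int_0^t e^{(t-s)A}\, \gamma_{j'}(s)\, ds, \qquad t\in[0,T],
\end{equation*}
and the goal is to show that both sides converge to the corresponding quantities for $u_\infty$ and $\gamma_\infty$.

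First I would pass to the limit pointwise in $t$. For each fixed $t\in[0,T]$, Lemma \ref{lem-estim-uj} yields the strong convergence $u_{j'}(t)\to u_\infty(t)$ in $L^2(I)$, hence also weakly in $L^2(I)$. On the right-hand side, the first term is independent of $j'$, while for the integral term I would introduce the bounded linear operator
\begin{equation*}
\Lambda_t : L^2(0,T;L^2(I)) \to L^2(I), \qquad \Lambda_t(g) := \int_0^t e^{(t-s)A}\, g(s)\, ds,
\end{equation*}
whose continuity follows from $|||e^{(t-s)A}|||_{\mathcal L(L^2(I))} \leq 1$ (Theorem \ref{thm-Aanal}) and Cauchy--Schwarz. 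Since $\Lambda_t$ is continuous, it is also weakly continuous; Lemma \ref{lem-estim-gammaj} then gives $\Lambda_t(\gamma_{j'}) \rightharpoonup \Lambda_t(\gamma_\infty)$ in $L^2(I)$. Uniqueness of weak limits in $L^2(I)$ produces the identity
\begin{equation*}
u_\infty(t) = e^{tA} u_0(0) + \int_0^t e^{(t-s)A}\, \gamma_\infty(s)\, ds \qquad \forall t\in[0,T].
\end{equation*}

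Next, the regularity $u_\infty \in H^1(0,T;L^2(I)) \cap L^2(0,T;D(A))$ follows from standard analytic semigroup theory applied to this formula: since $\gamma_\infty \in L^2(0,T;L^2(I))$ and $u_0(0) \in D(A)$, the maximal $L^2$-regularity result recalled just after the definition of mild solution for the linear nonhomogeneous problem \eqref{linear eq} applies to both summands. To obtain $u_\infty \in C([-\tau,T];V)$, I would define $u_\infty(s) := u_0(s)$ for $s \in [-\tau,0]$ (consistent with the fact that each $u_{j'}$ already coincides with $u_0$ there, hence the strong limit does too), and then argue the $V$-continuity on $[0,T]$ exactly as in Step~1 of the proof of Proposition \ref{prop-local existence}: the first term $t\mapsto e^{tA}u_0(0)$ lies in $C([0,T];V)$ via the Pazy-type bounds \eqref{dissipative pazy}--\eqref{dissipative pazy2} using $u_0(0)\in D(A)$, while the integral term is controlled by the same $(-\tilde A)^{1/2}$ estimates used there (now applied to $\gamma_\infty \in L^2(0,T;L^2(I))$). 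Matching at $t=0$ holds since $e^{0A}u_0(0) + 0 = u_0(0) = u_\infty(0^-)$.

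The only nontrivial technical point is the last one: to ensure continuity in $V$ at $t=0^+$, one must indeed use the fact that $u_0(0) \in D(A)$ rather than merely $V$, and verify that the integral contribution vanishes in $V$-norm as $t\to 0^+$; both are provided by the estimates already written in Step~1 of Proposition \ref{prop-local existence}, so no new idea is required here. The passage to the limit itself is routine given the strong--weak compactness supplied by Lemmas \ref{lem-estim-uj} and \ref{lem-estim-gammaj}.
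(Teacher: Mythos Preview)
Your proposal is correct and follows essentially the same route as the paper. The only cosmetic difference is in how the weak limit of the Duhamel term is identified: the paper tests against a fixed $\xi\in L^2(I)$ and uses the self-adjointness of $e^{(t-s)A}$ to rewrite $\langle \xi,\int_0^t e^{(t-s)A}\gamma_{j'}(s)\,ds\rangle$ as $\langle z,\gamma_{j'}\rangle_{L^2(0,T;L^2(I))}$ with $z(s)=\mathbf 1_{[0,t]}(s)\,e^{(t-s)A}\xi$, whereas you invoke the abstract fact that the bounded linear operator $\Lambda_t$ is weak-to-weak continuous. These are the same argument in different clothing (your version does not even need self-adjointness). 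Your discussion of the $C([-\tau,T];V)$ regularity is in fact more explicit than the paper's, which simply asserts the regularity from $\gamma_\infty\in L^2(0,T;L^2(I))$ and $u_0(0)\in D(A)$.
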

Finally, it remains to prove that $u_\infty$ is solution of the original Byudyko problem, and so we have to prove that 
$\gamma _\infty$ satisfies the set inclusion. This is the object of the last lemma:

\begin{Lemma}
\label{lem-2nd-membre}
The limit function $\gamma _\infty$ satisfies the set inclusion:
\begin{multline}
\label{lader?}
\gamma _\infty (t) (x) 
+ (a+b u_\infty (t,x)) - f(\int _{-\tau} ^0 k(s,x) u_\infty (t+s,x) \, ds )
\\
\in r(t) q(x) \mathcal \beta (u_\infty (t,x)) 
\text{ a.e.} (t,x) \in (0,T) \times I .
\end{multline}
Therefore the function $u_\infty$ is solution of the original Byudyko problem \eqref{stab-eq-u-B}.
\end{Lemma}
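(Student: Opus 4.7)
The plan is to pass to the limit in each term of the identity
\[\gamma_{j'}(t,x) = r(t)q(x)\beta_{j'}(u_{j'}(t,x)) - (a+bu_{j'}(t,x)) + f(H_{j'}(t,x))\]
and to identify the weak limit of the coalbedo contribution with an element of the multivalued graph $\beta(u_\infty)$.

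First I would handle the terms that converge strongly. Lemma \ref{lem-estim-uj} gives $u_{j'} \to u_\infty$ in $C([0,T];L^2(I))$, and since every $u_{j'}$ agrees with $u_0$ on $[-\tau,0]$, this convergence extends to $C([-\tau,T];L^2(I))$. Consequently $(a+bu_{j'}) \to (a+bu_\infty)$ in $L^2((0,T)\times I)$, and the memory term satisfies
\[\|H_{j'}-H_\infty\|_{L^2((0,T)\times I)} \leq \tau^{1/2}\,\|k\|_{L^\infty}\,\sup_{s\in[-\tau,T]}\|u_{j'}(s)-u_\infty(s)\|_{L^2(I)} \longrightarrow 0,\]
so by the Lipschitz assumption \eqref{hyp-f} on $f$, $f(H_{j'}) \to f(H_\infty)$ strongly in $L^2$. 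Subtracting these from the weakly convergent sequence $\gamma_{j'}$ then yields
\[r(\cdot)\,q(\cdot)\,\beta_{j'}(u_{j'}) \rightharpoonup g_\infty := \gamma_\infty + (a+bu_\infty) - f(H_\infty) \quad\text{weakly in } L^2((0,T)\times I).\]

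The crucial step is the identification of this weak limit. Because $\beta_{j'}(u_{j'})$ is uniformly bounded by $\max(|a_i|,|a_f|)$, I would extract a further subsequence (still indexed by $j'$) such that $\beta_{j'}(u_{j'}) \rightharpoonup^{*} \bar\beta$ in $L^\infty((0,T)\times I)$, with $a_i\leq \bar\beta\leq a_f$ almost everywhere. Testing this weak-$\star$ convergence against $\phi=r(t)q(x)\psi(t,x)$ for arbitrary $\psi\in L^2$ identifies $g_\infty=r(t)q(x)\bar\beta(t,x)$ a.e. A further extraction delivers $u_{j'}(t,x)\to u_\infty(t,x)$ pointwise a.e. on $(0,T)\times I$. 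On $\{u_\infty>\bar u\}$, for a.e.\ $(t,x)$ there is $j_0$ with $u_{j'}(t,x)>\bar u+1/j'$ for all $j'\geq j_0$, so $\beta_{j'}(u_{j'}(t,x))=a_f$ eventually; by dominated convergence this pointwise limit coincides with the weak-$\star$ limit, hence $\bar\beta=a_f$ a.e.\ on this set. Symmetrically $\bar\beta=a_i$ a.e.\ on $\{u_\infty<\bar u\}$, and on the ice-line set $\{u_\infty=\bar u\}$ only the containment $\bar\beta\in[a_i,a_f]$ is available. Reading off the three cases gives $\bar\beta(t,x)\in\beta(u_\infty(t,x))$ a.e., and therefore
\[g_\infty(t,x) = r(t)q(x)\bar\beta(t,x) \in r(t)q(x)\beta(u_\infty(t,x)),\]
which is precisely \eqref{lader?}. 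Together with Lemma \ref{lem-pbm-limite}, this shows that $u_\infty$ is a mild solution of \eqref{stab-eq-u-B}.

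The main obstacle is the identification of $\bar\beta$ on the ice-line $\{u_\infty=\bar u\}$, where no pointwise limit can be extracted. The resolution is conceptual rather than technical: the multivalued definition $\beta(\bar u)=[a_i,a_f]$ is tailored precisely to absorb any possible weak-$\star$ limit of monotone approximations, so the uniform two-sided bound on $\beta_{j'}(u_{j'})$ is all that is needed there. A minor technical care point is the double extraction (strong convergence of $u_{j'}$ producing an a.e.\ limit, and weak-$\star$ extraction for $\beta_{j'}(u_{j'})$); this is harmless since uniqueness of the identified weak limit $g_\infty$ ensures that the original sequence $\gamma_{j'}$, not just the subsequence, satisfies the required inclusion.
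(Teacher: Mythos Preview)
Your argument is correct and follows the same strategy as the paper: strong convergence of the linear and memory terms, weak convergence of the coalbedo contribution, and pointwise identification of the limit using a.e.\ convergence of $u_{j'}$ together with dominated convergence on the sets $\{u_\infty>\bar u\}$ and $\{u_\infty<\bar u\}$. The only cosmetic difference is that the paper localizes to the sets $Q_\kappa=\{|r(t)q(x)|\ge\kappa\}$ in order to divide by $r(t)q(x)$ and work directly with a function $\mathcal B$ playing the role of your $\bar\beta$, then treats the set $\{r(t)q(x)=0\}$ separately; your use of weak-$\star$ compactness in $L^\infty$ for $\beta_{j'}(u_{j'})$ handles both cases at once and is slightly cleaner.
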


Finally, note that using the Cantor diagonal process, we can extract subsequences that converge in the same way in all compact subsets of $[0,+\infty)$, hence $u_\infty$ is well defined on $[0,+\infty)$.


\subsection{Proof of Lemma \ref{lem-estim-uj}.} \hfill

First note that from Theorem \ref{maximum}, we already know that the family $(u_j)_j$ belongs to $L^\infty ([0,+\infty) \times (-1,1))$ and is uniformly bounded in this space.

To prove that the sequence $(u_j)_j$ is relatively compact in $C([0,T];L^2 (I))$, we are going to apply the Ascoli-Arzela theorem (we refer, e.g., to \cite{vrabie1987compactness} Theorem 1.3.1 p. 10): we have to prove that

\begin{itemize}
\item $(u_j)_j$ is equicontinuous, that is: 
$$ \sup _j \sup _{[0,T]} \Vert u_j (t+h) - u_j(t)  \Vert _{L^2(I)} \to 0 \quad \text{ as } h\to 0 ,$$
\item and that, for all $t\in [0,T]$, the set of traces $\{u_j (t), j\geq 1 \}$ is relatively compact in $L^2 (I)$. 
\end{itemize}

The result on the set of traces $\{u_j (t), j\geq 1 \}$ follows from a regularity result: we know from \eqref{reg-uj} that
$u_j (t) \in V$. Moreover, it follows from the proof of Proposition \ref{prop-local existence} that there is some $M^*$
independent of $j$ such that
$$ \sup _{t\in [0,T]} \Vert u_j (t) \Vert _V  \leq M^* .$$
Since the injection of $V$ into $L^2 (I)$ is compact, we obtain that the set of traces $\{u_j (t), j\geq 1 \}$ is relatively compact in $L^2 (I)$.

Next, from the problem satisfied by $u_j$, we have the following integral representation formula
$$ u_j (t) = e^{tA} u_0 (0) + \int _0 ^t e^{t-s)A} \gamma _j (s) \, ds ,$$
hence (if $h>0$)
\begin{multline*}
u_j (t+h) - u_j(t) 
\\
= \Bigl( e^{(t+h)A} u_0 (0) - e^{tA} u_0 (0) \Bigr) + \Bigl( \int _0 ^{t+h} e^{(t+h-s)A} \gamma _j (s) \, ds
- \int _0 ^t e^{(t-s)A} \gamma _j (s) \, ds \Bigr)
\\
= \Bigl( (e^{(t+h)A} - e^{tA}) u_0 (0) \Bigr) 
+ \Bigl( \int _t ^{t+h} e^{(t+h-s)A} \gamma _j (s) \, ds \Bigr)
\\
+  \Bigl( \int _0 ^{t} ( e^{(t+h-s)A} -e^{(t-s)A}) \gamma _j (s) \, ds \Bigr) .
\end{multline*}
We estimate these three terms:
\begin{itemize}
\item first, by usual estimates
$$ \Bigl \Vert e^{(t+h)A} u_0 (0) - e^{tA} u_0 (0) \Bigr \Vert _{L^2(I)}
\leq c \vert h \vert \Vert u_0 (0) \Vert _{D(A)} ;$$

\item next,
$$ \Bigl \Vert \int _t ^{t+h} e^{(t+h-s)A} \gamma _j (s) \, ds \Bigr \Vert 
\leq \int _t ^{t+h} \Vert \gamma _j (s)\Vert _{L^2(I)}  \, ds \leq c \vert h \vert ;$$

\item finally, as we did previously, 
\begin{multline*}
\Bigl \Vert \int _0 ^{t} ( e^{(t+h-s)A} -e^{(t-s)A}) \gamma _j (s) \, ds \Bigr \Vert _{L^2(I)}
= \Bigl \Vert (e^{hA} - Id) \int _0 ^{t} e^{(t-s)A} \gamma _j (s) \, ds \Bigr \Vert _{L^2(I)}
\\
= \Bigl \Vert \Bigl( \int _0 ^h A e^{\sigma A} \, d\sigma \Bigr) \Bigl( \int _0 ^{t} e^{(t-s)A} \gamma _j (s) \, ds \Bigr) \Bigr \Vert _{L^2(I)}
\\
= \Bigl \Vert \Bigl( \int _0 ^h (-A)^{1/2} e^{\sigma A} \, d\sigma \Bigr) \Bigl( \int _0 ^{t} (-A)^{1/2} e^{(t-s)A} \gamma _j (s) \, ds \Bigr) \Bigr \Vert _{L^2(I)}
\\
\leq \Bigl( \int _0 ^h ||| (-A)^{1/2} e^{\sigma A} ||| _{\mathcal L (L^2(I))} \, d\sigma \Bigr)
\Bigl( \int _0 ^{t} ||| (-A)^{1/2} e^{(t-s)A} ||| _{\mathcal L (L^2(I))} \Vert \gamma _j (s) \Vert _{L^2(I)} \, ds \Bigr)
\\
\leq c \Bigl( \int _0 ^h \frac{1}{\sqrt{\sigma}} \, d\sigma \Bigr)
\Bigl( \int _0 ^{t} \frac{1}{\sqrt{t-s}} \, ds \Bigr)
\leq c' \sqrt{h} .
\end{multline*}
\end{itemize} 
These three estimates prove that
$$ \sup _j \sup _{[0,T]} \Vert u_j (t+h) - u_j(t)  \Vert _{L^2(I)} = O (\sqrt{\vert h \vert })
\quad \text{ as } h \to 0 .$$ 
Hence the family $(u_j)_j$ is equicontinuous in $C([0,T];L^2 (I))$. Therefore the Ascoli-Arzela theorem 
allows us to say that the family $(u_j)_j$ is relatively compact in $C([0,T];L^2 (I))$. \qed


\subsection{Proof of Lemma \ref{lem-estim-gammaj}.} \hfill

Since the family $(u_j)_j$ is uniformly bounded in $L^\infty ((0,T),\times (-1,1))$, we deduce that the same property holds true pour the family $(\gamma _j)_j$, hence also in the space $L^2 (0,T;L^2 (I))$. Hence 
the family $(\gamma_j)_j$ is weakly relatively compact in $L^2 (0,T;L^2 (I))$. \qed

\subsection{Proof of Lemma \ref{lem-pbm-limite}.} \hfill

We start from the integral formula
\begin{equation}
\label{form-uj}
u_j (t) = e^{tA} u_0 (0) + \int _0 ^t e^{(t-s)A} \gamma _j (s) \, ds .
\end{equation}
Choose $\xi \in L^2(I)$, and fix $t \in [0,T]$. Then
\begin{multline*}
\langle \xi, \int _0 ^t e^{(t-s)A} \gamma _j (s) \, ds \rangle _{ L^2(I)}
= \int _0 ^t \langle \xi,  e^{(t-s)A} \gamma _j (s)  \rangle _{ L^2(I)} \, ds
\\
= \int _0 ^t \langle e^{(t-s)A} \xi,  \gamma _j (s)  \rangle _{ L^2(I)} \, ds
= \langle z, \gamma _j \rangle _{L^2 (0,T;L^2(I))} 
\end{multline*}
where $z \in L^2 (0,T;L^2(I))$ is defined by
$$  z(s ) := 
\begin{cases} 
e^{(t-s)A} \xi , \quad 0 \leq s \leq t , \\
0 , \quad t \leq s \leq T .
\end{cases} $$
Since $\gamma _{j'} \rightharpoonup \gamma _\infty$ in $L^2 (0,T;L^2 (I))$, we obtain that
$$ \langle z, \gamma _{j'} \rangle _{L^2 (0,T;L^2(I))} \to \langle z, \gamma _\infty \rangle _{L^2 (0,T;L^2(I))}
\quad \text{ as } j' \to \infty .$$
But
\begin{multline*}
\langle z, \gamma _\infty \rangle _{L^2 (0,T;L^2(I))}
= \int _0 ^t \langle z(s) ,  \gamma _\infty (s)  \rangle _{ L^2(I)} \, ds
= \int _0 ^t \langle e^{(t-s)A} \xi,  \gamma _\infty (s)  \rangle _{ L^2(I)} \, ds
\\
= \int _0 ^t \langle \xi,  e^{(t-s)A} \gamma _\infty (s)  \rangle _{ L^2(I)} \, ds
= \langle \xi, \int _0 ^t e^{(t-s)A} \gamma _\infty (s) \, ds \rangle _{ L^2(I)} .
\end{multline*}
From the fact that $u_{j'} \to u_\infty$ in $C([0,T];L^2 (I))$, we deduce from \eqref{form-uj} that
$$  u_\infty (t) = e^{tA} u_0 (0) + \int _0 ^t e^{(t-s)A} \gamma _\infty (s) \, ds . $$
Finally, since $\gamma _\infty \in L^2(0,T;L^2(I))$, and $u_0 (0) \in D(A)$, $u_\infty$ has the regularity claimed in Lemma 
\ref{lem-pbm-limite}. \qed


\subsection{Proof of Lemma \ref{lem-2nd-membre}.} \hfill

It remains to identify the weak limit $\gamma _\infty$. In order to do this, fix $\kappa >0$ and
let us introduce 
\begin{equation}
\label{subset}
 Q _\kappa := \{(t,x) \in (0,T) \times (-1,1) \, | \ \vert r(t) q(x) \vert \geq \kappa \} ,
\end{equation}
and then we define on $Q _\kappa$ the function
\begin{equation}
\label{coalbed}
\mathcal B (t,x ) := \frac{1}{r(t) q(x)} \Bigl( \gamma _\infty (t,x) + (a+b u_\infty) - f(\int _{-\tau} ^{0} k(s,x) u_\infty (t+s,x) \, ds ) \Bigr) .
\end{equation}
Of course this definition is motivated by the fact that, also on $ Q _\kappa$, we have
$$ \beta _j (u_j (t,x)) =  \frac{1}{r(t) q(x)} \Bigl( \gamma _j (t,x) + (a+b u_j) - f(\int _{-\tau} ^{0} k(s,x) u_j (t+s,x) \, ds ) \Bigr) .$$
Hence we immediately have that
$$ \beta _{j'} (u_{j'}) \rightharpoonup \mathcal B \quad \text{in } L^2 (Q _\kappa) \text{ as } j' \to \infty .$$
This already tells us that
$$ a_i \leq \mathcal B (t,x) \leq a_f \quad a.e. \ (t,x) \in Q _\kappa .$$
Indeed, choose $E$ any measurable part of $Q _\kappa$ and $\chi _E$ its characteristic function: then
$$ \iint _E (\mathcal B - a_i ) = \iint _{ Q _\kappa} (\mathcal B - a_i ) \chi _E 
= \lim _{j'\to \infty} \iint _{ Q _\kappa}  (\beta _{j'} (u_{j'}) -a_i) \chi _E  .$$
Since $ \beta _{j'} (u_{j'}) - a_i \geq 0$, the last quantity is nonegative, and hence
$$ \iint _E (\mathcal B - a_i ) \geq 0 .$$
Since this holds true for all $E$, we obtain that $\mathcal B - a_i \geq 0$ on $Q_\kappa$.
In the same way, $ \mathcal B - a_f \leq 0$ on $Q_\kappa$.

Now we conclude by proving that
$$ \begin{cases} 
(t,x) \in \tilde Q _\kappa, \\
u_\infty (t,x) < \bar u 
\end{cases} \quad \implies \quad \mathcal B (t,x) = a_i .$$
For $\eta >0$, we introduce 
$$ D _{\kappa,\eta} := \{ (t,x) \in Q _\kappa \ | \  u_\infty (t,x) \leq \bar u - \eta \} .$$
Up to some subsequence, we can assume that
$$ u_{j'} \to u_\infty \quad a.e. \ (t,x) \in (0,T) \times (-1,1) .$$
If $(t,x) \in D _{\kappa,\eta}$, there exists $j'(t,x)$ large enough such that
$$ \forall j' \geq j'(t,x), \quad u_{j'} (t,x) \leq \bar u - \frac{\eta}{2}. $$
And then, the construction of $\beta _{j'}$ implies that, if $j'$ is large enough, we have
$$ \beta _{j'} (u_{j'} (t,x)) = a_i . $$
Hence
$$  \beta _{j'} (u_{j'}) \to a_i \quad a.e. \ (t,x) \in D _{\kappa,\eta} .$$
Now, consider $\psi \in L^2 (Q _{\kappa})$. We have
$$\iint _{Q _{\kappa}} ( \beta _{j'} (u_{j'})  - \mathcal B) \psi \to 0 \quad \text{ as } j' \to + \infty .$$
Moreover, if $\psi $ is supported in $D_{\kappa,\eta}$, then, since $\beta _{j'} (u_{j'}) \to a_i$ a.e. $D_{\kappa,\eta}$, we deduce from the using the dominated pointwise convergence theorem that
$$ \iint _{Q _{\kappa}} ( \beta _{j'} (u_{j'})  - \mathcal B) \psi  \to \iint _{Q _{\kappa}}  (a_i- \mathcal B) \psi 
\quad \text{ as } j' \to + \infty .$$
Hence
$$  \iint _{Q _{\kappa}}  (a_i- \mathcal B) \psi = 0$$
and this holds true for all $\psi$ supported in $D_{\kappa,\eta}$, therefore
$$ a_i = \mathcal B \text{ on } D_{\kappa,\eta} .$$
Letting $\eta \to 0$, we obtain that 
$$ \mathcal B = a_i \quad \text{ on } \{ (t,x) \in Q _{\kappa} \ | \  u_\infty (t,x) < \bar u \} .  $$
We can proceed in the same way to prove that
$$ \mathcal B = a_f \quad \text{ on } \{ (t,x) \in Q _{\kappa} \ | \  u_\infty (t,x) > \bar u \} .  $$
And finally, letting $\kappa \to 0$, we obtain that, if $r(t)q(x) \neq 0$, we have
\begin{multline*}
\frac{1}{r(t) q(x)} \Bigl( \gamma _\infty (t,x) + (a+b u_\infty) - f(\int _{-\tau} ^{0} k(s,x) u_\infty (t+s,x) \, ds ) \Bigr)
\\
\begin{cases} 
= a_i \quad \text{ if } u_\infty (t,x) < \bar u ,\\
= a_f \quad \text{ if } u_\infty (t,x) > \bar u, \\
\in [a_i, a_f] \quad \text{ in any case} ,
\end{cases}
\end{multline*}
Of course, if $r(t) q(x)= 0$, then
$$ \gamma _{j'} (t,x) = - (a+b u_{j'}(t,x)) + f(H_{j'}),$$
and we deduce that
$$ \gamma _\infty (t,x) = - (a+b u_\infty (t,x)) + f(\int _{-\tau} ^{0} k(s,x) u_\infty (t+s,x) \, ds ) .$$
Hence
\begin{multline*}
\gamma _\infty (t) (x) = r(t) q(x) \mathcal B (t,x) - (a+b u_\infty (t,x) ) + f(H_\infty (t,x)) 
\\
\in r(t) q(x) \mathcal \beta (u_\infty (t,x)) - (a+b u_\infty (t,x)) + f(H_\infty (t,x))
\text{ a.e.} (t,x) \in (0,T) \times I 
\end{multline*}
hence the set inclusion \eqref{lader?} is satisfied. \qed


\bigskip

\noindent {\bf Acknowledgements.} The authors wish to thank K. Fraedrich for a very interesting discussion on the question of Energy Balance Models with Memory.

\end{document}